\theoremstyle{plain}
\newtheorem{thm}{Theorem}[section]
\newtheorem{lem}[thm]{Lemma}
\newtheorem{cor}[thm]{Corollary}
\newtheorem{prop}[thm]{Proposition}
\newtheorem{mainthm}{Theorem}
\theoremstyle{definition}
\newtheorem{defn}[thm]{Definition}
\newtheorem{rem}[thm]{Remark}
\newtheorem{example}[thm]{Example}
\newcommand{\Z}{\mathbb{Z}}
\newcommand{\Ib}{\mathbf{I}}
\newcommand{\Sb}{\mathbf{S}}
\newcommand{\bb}{\mathbf{b}}
\newcommand{\Cc}{\mathcal{C}}
\newcommand{\Fc}{\mathcal{F}}
\newcommand{\cb}{\mathbf{c}}
\newcommand{\C}{\mathbb{C}}
\newcommand{\onto}{\twoheadrightarrow}
\newcommand{\cat}{\textsc{CAT}}
\newcommand{\NC}{\textsc{NC}}
\newcommand{\mult}{\textsc{Mult}}
\newcommand{\chains}{\textsc{Chains}}
\newcommand{\sym}{\textsc{Sym}}
\newcommand{\braid}{\textsc{Braid}}
\newcommand{\mon}{\textsc{Mon}}
\newcommand{\poly}{\textsc{Poly}}
\newcommand{\upper}{{\uparrow}}
\newcommand{\low}{{\downarrow}}
\newcommand{\bool}{\textsc{Bool}}
\newcommand{\rank}{\textrm{rk}}
\newcommand{\comp}{\textsc{Comp}}
\newcommand{\art}{\textsc{Art}}
\newcommand{\fact}{\textsc{Fact}}
\newcommand{\wcomp}{\textsc{WComp}}
\newcommand{\wfact}{\textsc{WFact}}
\newcommand{\bu}{\mathbf{u}}
\newcommand{\bv}{\mathbf{v}}
\newcommand{\bw}{\mathbf{w}}
\newcommand{\ba}{\textbf{a}}
\newcommand{\bx}{\mathbf{x}}
\newcommand{\by}{\textbf{y}}
\newcommand{\bz}{\textbf{z}}
\newcommand{\bp}{\textbf{p}}
\newcommand{\bs}{\mathbf{s}}
\newcommand{\NCS}[2]{\NC_{#1}^{#2}(\Sb)}
\newcommand{\closedint}{
  \begin{tikzpicture}[baseline]
    \draw[thick] (0,.1)--(.4,.1);
    \filldraw[color=black,fill=black!70!white] (0,.1) circle (.3mm)
    (.4,.1) circle (.3mm);
  \end{tikzpicture}
}
\tikzstyle{RowNode} = [
\tikzstyle{BlueLine}=[line width=0.3mm,color=blue,text=black]
\tikzstyle{BluePoly}=[BlueLine,fill=blue!20]
\tikzstyle{RedLine}=[line width=0.3mm,color=red,text=black]
\tikzstyle{RedPoly}=[RedLine,fill=red!20]
\tikzstyle{GreenLine}=[thick,color=black!30!green,text=black]
\tikzstyle{GreenPoly}=[thick,color=green!50!black,fill=green!30,join=bevel]
\tikzstyle{PurpleLine}=[line width=0.3mm,color=red!60!blue!80,text=black]
\tikzstyle{PurplePoly}=[PurpleLine,fill=red!40!blue!50]
\tikzstyle{OrangeLine}=[thick,color=orange]
\tikzstyle{GrayLine}=[thick,color=black!50!gray]
\tikzstyle{GrayPoly}=[GrayLine,fill=gray!20]
\tikzstyle{dot}=[shape=circle,draw,color=black,fill=black,inner sep=1.5pt]
\tikzstyle{opendot}=[shape=circle,draw,color=black,fill=white,inner sep=1.5pt]
\tikzstyle{bigdot}=[dot,inner sep=2pt]
\tikzstyle{littledot}=[dot,inner sep=0.75pt]
\tikzstyle{littleopendot}=[opendot,inner sep=0.75pt]
\tikzstyle{smalldot}=[dot,inner sep=1pt]
\tikzstyle{disk}=[thick,shape=circle,draw,color=black,fill=yellow!10]
\tikzstyle{lightdisk}=[thick,shape=circle,draw,color=black!40,fill=yellow!20]
\tikzstyle{plate}=[thick,shape=rectangle,draw,color=black,fill=yellow!10,rounded
\newcounter{joncomments}
\newcounter{michaelcomments}
\begin{document}

\title[Continuous Noncrossing Partitions]
      {Continuous Noncrossing Partitions and\\ Weighted Circular Factorizations}
\author{Michael Dougherty} 
    \email{doughemj@lafayette.edu}
    \address{Department of Mathematics, Lafayette College,
    Easton, PA 18042}
\author{Jon McCammond}
    \email{jon.mccammond@math.ucsb.edu}
    \address{Department of Mathematics, UC Santa Barbara, 
    Santa Barbara, CA 93106} 
\date{\today}

\begin{abstract}
  This article examines noncrossing partitions of the unit circle in
  the complex plane; we call these \emph{continuous noncrossing
  partitions}.  More precisely, we focus on the \emph{degree-$d$}
  continuous noncrossing partitions where unit complex numbers in the
  same block have identical $d$-th powers.  We prove that the
  degree-$d$ continuous noncrossing partitions form a topological
  poset whose uncountable set of elements can be indexed by
  equivalence classes of objects we call \emph{weighted linear
  factorizations} of factors of a $d$-cycle.  Moreover, the maximal
  elements in this poset form a subspace homeomorphic to the dual
  Garside classifying space for the $d$-strand braid group.

  The degree-$d$ continuous noncrossing partitions of the unit circle
  are a special case of a more general construction. For every choice
  of Coxeter element $\cb$ in any Coxeter group $W$ we define a
  topological poset of equivalence classes of weighted linear
  factorizations of factors of $\cb$ in $W$ whose elements we call
  \emph{continuous $\cb$-noncrossing partitions}.  The maximal
  elements in this poset form a subspace homeomorphic to the
  one-vertex complex whose fundamental group is the corresponding dual
  Artin group.
\end{abstract}

\maketitle

\section*{Introduction}

Let $\NCS{}{}$ denote the set of all partitions of the unit circle
$\Sb \subset \C$ for which the convex hulls of the blocks are pairwise
disjoint. This set is partially ordered by refinement, and we refer to
its elements as \emph{continuous noncrossing partitions}. The full
continuous noncrossing partition poset $\NCS{}{}$ is quite
complicated. For example, a geodesic lamination of a hyperbolic
surface lifts to a lamination of the disk model of the hyperbolic
plane, and this induces a continuous noncrossing partition of the
circle whose blocks are the asymptotic ends of the leaves.  In this
article we focus on the simpler subposet $\NCS{d}{}$ of
\emph{degree-$d$ continuous noncrossing partitions}, i.e. those where
unit complex numbers in the same block have the same image under the
map $z\mapsto z^d$.  See Figure~\ref{fig:ncs-example}.  Our first main
theorem gives an algebraic construction of $\NCS{d}{}$.

\begin{figure}
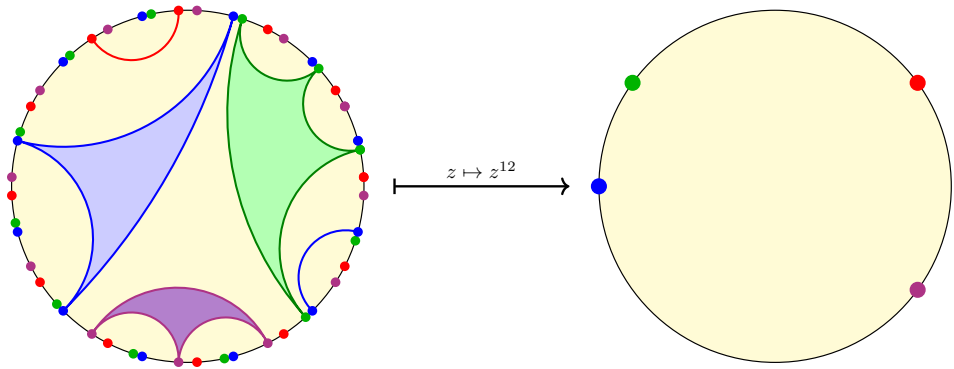

  \centering
  \includestandalone[width = \textwidth]{fig-ncs-example}
  \caption{The degree-$12$ continuous noncrossing partition shown on
    the left has five non-trivial blocks, and these are sent under the
    map $z \mapsto z^{12}$ to the four points shown on the right.  The
    $48$ points shown on the left are the preimages of these four
    points on the right.  All other points in $\Sb$ belong to
    singleton blocks.}
  \label{fig:ncs-example}
\end{figure}

\begin{mainthm}[Theorem~\ref{thm:degree-d-invariant-isom}]
  \label{mainthm:nc-fact-isomorphism}
  The poset $\NCS{d}{}$ of degree-$d$ continuous noncrossing
  partitions is isomorphic to the topological poset
  $\Fc(\sym_d,\delta,\Sb)$ defined using weighted linear
  factorizations of noncrossing permutations in $\sym_d$.
\end{mainthm}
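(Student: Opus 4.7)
The plan is to construct an explicit bijection $\Phi \colon \NCS{d}{} \to \Fc(\sym_d,\delta,\Sb)$ and then verify that $\Phi$ and its inverse preserve both the partial order and the topology. The construction is driven by the following observation: in any $P \in \NCS{d}{}$, every non-trivial block $B$ lies in a single fiber of the map $z\mapsto z^d$, so $B$ is a subset of the $d$-th roots of some common value $\zeta \in \Sb$. Such a subset naturally determines a cycle in $\sym_d$ (label the $d$ roots of $\zeta$ cyclically and read off the cyclic permutation induced by the points of $B$) together with a weight in $\Sb$ (the position $\zeta$, or equivalently an angular position of the convex hull of $B$).

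First I would define $\Phi(P)$ by reading around $\Sb$ counterclockwise from a chosen basepoint, listing the non-trivial blocks of $P$ in the order in which they are first encountered, and recording the resulting sequence of (cycle, weight) pairs as a weighted linear factorization. The pairwise noncrossing condition on the convex hulls of blocks translates, after multiplying the cycles in this linear order, into the condition that the product is a noncrossing permutation, i.e.\ a factor of the $d$-cycle $\delta$ in the absolute order on $\sym_d$. The equivalence relation on $\Fc(\sym_d,\delta,\Sb)$ is designed exactly to absorb the residual ambiguities in this construction: swapping adjacent factors whose weights lie in disjoint arcs of $\Sb$ (corresponding to disjoint commuting cycles that can be reordered without affecting the partition) and inserting or deleting trivial factors. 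After verifying that $\Phi$ respects this equivalence, I would construct the inverse $\Psi$ by decoding each factor $(c_i,\zeta_i)$ into the non-trivial block consisting of those $d$-th roots of $\zeta_i$ indexed by the support of $c_i$, arranged so that the rotation in the block matches the cycle $c_i$.

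To verify $\Psi \circ \Phi = \mathrm{id}$ and $\Phi \circ \Psi = \mathrm{id}$, the essential combinatorial input is that a noncrossing partition is uniquely recovered from its sequence of cycles together with the angular positions of those cycles, which is exactly the classical bijection between noncrossing permutations and noncrossing partitions adapted to the continuous setting. Order-preservation in both directions would then follow from comparing the refinement relation on $\NCS{d}{}$, in which blocks split into sub-blocks lying in the same fiber, with the covering relation on weighted factorizations, in which a single factor is replaced by a pair of factors whose product is the original factor and whose weights agree. Both relations reduce to factoring a single cycle in $\sym_d$ into a noncrossing product of two smaller cycles, so they match on the nose.

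The main obstacle, and the part I expect to require the most care, is matching the topology on $\NCS{d}{}$ with the quotient topology on $\Fc(\sym_d,\delta,\Sb)$. One must show that as a non-trivial block slides continuously around $\Sb$, splits into two blocks that separate on different arcs, or shrinks to a singleton, the corresponding weighted factorization varies continuously in the appropriate topology — including at the boundary strata where the combinatorial type of the factorization changes (e.g.\ when the weights of two distinct factors collide, forcing a prescribed multiplication order). Handling these boundary transitions, and verifying that the equivalence relation is precisely strong enough to glue the strata into the correct quotient, will be the substantive topological content of the proof.
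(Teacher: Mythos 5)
Your overall strategy---pair each non-trivial block with a permutation via its cyclic structure and with a weight via its fiber under $z\mapsto z^d$, then invoke Biane's bijection to get injectivity and surjectivity---is essentially the paper's construction run in the opposite direction (the paper defines $\Psi$ from multisets of permutations to partitions, using $\psi(x_j)$ at each weight $s_j$). But there is a genuine gap at the crux of the argument. You assert that ``the pairwise noncrossing condition on the convex hulls of blocks translates, after multiplying the cycles in this linear order, into the condition that the product is a noncrossing permutation.'' This equivalence---that disjointness of convex hulls of blocks lying in \emph{different, interleaved} fibers of $z \mapsto z^d$ is equivalent to the length-additivity condition $\ell(x_L)+\ell(x_1)+\cdots+\ell(x_k) = \ell(x_L x_1 \cdots x_k)$---is the entire combinatorial content of the theorem, and it is not obvious: the points of the various fibers alternate around the circle, so whether two hulls from different fibers intersect is a nontrivial function of the pair of permutations involved. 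The paper does not prove this from scratch either; it reduces it to Armstrong's theorem on $k$-shuffle partitions (\cite[Theorem~4.3.5]{armstrong09}, quoted as Theorem~\ref{thm:shuffle-reduced-product}), which says exactly that the shuffle partition determined by $(\pi_1,\ldots,\pi_k)$ is noncrossing iff the reflection lengths of the corresponding permutations add. Without citing or proving that result, your ``translates into'' is an assertion of the theorem, not a proof of it.

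Two smaller points. First, your description of the equivalence relation on $\Fc(\sym_d,\delta,\Sb)$ (``swapping adjacent factors whose weights lie in disjoint arcs \ldots and inserting or deleting trivial factors'') is not the relation in the paper: a weighted circular factorization is a function $\Sb\to\sym_d$, so the weights pin down the linear order of the factors and no swapping is possible; the only identification is wrapping the endpoint $1$ around to $0$ with a conjugation by $\delta$ (Definition~\ref{def:fact-top-poset-circ}). Relatedly, several non-trivial blocks can lie in the \emph{same} fiber, and they must be merged into a single permutation (a product of disjoint cycles) at that one weight---your one-cycle-per-block bookkeeping needs this adjustment. Second, the topological matching you flag as the main obstacle is moot in the paper's formulation: $\NCS{d}{}$ is defined purely as a poset, the theorem claims only a poset isomorphism, and the topology on $\NCS{d}{}$ is then \emph{imported} through the isomorphism rather than being an independent structure that must be shown to agree.
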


The topological poset $\Fc(\sym_d,\delta,\Sb)$ is a special case of a
general construction introduced here.  Since the full definition is
slightly complicated, we sketch the defintion here and record the
details in the body of the article.  First, recall that several recent
studies of affine and hyperbolic Artin groups have been based on
constructions of the following form.  Given a group $G$ with a fixed
conjugacy-closed generating set $X$ and an element $g \in \mon(X)
\subset G$, the submonoid generated by $X$, one can define a bounded
graded poset $P_g=[1,g]$ of reduced factorizations of $g$ over $X$, a
simplicial complex $O_g = \Delta(P_g)$ which is the order complex of
$P_g$, a one-vertex $\Delta$-complex $K_g = q(O_g)$ which is a
cellular quotient of $O_g$, and a group $G_g = \pi_1(K_g)$ which is
the fundamental group of the quotient.  These are the \emph{interval
poset} $P_g$, its \emph{order complex} $O_g$, the \emph{interval
complex} $K_g$ and the \emph{interval group} $G_g$, respectively.  See
\cite{BradyMcCammond-fac-euc-isom, McCammond-dual-euc-art,
  McCammondSulway-euc-art, mccammond-euc-art-survey,
  paolinisalvetti21, haettel-survey, delucchi24}.

When $G$ is the symmetric group $\sym_d$, $X = T$ is the set of all
transpositions in $\sym_d$, and $\delta$ is the $d$-cycle
$(1\ 2\ \cdots\ d)$, the poset $P_\delta$ is the noncrossing partition
lattice $\NC_d$ and the interval complex $K_\delta$ is the \emph{dual
braid complex} with fundamental group $\braid_d$. This space was
introduced by Tom Brady \cite{brady01} and given a piecewise-Euclidean
metric by Brady and the second author \cite{BrMc-10}.  The dual braid
complex is a classifying space for the $d$-strand braid group and it
is conjectured to be locally $\cat(0)$ with its ``orthoscheme''
metric.  This claim has been verified for $d\leq 7$ \cite{BrMc-10,
  haettel-kielak-schwer-16, Jeong_2023}.  A proof that these metric
piecewise Euclidean complexes are nonpositively curved would resolve
the long-standing conjecture that the braid group is a $\cat(0)$
group.  More generally, when $G = W$ is a Coxeter group, $X=T$ is the
set of all reflections in $W$, and $g=\cb$ is a Coxeter element,
(i.e. a product of the reflections in a simple system in some order),
then $P_g$ is the poset of $\cb$-noncrossing partitions, $K_g$ is the
one-vertex complex for the dual Artin group and $G_g$ is the dual
Artin group $\art^\ast(G,g)$.

\begin{table}
  \begin{center}
    \begin{tabular}{|c||c|c|c|}
      \hline
      & Cells & Points & TopPoset\\
      \hline \hline
      General & $\fact(G,g,\Ib)$ & $\wfact(G,g,\Ib)$ &
      $\Fc(G,g,\Ib)$\\  
      \hline
      Coxeter & $\fact(W,\cb,\Ib)$ & $\wfact(W,\cb,\Ib)$ &
      $\Fc(W,\cb,\Ib)$\\ 
      \hline
      Symmetric & $\fact(\sym_d,\delta,\Ib)$ &
      $\wfact(\sym_d,\delta,\Ib)$ & $\Fc(\sym_d,\delta,\Ib)$\\ 
      \hline
      Integer &  $\comp(\Z,n,\Ib)$ & $\wcomp(\Z,n,\Ib)$ &
      $\Cc(\Z,n,\Ib)$\\ 
      \hline
    \end{tabular}
  \end{center}
  \caption{Linear factorizations of $g$ describe the cells of the
    order complex $O_g$ (first column). Weighted linear factorizations
    of $g$ describe the points of $O_g$ (second column).  The disjoint
    union of order complexes $\{O_h \mid h \leq g\}$ form a 
    topological poset with a grading (third column).\label{tbl:linear}}
\end{table}

In this article, we introduce algebraic constructions that index the
cells and the points of the order complex $O_g$ and interval complex
$K_g$.  The cells are indexed by linear and circular factorizations.
The points are indexed by weighted linear and weighted circular
factorizations.  The linear versions refer to the simplicial order
complex $O_g$. In symbols, $\textsc{Cells}(O_g) = \chains(P_g) =
\fact(G,g,\Ib)$, and $\textsc{Points}(O_g) = \textsc{WChains}(P_g) =
\wfact(G,g,\Ib)$.  The circular versions refer to the one-vertex
interval complex $K_g = q(O_g)$.  In symbols, we have
$\textsc{Cells}(K_g) = q(\textsc{Cells}(O_g)) = q(\chains(P_g)) =
q(\fact(G,g,\Ib)) = \fact(G,g,\Sb)$.  See Tables~\ref{tbl:linear}
and~\ref{tbl:circular}. The rows in these tables vary the type of
group under consideration, and in the final row we switch to additive
notation.

\begin{table}
  \begin{center}
    \begin{tabular}{|c||c|c|c|}
      \hline
      & Cells & Points & TopPoset\\
      \hline \hline
      General & $\fact(G,g,\Sb)$ & $\wfact(G,g,\Sb)$ &
      $\Fc(G,g,\Sb)$\\  
      \hline
      Coxeter & $\fact(W,\cb,\Sb)$ & $\wfact(W,\cb,\Sb)$ &
      $\Fc(W,\cb,\Sb)$\\ 
      \hline
      Symmetric & $\fact(\sym_d,\delta,\Sb)$ &
      $\wfact(\sym_d,\delta,\Sb)$ & $\Fc(\sym_d,\delta,\Sb)$\\ 
      \hline
      Integer & $\comp(\Z,n,\Sb)$ & $\wcomp(\Z,n,\Sb)$ &
      $\Cc(\Z,n,\Sb)$\\ 
      \hline
    \end{tabular}
  \end{center}
  \caption{Circular factorizations of $g$ describe the cells of the
    interval complex $K_g$ (first column).  Weighted circular
    factorizations of $g$ describe the points of $K_g$ (second
    column).  The circular topological poset (third column) is a
    quotient of the linear topological poset in the third column of
    Table~\ref{tbl:linear}. \label{tbl:circular}}
\end{table}

Our third construction combines the weighted factorizations of
elements below $g$ into a topological poset.  A \emph{topological
poset}, roughly speaking, is a poset where the elements have a
topology and the order relation is a closed subspace
(Definition~\ref{def:top-poset}).  They were defined by Rade
\v{Z}ivaljevi\'c in \cite{zivaljevic98} and inspired by the work of
Vasiliev in \cite{vasiliev91}. See also \cite{folkman66, bjorner95,
  zivaljevic16}.

One of the motivating examples for the concept is the poset
$\mathcal{L}_n(\mathbb{R})$ of linear subspaces in $\mathbb{R}^n$
partially ordered by inclusion.  It has a rank function that sends
each subspace to its dimension, and can be viewed topologically as the
disjoint union of the Grassmannians $\text{Gr}(i,\mathbb{R}^n)$ with
$i \in \{0,\ldots,n\}$.  A more relevant example here is the poset of
multisets of size at most $n$ in an interval $\Ib$, also ordered by
inclusion.  The elements of rank $k$ are the multisets of size $k$
when counted with multiplicity, and these are in bijection with the
points of a $k$-dimensional simplex, which gives them a natural
topology.  Figure~\ref{fig:top-graded-poset} in
Section~\ref{sec:top-posets} shows this topological graded poset for
$n=3$.  It has a tetrahedron of points in rank~$3$, a triangle of
points in rank~$2$, an interval of points in rank~$1$, and a single
point in rank~$0$.

The elements of the linear topological poset $\Fc(G,g,\Ib)$ are
precisely \emph{weighted circular factorizations} of elements $h$ in
the interval $P_g =[1,g]$, and topologically it is a disjoint union of
the order complexes $\{O_h \mid 1 \leq h \leq g\}$.  The circular
topological poset $\Fc(G,g,\Sb)$ is defined as a quotient of the
linear topological poset $\Fc(G,g,\Ib)$.  See
Definition~\ref{def:circular-factorization} for a precise definition
of the quotient map.  Example~\ref{ex:subfact-delta} shows that the
topology of $\Fc(G,g,\Sb)$ need not be a disjoint union of the
interval complexes $\{K_h \mid 1 \leq h \leq g\}$.  The components of
$\Fc(G,g,\Sb)$ are instead cyclic covers of the interval complexes
$K_h$.  The maximal elements of $\Fc(G,g,\Sb)$ form a copy of the
interval complex $K_g$ without taking a cyclic cover.

\begin{mainthm}[Theorem~\ref{thm:max-elements}]
  \label{mainthm:max-elements}
  Let $P_g$ be an interval poset in a marked group $G$ with order
  complex $O_g$ and interval complex $K_g$.  In the circular
  topological poset $\Fc(G,g,\Sb)$, the subspace of maximal elements
  is homeomorphic to $K_g$.
\end{mainthm}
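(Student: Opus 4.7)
The plan is to locate the subspace $M$ of maximal elements of $\Fc(G,g,\Sb)$ as the quotient image of $O_g$, and then to show that this restricted quotient is exactly the one defining $K_g$.

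\textbf{Step 1. Locating the maximal stratum.} Each weighted linear factorization in $\Fc(G,g,\Ib)$ carries an underlying element $h \in P_g$, namely the product of its factors. Since the circular identifications defining $\Fc(G,g,\Sb)$ only reorganize the factors without altering their product, this assignment descends to the quotient and gives a rank function on $\Fc(G,g,\Sb)$. Because $g$ is the unique maximum of $P_g$, the maximal elements of $\Fc(G,g,\Ib) = \bigsqcup_{h\leq g} O_h$ are precisely the points of the clopen stratum $O_g$, and their image $M$ is the maximal stratum of $\Fc(G,g,\Sb)$. Moreover the preimage of $M$ under the quotient is exactly $O_g$, and since $O_g$ is clopen in $\Fc(G,g,\Ib)$, its image $M$ is clopen in $\Fc(G,g,\Sb)$; consequently the subspace topology on $M$ agrees with the quotient topology inherited from $O_g$.

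\textbf{Step 2. Matching equivalence relations.} It then suffices to show that the restriction of the quotient map $\Fc(G,g,\Ib)\to\Fc(G,g,\Sb)$ to $O_g$ enacts the same identifications as $q\colon O_g \to K_g$. Unwinding Definition~\ref{def:circular-factorization} on the top rank, the identifications amount to cyclically moving an extremal factor around the circle (with the appropriate conjugation forced by the fixed product $g$), which are exactly the cellular identifications defining $K_g$ from $O_g$. The asymmetry noted in the article, that lower ranks produce cyclic covers of the $K_h$ rather than the $K_h$ themselves, arises because a weighted circular factorization of $h < g$ retains the angular position of its complementary arc of weight $1-\sum w_i$ as additional data, whereas at the top rank no such arc is present and this extra data collapses.

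\textbf{Step 3. From bijection to homeomorphism.} Step~2 yields a continuous bijection $K_g \to M$. The order complex $O_g$ is a CW-complex and the cyclic identifications are cellular, so $K_g$ is a Hausdorff CW-complex. Combined with the topological identification in Step~1, this continuous bijection $K_g \to M$ is a homeomorphism.

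\textbf{Main obstacle.} The principal technical content is Step~2: one must verify directly from Definition~\ref{def:circular-factorization} that at the top rank the circular identifications are precisely the cyclic rotations used to build $K_g$, with no additional identifications and no cyclic cover. This is the substance of the top/below asymmetry highlighted in the discussion preceding the theorem, and it requires careful bookkeeping of how the complementary weight is tracked in a weighted circular factorization.
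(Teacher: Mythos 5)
Your proposal follows essentially the same route as the paper: identify the maximal stratum with the image of $O_g=\wfact(G,g,\Ib)$, check that the circular identification of Definition~\ref{def:fact-top-poset-circ} restricted to that stratum coincides with the one in Definition~\ref{def:weighted-fact-circle}, and invoke Proposition~\ref{prop:wfact-interval-complex} to conclude $M\cong\wfact(G,g,\Sb)\cong K_g$. One caveat: your Step~1 justification that the circular identifications ``only reorganize the factors without altering their product'' is false in general --- Remark~\ref{rem:not-Kh-union} gives $[1\ a]\sim[gag^{-1}\ 1]$ with different values of $\rho$ --- so the rank function must be defined via canonical representatives $0^{z_L}s_1^{z_1}\cdots 1^{1}$ as in the paper, not by descending $\rho$. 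The conclusion you need survives, since a short computation shows $\rho(\bu)=g$ if and only if the canonical representative of $[\bu]$ has product $g$ (equivalently, on the top stratum the two end conditions $x_Lx_1\cdots x_kx_R=g$ and $gx_Rg^{-1}x_L=gy_Rg^{-1}y_L$ force agreement), so the preimage of the maximal stratum is exactly $O_g$ and the two equivalence relations agree there, as your Step~2 asserts.
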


When Theorem~\ref{mainthm:max-elements} is applied to a Coxeter
group $W$ with Coxeter element $\cb$, the result is a topological
poset whose maximal elements form the one-vertex complex for the dual
Artin group $\art^\ast(W,\cb)$.

\begin{mainthm}[Corollary~\ref{cor:dual-complex}]
  \label{mainthm:dual-complex}
  Let $\cb \in W$ be a Coxeter element in a Coxeter group generated by
  its reflections.  The maximal elements of the circular topological
  poset $\Fc(W,\cb,\Sb)$ form a subspace homeomorphic to the complex
  $K_\cb$ whose fundamental group is the dual Artin group
  $\art^\ast(W,\cb)$.  In particular, the subspace of maximal elements
  in $\NCS{d}{} \cong \Fc(\sym_d,\delta,\Sb)$ is homeomorphic to the
  dual braid complex $K_\delta$ whose fundamental group is the braid
  group $\braid_d$.
\end{mainthm}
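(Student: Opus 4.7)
The plan is to derive this corollary directly by specializing Theorem~\ref{mainthm:max-elements} to the Coxeter setting and then, for the symmetric group statement, combining that specialization with Theorem~\ref{mainthm:nc-fact-isomorphism}. Since the general result already identifies the subspace of maximal elements of $\Fc(G,g,\Sb)$ with the interval complex $K_g$ for an arbitrary marked group, the main work is just to verify that a Coxeter group $W$ generated by its reflections, equipped with a Coxeter element $\cb$, is a valid instance of the input data, and that the resulting interval complex $K_\cb$ is the complex whose fundamental group has already been named the dual Artin group in the literature.

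First I would record that the pair $(W,T)$ with $T$ the set of all reflections of $W$ is a marked group in the sense of the general construction: $T$ is conjugacy-closed, generates $W$ by hypothesis, and $\cb$ lies in $\mon(T)$ because any expression of $\cb$ as a product of simple reflections is already a $T$-word. The interval $P_\cb = [1,\cb]$ taken in the absolute order on $W$ is then the poset of $\cb$-noncrossing partitions, its order complex $O_\cb$ is the standard simplicial complex used throughout the dual Artin literature, and the interval complex $K_\cb = q(O_\cb)$ is by definition the one-vertex $\Delta$-complex whose fundamental group is $\art^\ast(W,\cb)$. Applying Theorem~\ref{mainthm:max-elements} with $G = W$ and $g = \cb$ then yields that the subspace of maximal elements of $\Fc(W,\cb,\Sb)$ is homeomorphic to $K_\cb$, giving the first assertion of the corollary.

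For the ``in particular'' statement I would instantiate the Coxeter case at $W = \sym_d$ and $\cb = \delta = (1\ 2\ \cdots\ d)$, with $T$ the set of all transpositions. Here the general machinery reproduces Brady's dual presentation: $P_\delta = \NC_d$, and $K_\delta$ is the dual braid complex, whose fundamental group is the braid group $\braid_d$, as recalled in the introduction. Theorem~\ref{mainthm:nc-fact-isomorphism} provides a poset isomorphism $\NCS{d}{} \cong \Fc(\sym_d,\delta,\Sb)$, and I would need to note only that this isomorphism is a homeomorphism of topological posets, so that it restricts to a homeomorphism on the subspaces of maximal elements. Composing with the Coxeter specialization above then identifies the maximal elements of $\NCS{d}{}$ with $K_\delta$.

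The only potential subtlety, and therefore the step I would treat with the most care, is checking that the isomorphism provided by Theorem~\ref{mainthm:nc-fact-isomorphism} is compatible with the topologies on both sides on the nose, not merely as an order isomorphism of the underlying sets; if that compatibility is built into the statement of Theorem~\ref{mainthm:degree-d-invariant-isom} (as the phrase ``isomorphic to the topological poset'' in the main theorem suggests), then the corollary follows immediately and no further argument is required. Otherwise I would verify it by unwinding the definitions of the quotient map and the weighted factorization topology at the level of maximal chains.
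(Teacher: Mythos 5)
Your proposal is correct and matches the paper's argument: the corollary is obtained by specializing Theorem~\ref{mainthm:max-elements} to the marked group $(W,T)$ with $g=\cb$, and, for the braid-group case, by invoking Theorem~\ref{mainthm:nc-fact-isomorphism} to replace $\Fc(\sym_d,\delta,\Sb)$ with $\NCS{d}{}$. The one subtlety you flag is moot in the paper's setup, since $\NCS{d}{}$ receives its topology precisely by transport along the isomorphism $\Psi$ of Theorem~\ref{thm:degree-d-invariant-isom}, so the restriction to maximal elements is automatically a homeomorphism.
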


The ideals and filters in the poset $\Fc(G,g,\Ib)$ (and in its
circular counterpart $\Fc(G,g,\Sb)$) are quite distinct.  Let $\bu \in
\Fc(G,g,\Ib)$ be a weighted linear factorization of $h \in P_g =
   [1,g]$ and let $[\bu]$ be its equivalence class in $\Fc(G,g,\Sb)$.
   The lower set (ideal) $\low([\bu])$ of all elements below $[\bu]$
   in the partial order on $\Fc(G,g,\Sb)$ is isomorphic to a product
   of intervals $[1,x_1] \times \cdots \times [1,x_k]$ for some
   $x_1,\ldots,x_k \in P_g$ (Remark~\ref{rem:lower-sets}). In
   particular, the lower set $\low([\bu])$ is discrete when $G$ is a
   discrete group and finite when $G$ is a finite group, even though
   $\Fc(G,g,\Sb)$ itself is uncountable.  The upper set
   $\upper([\bu])$ is more complicated: it is a circular topological
   poset for an element in $[1,g]$.

\begin{mainthm}[Corollary~\ref{cor:circle-upper-sets}]
  \label{mainthm:upper-sets}
  Let $\bu$ be a weighted linear factorization of $h \in P_g$ with
  equivalence class $[\bu] \in \Fc(G,g,\Sb)$. Then the upper set
  $\upper([\bu])$ in $\Fc(G,g,\Sb)$ is isomorphic to $\Fc(G,h',\Sb)$
  where $h\cdot h' = g$. Consequently, the maximal elements of
  $\upper([\bu])$ form a subspace which is isometric to the interval
  complex $K_{h'}$ inside the interval complex $K_g$.
\end{mainthm}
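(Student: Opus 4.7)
The plan is to construct an explicit poset isomorphism $\Phi\colon \upper([\bu]) \to \Fc(G,h',\Sb)$ by ``peeling off'' the data of $\bu$ from any refinement above it. Fix a linear representative $\bu$ of the equivalence class $[\bu]$, which is a weighted linear factorization of $h$. A weighted circular factorization $[\bv] \in \Fc(G,g,\Sb)$ lies above $[\bu]$ precisely when $[\bv]$ admits a representative $\bv$ of the form $\bu \cdot \bw$, where $\bw$ is a weighted linear factorization of the complementary element $h'$ determined by $h \cdot h' = g$ in the interval $P_g$. The map $\Phi$ sends $[\bv]$ to the equivalence class $[\bw]$ in $\Fc(G,h',\Sb)$.

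First I would verify that $\Phi$ is well defined. Two representatives $\bv_1, \bv_2$ of $[\bv]$ differ by the circular shift relation that defines $\Fc(G,g,\Sb)$, so I need to show that once we insist on a representative beginning with $\bu$, any ambiguity is absorbed by the circular relation on $\Fc(G,h',\Sb)$. This uses the fact that the weight assigned to the ``seam'' between $\bu$ and $\bw$ can be split into a part belonging to $h$ and a part belonging to $h'$, which is exactly how the quotient defining the circular factorization poset was set up in Definition~\ref{def:circular-factorization}. Injectivity is then clear, and surjectivity follows because concatenation $\bu\cdot\bw$ produces a valid weighted linear factorization of $g$ for any weighted linear factorization $\bw$ of $h'$. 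The order relation on $\upper([\bu])$ corresponds to common refinement of factorizations, and since $\bu$ is held fixed, refining $\bu \cdot \bw_1$ to $\bu \cdot \bw_2$ is equivalent to refining $\bw_1$ to $\bw_2$; thus $\Phi$ preserves and reflects the partial order.

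Next I would address the topological content. Each graded piece of $\upper([\bu])$ is a union of cells whose parametrization by weights is inherited from the linear topological poset $\Fc(G,g,\Ib)$. The decomposition $\bv = \bu \cdot \bw$ identifies these cells with those coming from $\Fc(G,h',\Ib)$, and the quotient identifications on both sides agree under $\Phi$, so $\Phi$ is continuous with continuous inverse and hence an isomorphism of topological posets. The hardest step here is the well-definedness under the circular equivalence at the seam; everything else is routine bookkeeping once the splitting of the seam weight is in place.

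Finally, the statement about maximal elements is an immediate consequence. An element of $\upper([\bu])$ is maximal in $\Fc(G,g,\Sb)$ if and only if its image under $\Phi$ is maximal in $\Fc(G,h',\Sb)$, and by Theorem~\ref{mainthm:max-elements} the maximal elements of $\Fc(G,h',\Sb)$ form a subspace homeomorphic to $K_{h'}$. Under the embedding of $\upper([\bu])$ into $\Fc(G,g,\Sb)$, this subspace sits inside the copy of $K_g$ formed by the maximal elements of the ambient poset, and the local cell structure (and hence the orthoscheme metric) is preserved, giving the claimed isometric inclusion $K_{h'} \hookrightarrow K_g$.
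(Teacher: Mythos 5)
There is a genuine gap, and it is precisely the step your sketch treats as routine. You claim that $[\bv]\geq[\bu]$ forces a representative of the form $\bv=\bu\cdot\bw$ with $\bw$ a weighted linear factorization of $h'$, and that conversely ``concatenation'' of $\bu$ with any weighted linear factorization of $h'$ yields a weighted linear factorization of $g$. But the product of $G$-multisets is pointwise (Definition~\ref{def:multiset-op}), and the supports of $\bu$ and of the complementary piece $\bw=\bu^{-1}\bv$ interleave on the interval; the element $\rho(\bw)$, obtained by multiplying the nontrivial values of $\bw$ in positional order, is in general only \emph{conjugate} to $h'$, not equal to it. Concretely, if $\bu=0^1 s^{x}1^1$ with $h=x$ and $\bv=0^1 t^{y}s^{x}1^1$ with $t<s$ and $yx=g$, then $\bw=0^1t^y1^1$ has $\rho(\bw)=y=hh'h^{-1}\neq h'$. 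So as written your map $\Phi$ does not land in $\Fc(G,h',\Sb)$, and your surjectivity argument fails for the same reason. The missing ingredient is the explicit conjugation (a Hurwitz move) used in the paper's Theorem~\ref{thm:interval-lower-upper}: one must replace each value $(\bu^{-1}\bv)(r)$ by its conjugate under $\prod_{s>r}\bu(s)$, which pushes the $\bu$-factors to one end and turns the complement into an honest weighted linear factorization of $h'$; Lemma~\ref{lem:multiset-division-order} is then needed to check that this corrected bijection preserves and reflects the order.

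Structurally, the paper also proceeds differently: it proves the linear statement first (the upper set $\upper(\bv)$ in $\Fc(G,g,\Ib)$ is isomorphic to $\Fc(G,h',\Ib)$) and then obtains the circular statement by observing $\upper([\bv])=q(\upper(\bv))$ and passing to the quotient. Your instinct to worry about the seam under the circular equivalence is reasonable, but that issue is absorbed by the definition of the quotient order; the conjugation issue above is the one that actually carries the content of the theorem and cannot be dismissed as bookkeeping. If you insert the conjugating map $\phi$ from Theorem~\ref{thm:interval-lower-upper} (and verify order preservation via Lemma~\ref{lem:multiset-division-order}), the rest of your outline, including the deduction about maximal elements from Theorem~\ref{mainthm:max-elements}, goes through.
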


In the specific case of degree-$d$ continuous noncrossing partitions
$\NCS{d}{}$, there is a close connection with degree-$d$ polynomials
that has already been noted in the literature.  The preimage of the
union of the real and imaginary coordinate axes under a degree-$d$
complex polynomial, for example, determines a degree-$d$ continuous
noncrossing partition of $\Sb$ in which each non-trivial block has
size divisible by~$4$ \cite{martin-savitt-singer-07}.  The blocks are
determined by the points in the circle at infinity asymptotically
approached by the ends of a connected component of the preimage.
Additional constructions connecting continuous noncrossing partitions
and complex polynomials can be found in our earlier articles
\cite{gcp1, gcp2}.

The theorems proved here allow us to tackle the following natural
question: what is the relationship between the dual braid complex and
other classifying spaces for the braid group?  One classical example
is the space $\poly_d^{mc}(\mathbb{C}^*)$ of all monic complex
polynomials with $d$ roots which are centered at the origin and
critical values in the punctured plane $\mathbb{C}^*$; by observing
that a polynomial has distinct roots if and only if it does not have
$0$ as a critical value, this space can be identified with the
configuration space of $d$ unordered points in the plane which are
centered at the origin. Fox and Neuwirth showed in 1962 that
$\poly_d^{mc}(\mathbb{C}^*)$ is a classifying space for the $d$-strand
braid group, which means that this space is homotopy equivalent to the
dual braid complex for abstract reasons.  In this article we provide a
more concrete connection.

\begin{mainthm}[Corollary~\ref{cor:dual-braid-spine}]
  \label{mainthm:dual-braid-spine}
  The dual braid complex is a spine for $\poly_d^{mc}(\mathbb{C}^*)$.
  More specifically, the dual braid complex is homeomorphic to the
  subspace of polynomials with critical values on the unit circle, and
  there is a deformation retraction from $\poly_d^{mc}(\mathbb{C}^*)$
  to this subspace.
\end{mainthm}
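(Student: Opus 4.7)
The plan is to leverage Corollary~\ref{cor:dual-complex}, which identifies the dual braid complex $K_\delta$ with the subspace of maximal elements of the circular topological poset $\Fc(\sym_d,\delta,\Sb)$, by building an explicit correspondence between polynomials in $\poly_d^{mc}(\C^*)$ and weighted circular factorizations of the $d$-cycle $\delta$ with ``scaled'' weights in $\C^*$.  This correspondence extends the polynomial-to-partition assignments of \cite{martin-savitt-singer-07} and those used in \cite{gcp1, gcp2}.

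First I would construct a continuous map $\Phi$ from $\poly_d^{mc}(\C^*)$ to a natural ``radial enlargement'' of $\Fc(\sym_d,\delta,\Sb)$, in which the weights--instead of being distributed on $\Sb$--are allowed to sit at any positive radius.  For a polynomial $p$ with critical values $v_1,\ldots,v_k$ (enumerated with multiplicity in cyclic order around the origin), let $\sigma_i \in \sym_d$ denote the monodromy permutation of the branched cover $p \colon \C \to \C$ along a small loop around $v_i$, transported to a common basepoint.  The data $(v_i,\sigma_i)_{i=1}^k$ packages as a weighted circular factorization with radial positions in $\C^*$, and since $p$ is monic and centered, the product of monodromies at infinity is $\delta$, so $\Phi(p)$ is a genuine factorization of $\delta$.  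Under $\Phi$, the locus of polynomials with $|v_i|=1$ for every $i$ lands exactly in the subspace of maximal elements of $\Fc(\sym_d,\delta,\Sb)$.

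Next I would verify that $\Phi$ is a homeomorphism onto the full radial enlargement.  Continuity is routine, since critical points, critical values, and local monodromy all depend continuously on the coefficients of $p$.  Injectivity and the construction of the inverse rely on the Riemann Existence Theorem for branched covers of $\widehat{\C}$ by $\widehat{\C}$: a monic centered polynomial is determined up to normalization by its combinatorial monodromy data together with the positions of its critical values.  Via $\Phi$ and Corollary~\ref{cor:dual-complex}, the subspace $\poly_d^{mc}(\Sb) \subset \poly_d^{mc}(\C^*)$ of polynomials with critical values on $\Sb$ is homeomorphic to $K_\delta$, which proves the first assertion.

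Finally, the deformation retraction is transparent on the factorization side: transport each $p$ through $\Phi$, and at time $t \in [0,1]$ replace each modulus $|v_i|$ by $|v_i|^{1-t}$, keeping the circular positions $v_i/|v_i|$ and the permutations $\sigma_i$ fixed; transport back by $\Phi^{-1}$.  The critical values move radially toward $\Sb$ without ever passing through $0$; at $t=0$ we recover $p$, at $t=1$ all critical values lie on $\Sb$, and $\poly_d^{mc}(\Sb)$ is fixed throughout.  Continuity of the resulting map $H$ follows from that of $\Phi$ and $\Phi^{-1}$.  The main obstacle will be the rigorous construction of $\Phi$ and the verification that it is a homeomorphism onto the enlarged factorization space--matching the coefficient topology on $\poly_d^{mc}(\C^*)$ with the topology on $\Fc(\sym_d,\delta,\Sb)$ introduced in Section~\ref{sec:top-posets}.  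Once this analytic--combinatorial dictionary is in place, the remaining assertions follow formally from Corollary~\ref{cor:dual-complex}.
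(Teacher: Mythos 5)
Your proposal takes a genuinely different route from the paper, and as written it has a real gap at its central step. The paper's proof is a short deduction: it invokes Thurston's Theorem~9.2 (\cite{thurston20}, restated as Theorem~\ref{thm:thurston-def-ret}), which already supplies \emph{both} the homeomorphism between $\poly_d^{mc}(\Sb)$ and the maximal elements of $\NCS{d}{}$ \emph{and} the deformation retraction from $\poly_d^{mc}(\mathbb{C}^*)$, and then chains this with Theorem~\ref{thm:degree-d-invariant-isom} ($\NCS{d}{} \cong \Fc(\sym_d,\delta,\Sb)$) and Theorem~\ref{thm:max-elements} (the maximal elements form $K_\delta$). You instead propose to bypass Thurston entirely and build an analytic dictionary between polynomials and weighted factorizations via monodromy. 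That is essentially the program of the companion paper \cite{gcp2} (and, in greater generality, of Bessis \cite{bessis15}), so it is a viable strategy, but it is not the one carried out here, and the present paper explicitly states that it relies on Thurston's deformation retraction.

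The gap is exactly where you write ``the main obstacle'': the claim that $\Phi$ is a homeomorphism onto a suitable radial enlargement of $\Fc(\sym_d,\delta,\Sb)$ carries the entire weight of the theorem and is not routine. Concretely: (i) the permutations $\sigma_i$ are only well defined after a canonical choice of paths to a common basepoint, and when two critical values share an argument --- or collide during your radial homotopy $|v_i| \mapsto |v_i|^{1-t}$ --- the factorization must merge entries by multiplying permutations in a specified order; continuity of $\Phi$ and $\Phi^{-1}$ through these mergers is precisely the delicate point that the orthoscheme and face-identification machinery of Sections~\ref{sec:wt-factors}--\ref{sec:top-posets} handles on $\Sb$, and you have not defined a topology on the enlargement over $\C^*$ for which this works. (ii) The Riemann Existence Theorem determines a polynomial realizing given branch data only up to precomposition by an affine map whose linear part is a $d$-th root of unity, so a residual $\Z/d$ ambiguity must be resolved by the labeling of sheets at infinity before injectivity of $\Phi$ can be asserted. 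Until these points are settled, neither the homeomorphism $\poly_d^{mc}(\Sb) \cong K_\delta$ nor the continuity of the retraction is established. Within this paper's framework the intended proof is the citation-based one; your approach would instead reproduce the content of \cite{gcp2}.
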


The existence of the deformation retraction (without reference to a
cell structure) was stated by W. Thurston in an unpublished manuscript
from 2012, which was posthumously completed by Baik, Gao, Hubbard,
Lei, Lindsey, and D. Thurston \cite{thurston20}. In this 2012
manuscript, Thurston introduced \emph{degree-$d$-invariant
laminations}, which provide a useful tool for describing points in the
subspace of polynomials with critical values on the unit circle.  In
this article, we use the preceding theorems to describe a
characterization of points in the dual braid complex which aligns with
Thurston's degree-$d$-invariant laminations, thus providing an
explicit embedding of the dual braid complex into $\poly_d^{mc}(\mathbb{C}^*)$. 
To complete the proof of Theorem~\ref{mainthm:dual-braid-spine}, we invoke
Thurston's deformation retraction.

The connection between spaces of polynomials and the dual braid
complex was pointed out to us by Daan Krammer in 2017, and his comment
inspired much of the work in this article and other recent work by the
authors \cite{DoMc-20,gcp1,gcp2}.  In \cite{gcp2}, we describe a
bounded piecewise-Euclidean metric for $\poly_d^{mc}(\mathbb{C}^*)$
and a finite cell structure for its metric completion such that the
subspace $\poly_d^{mc}(\Sb)$ of polynomials with critical values on
the unit circle inherits a metric cell structure which is isometric to
the dual braid complex. In particular, \cite{gcp2} contains an
alternative direct proof of Theorem~\ref{mainthm:dual-braid-spine}
which does not rely on Thurston's argument.

A general version of Theorem~\ref{mainthm:dual-braid-spine} appears in
work of David Bessis \cite[Section~10]{bessis15}, where he uses a
complex of spaces on the universal cover of
$\poly_d^{mc}(\mathbb{C}^*)$ together with tools from algebraic
topology \cite[Section~4.G]{hatcher} to prove the result.  In fact,
his work proves a version of Theorem~\ref{mainthm:dual-braid-spine}
that extends to all finite complex reflection groups, including all
finite Coxeter groups.  Concretely, Bessis proves that if $\cb$ is a
Coxeter element for a finite Coxeter group $W$, then the interval
complex $K_\cb$ is a spine for the corresponding quotient of the
complexified hyperplane complement \cite{bessis15}. The constructions
introduced here might lead to an alternate algebraic proof of Bessis'
result.

\subsection*{Structure of the article} 
Sections~\ref{sec:posets} and \ref{sec:complexes}
review the definitions and properties of interval posets, orthoschemes
and interval complexes. Section~\ref{sec:factors} introduces linear
and circular factorization posets and examines their combinatorial
structure.  Section~\ref{sec:wt-factors} introduces weighted
analogs of these factorizations and examines their topology and
geometry.  Section~\ref{sec:top-posets} defines graded topological
posets of weighted factorizations and proves
Theorems~\ref{mainthm:max-elements},
\ref{mainthm:dual-complex}, and
\ref{mainthm:upper-sets}. Section~\ref{sec:cont-nc-part} defines the
poset $\NCS{d}{}$ of degree-$d$-invariant partitions of the circle and
proves Theorems~\ref{mainthm:nc-fact-isomorphism} and
\ref{mainthm:dual-braid-spine}.

\section{Posets}\label{sec:posets}

In this section we recall some background information for partially
ordered sets and a special kind of metric simplex known as an
orthoscheme.  See \cite[Ch.~3]{stanley-ec1} for a standard reference
on posets and \cite{hatcher,BrMc-10} for background on simplices.

A partially ordered set $P$ is a \emph{lattice} if each pair of
elements has a unique meet and a unique join. A poset is
\emph{bounded} if it has a unique maximum element and a unique minimum
element.  A \emph{chain} of length $k$ in $P$ is a collection of
distinct elements $x_0,\ldots,x_k \in P$ such that $x_0 < \cdots <
x_k$, and a chain is \emph{maximal} if it is not properly contained in
another chain.  We write $\chains(P)$ for the poset of all chains in
$P$ under inclusion.  We say that $P$ is \emph{graded} if there is a
\emph{rank function} $\rank \colon P \to \mathbb{N}$ such that for all
$x,y\in P$, we have $\rank(x) < \rank(y)$ whenever $x < y$ and
$\rank(x) + 1 = \rank(y)$ whenever $x<y$ and there is no $z\in P$ with
$x < z < y$. A finite bounded poset is graded if and only if all of
its maximal chains have the same length, which we call the
\emph{height} of the poset. Given two elements $x,y\in P$, the set of
all $z\in P$ with $x \leq z \leq y$ is the \emph{interval} $[x,y]$.
The set of all elements $y\in P$ with $x\leq y$ is called the
\emph{upper set} of $x$ and is denoted $\upper(x)$.  Similarly, the
set of all $z\in P$ with $z\leq x$ is the \emph{lower set} of $x$,
denoted $\low(x)$.

\begin{example}[Boolean lattice]
  The \emph{Boolean lattice} $\bool(n)$ consists of all subsets for
  the $n$-element set $\{1,\ldots,n\}$, partially ordered under
  inclusion, and it is indeed a lattice: given $A,B\in \bool(n)$, the
  unique meet is $A\cap B$ and the unique join is $A\cup B$. This
  poset is also graded, with rank function $\rank\colon\bool(n) \to
  \mathbb{N}$ given by $\rank(A) = |A|$ for each $A\subseteq
  \{1,\ldots,n\}$. Fixing an element $A\in \bool(n)$ with rank $k$,
  the lower set $\low(A)$ is isomorphic to the smaller Boolean lattice
  $\bool(k)$, whereas the upper set $\upper(A)$ is isomorphic to
  $\bool(n-k)$. Finally, we write $\bool^*(n)$ to mean the subposet of
  nonempty elements in $\bool(n)$ and refer to this as the
  \emph{truncated Boolean poset}.
\end{example}

There is a close connection between partially ordered sets and
simplicial complexes.  Each cell complex has an associated \emph{face
poset} and each poset has an associated \emph{order complex}. We will
make use of both operations, and we begin by describing the first.

\begin{defn}[Face posets]\label{def:face-poset}
  Let $X$ be a simplicial complex. The \emph{face poset} $P(X)$ is
  defined to be the graded poset of all faces of $X$ (including the
  empty face), ordered by inclusion.
\end{defn}

For example, each face of an $n$-dimensional simplex can be specified
precisely by a subset of the $n+1$ vertices, and the relation of
incidence between two faces corresponds exactly to inclusion between
the two subsets.  In other words, the face poset for the $n$-simplex
is isomorphic to the Boolean lattice $\bool(n+1)$.

\begin{defn}[Order complexes]\label{def:order-complex}
  Let $P$ be a graded poset. The \emph{order complex} $\Delta(P)$ is
  the ordered simplicial complex with vertex set $P$ and an ordered
  $k$-simplex on the vertices $x_0,\ldots,x_k$ whenever $x_0 < \cdots
  < x_k$ is a chain in $P$.  By construction, there is a bijection
  between $\textsc{Cells}(\Delta(P))$ and $\chains(P)$.  See Hatcher's
  book for background on ordered simplicial complexes and
  $\Delta$-complexes \cite{hatcher}.
\end{defn}

The order complex of $\bool(n)$, for example, is homeomorphic to the
cell complex obtained by subdividing the cube $[0,1]^n \subset
\mathbb{R}^n$ into $n!$ top-dimensional simplices via the
$\binom{n}{2}$ hyperplanes with equations $x_i = x_j$ where $i\neq j$.
Each of the top-dimensional simplices is determined by a path of
length $n$ from $(0,\ldots,0)$ to $(1,\ldots,1)$ along the edges of
the cube which gives an ordering of the vertices for each
simplex. Moreover, we can promote this homeomorphism to an isometry
with an appropriate choice of metric for the order complex.

\begin{defn}[Orthoschemes]\label{def:orthoscheme}
  The simplex spanned by points $\bp_0,\ldots,\bp_n$ in $\mathbb{R}^n$
  is called an \emph{$n$-dimensional orthoscheme} if the set of $n$
  vectors $\{\bp_{i} - \bp_{i-1} \mid i \in [n]\}$ is orthogonal. If
  those vectors are orthonormal, then the simplex is a \emph{standard
  $n$-dimensional orthoscheme}, and it is isometric to the subset of
  points $(x_1,\ldots,x_n) \in \mathbb{R}^n$ subject to the
  inequalities $0 \leq x_1 \leq x_2 \leq \cdots \leq x_n \leq 1$.
\end{defn}

For each bounded graded poset $P$, we give the order complex
$\Delta(P)$ the \emph{orthoscheme metric}, in which each maximal
simplex is a standard orthoscheme with the order of its vertices
determined by the order of the corresponding maximal chain in $P$. For
more detail on this use of the orthoscheme metric, see
\cite[Sec. 5-6]{BrMc-10}.

To conclude this section, we give a brief remark on the faces of
orthoschemes.

\begin{rem}\label{rem:orthoscheme-faces}
  Each face of a standard $n$-dimensional orthoscheme is itself an
  orthoscheme, but one which is not necessarily standard. Using the
  inequalities $0 \leq x_1 \leq x_2 \leq \cdots \leq x_n \leq 1$ to
  describe the standard $n$-dimensional orthoscheme, each nonempty $A
  = \{t_0,\ldots,t_k\}$ in $\bool(n+1)$ corresponds to the
  $k$-dimensional face determined by the equations
  \begin{align*}
    x_i = 0 &\text{ if } 1 \leq i \leq t_0; \\
    x_i = x_{i+1} &\text{ if } t_j < i < i+1 \leq t_{j+1}
    \text{ for some } j \in \{0,1,\ldots,k-1\}; \\
    x_i = 1 &\text{ if } t_{k} < i \leq n+1.
  \end{align*} 
  In plain language, each element of $\bool(n+1)$ determines a face by
  changing some of the $n+1$ inequalities into equalities. See
  Table~\ref{tbl:set-face} for an example when $n=2$.  Furthermore,
  this face is isometric to the set of points $(y_1,\ldots,y_k) \in
  \mathbb{R}^k$ determined by the inequalities
  \[
  0 \leq \frac{y_1}{\sqrt{t_1-t_0}} \leq \frac{y_2}{\sqrt{t_2-t_1}} \leq \cdots 
  \leq \frac{y_k}{\sqrt{t_k-t_{k-1}}} \leq 1.
  \]
  Note in particular that two nonempty subsets $\{t_0,\ldots,t_k\}$
  and $\{s_0,\ldots,s_k\}$ of $[n+1]$ label isometric $k$-dimensional
  faces if $t_i - t_{i-1} = s_i - s_{i-1}$ for all $i \in
  \{1,2,\ldots,k\}$.
\end{rem}

\begin{table}
  \begin{tabular}{c|c}
    element of $\bool(3)$ & face of the $2$-orthoscheme \\
    \hline
    \hline 
    $\{1,2,3\}$ & $0 \leq x_1 \leq x_2 \leq 1$ \\
    $\{1,2\}$ & $0 \leq x_1 \leq x_2 = 1$ \\
    $\{2,3\}$ & $0 = x_1 \leq x_2 < 1$ \\
    $\{1,3\}$ & $0 \leq x_1 = x_2 \leq 1$ \\
    $\{1\}$ & $0 \leq x_1 = x_2 = 1$ \\
    $\{2\}$ & $0 = x_1 \leq x_2 = 1$ \\
    $\{3\}$ & $0 = x_1 = x_2 \leq 1$ 
  \end{tabular}
  \vspace*{1em}
  \caption{A bijection between subsets and (closed) faces.\label{tbl:set-face}}
\end{table}

\section{Complexes}\label{sec:complexes}

In this section, we review the definition and basic properties of
interval complexes and the dual braid complex. For the rest of the
article $G$ is a group and $X$ is a fixed conjugacy-closed generating
set for $G$.

\begin{defn}[Intervals and Order Complexes]\label{def:intervals}
  Suppose $g\in G$ can be written as a product of elements in $X$
  (i.e. $g$ belongs to the \emph{monoid} generated by $X$) and let
  $\ell(g)$ denote the length of a minimal factorization of $g$ into
  elements of $X$. This induces a partial order on $G$ by declaring
  that $h \leq g$ if $h\cdot h' = g$ and $\ell(h) + \ell(h') =
  \ell(g)$; in other words, $h\leq g$ if there is a minimal-length
  factorization of $g$ into elements of $X$ which has a minimal-length
  factorization of $h$ as a left prefix. The \emph{interval poset}
  $P_g=[1,g]$ is the poset of all elements between the identity and
  $g$ in this ordering.  Observe that the order diagram or Hasse
  diagram of $P_g$ is the directed graph obtained by taking the union
  of all geodesics from $1$ to $g$ in the right Cayley graph of $G$
  with respect to $X$.  Since every geodesic has the same length, this
  makes the interval poset $P_g$ into a bounded graded poset with
  height $\ell(g)$.  The order complex of $P_g$ is the ordered
  simplicial complex $O_g = \Delta(P_g)$.
\end{defn}

The primary application of these general constructions is when $G=W$
is a Coxeter group, $X=T$ is the set of all reflections, and $g=\cb$ is a
Coxeter element of $W$.  Of particular interest is the ``type A'' case
where $G = \sym_d$, $X=T$ is the set of all transpositions, and
$g=\delta$ is the $d$-cycle $(1\ 2\ \cdots\ d)$.  Finally, the case
where $G$ is $\Z$ is a fundamental building block of the general
theory.

\begin{example}
  Let $G = \Z$ with generating set $X = \{1\}$. For each positive
  integer $n$, $\ell(n) = n$, and the induced partial order is the
  usual one for $\Z$.  The interval from $0$ to $n$ in this poset
  consists of a single chain with $n+1$ elements.
\end{example}

\begin{example}\label{ex:absolute-order-interval}
  Let $G$ be the symmetric group $\sym_d$ and let $X = T$ be the set
  of all transpositions.  Then for all $g\in \sym_d$, the length
  $\ell(g)$ is called the \emph{absolute reflection length} and the
  induced partial order is the \emph{absolute order} on the symmetric
  group. If we define $\delta$ to be the $d$-cycle $(1\ 2\ \cdots\ d)
  \in \sym_d$, then the interval $[1,\delta]$ is the \emph{lattice of
  noncrossing permutations} \cite{mccammond06},
  \cite[Section~4]{gcp2}. For example, if $\delta = (1\ 2\ 3) \in
  \sym_3$ and $T = \{a,b,c\}$ where $a = (1\ 2)$, $b = (2\ 3)$ and $c
  = (1\ 3)$, then the interval $[1,\delta]$ consists of five elements
  (see Figure~\ref{fig:absolute-order-interval}). This structure
  produces the \emph{dual presentation} for $\braid_3$, defined by
  $\langle a,b,c,\delta \mid ab = bc = ca = \delta\rangle$. More
  generally, the dual presentation for the $d$-strand braid group
  $\braid_d$ has as its generating set the nontrivial elements of
  $[1,\delta]$, with relations consisting of all words which arise
  from closed loops in $[1,\delta]$ which are based at the identity
  \cite{brady01,bessis-03}. See also \cite{McCammond-dual-euc-art,
    McCammondSulway-euc-art}.
\end{example}

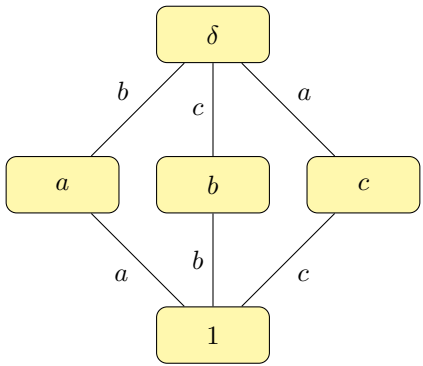
\begin{figure}
  \centering
  \begin{tikzpicture}
    \node[RowNode] (top) at (0,2) {$\delta$};
    \node[RowNode] (a1) at (-2,0) {$a$};
    \node[RowNode] (a2) at (0,0) {$b$};
    \node[RowNode] (a3) at (2,0) {$c$};
    \node[RowNode] (bottom) at (0,-2) {$1$};
    \draw (top) -- node [above left] {$b$} (a1) -- node [below left] {$a$} (bottom);
    \draw (top) -- node [left] {$c$} (a2) -- node [left] {$b$} (bottom);
    \draw (top) -- node [above right] {$a$} (a3) -- node [below right] {$c$} (bottom);
  \end{tikzpicture}
  \caption{The interval poset $P_\delta = [1,\delta]$ described in
    Example~\ref{ex:absolute-order-interval}, depicted as a subgraph
    of the Cayley graph $\text{Cay}(\sym_3,T)$.  Its order complex
    $O_g$ is a triple of right-angled Euclidean triangles with a common
    hypotenuse.}
  \label{fig:absolute-order-interval}
\end{figure}

Because the generating set $X$ is assumed to be closed under
conjugation, the interval poset $P_g = [1,g]$ also has a certain
closure property.

\begin{lem}\label{lem:interval-factors}
  If $x_1,\ldots,x_n \in G$ with $x_1\cdots x_n = g$ and $\ell(x_1) +
  \cdots + \ell(x_n) = \ell(g)$, then for any choice of integers $1
  \leq i_1 < \cdots < i_k \leq n$, we have $x_{i_1}\cdots x_{i_k} \in
       [1,g]$.
\end{lem}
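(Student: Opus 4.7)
The plan is to establish the result by induction on $n-k$, the number of omitted factors. The base case $k=n$ is immediate since the subproduct equals $g$ itself. The inductive step reduces to a single claim: for any reduced factorization $g = x_1 \cdots x_n$ (meaning $\sum_i \ell(x_i) = \ell(g)$) and any index $j$, the product $h := x_1 \cdots \widehat{x_j} \cdots x_n$ lies in $[1,g]$, and moreover the shortened sequence $(x_1,\ldots,\widehat{x_j},\ldots,x_n)$ is itself a reduced factorization of $h$ in the same sense. Granting this claim, one removes the unwanted factors one at a time, producing a descending chain in $P_g$ that terminates at $x_{i_1}\cdots x_{i_k}$, and transitivity of $\leq$ on $P_g$ completes the proof.

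To establish the single-omission claim, I use the identity $ba = a(a^{-1}ba)$ repeatedly to slide $x_j$ past $x_{j+1},\ldots,x_n$ to the right, obtaining
\[
g \;=\; h \cdot x_j', \qquad \text{where } x_j' \;=\; (x_{j+1}\cdots x_n)^{-1}\, x_j\, (x_{j+1}\cdots x_n).
\]
Because $X$ is closed under $G$-conjugation, writing $x_j$ as a product of $\ell(x_j)$ elements of $X$ and conjugating each term shows that $x_j' \in \mon(X)$ and $\ell(x_j') \leq \ell(x_j)$; the reverse conjugation gives the opposite inequality, hence $\ell(x_j') = \ell(x_j)$. Two applications of the triangle inequality now pin everything down. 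On the one hand, $\ell(h) \leq \sum_{i\neq j} \ell(x_i) = \ell(g) - \ell(x_j)$. On the other hand, $\ell(g) \leq \ell(h) + \ell(x_j') = \ell(h) + \ell(x_j)$. These inequalities collapse into $\ell(h) = \ell(g) - \ell(x_j)$, which simultaneously certifies that $h \leq g$ via the factorization $g = h \cdot x_j'$ and that the shortened sequence remains a reduced factorization of $h$, enabling the induction.

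No serious obstacle is anticipated; the one point requiring care is the interaction between conjugation and the length function. The conjugation-closure hypothesis on $X$ is precisely what guarantees that $\mon(X)$ is conjugation-closed and that $\ell$ is conjugation-invariant, and beyond that the argument is just bookkeeping with the triangle inequality.
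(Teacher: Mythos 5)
Your proof is correct and rests on the same essential mechanism as the paper's: slide the omitted factors out of the way by conjugation, use the conjugacy-closure of $X$ to see that conjugation preserves $\ell$, and squeeze with the triangle inequality until all inequalities collapse to equalities. The only difference is organizational --- you remove one omitted factor at a time and induct, whereas the paper groups the omitted factors into blocks $w_0,\ldots,w_k$ and performs the whole rearrangement $g = x_{i_1}\cdots x_{i_k}(z_1^{-1}w_0z_1)\cdots w_k$ in a single step.
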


\begin{proof}
  We can rewrite the factorization $g = x_1\cdots x_n$ by grouping
  terms to obtain $g = w_0 x_{i_1} w_1 x_{i_2} w_2 \cdots w_{k-1}
  x_{i_k} w_k$, where $w_j$ is defined appropriately.  Note that if
  the trivial $w_j$ are removed, this is a merged factorization of
  $g$ and 
  \[
  \ell(g) = \ell(w_0) + \ell(x_{i_1}) + \ell(w_1) + \cdots
  \ell(w_{k-1}) + \ell(x_{i_k}) + \ell(w_k). 
  \]
  If we define $z_j = x_{i_j}\cdots x_{i_k}$ for each $j$, then we can
  rearrange the first product to obtain
  \[
  g = x_{i_1} \cdots x_{i_k} (z_1^{-1} w_0 z_1) (z_2^{-1} w_1 z_2)
  \cdots (z_k^{-1} w_{k-1} z_k) w_k.
  \]
  By definition, we know that $\ell(x_{i_1} \cdots x_{i_k}) \leq
  \ell(x_{i_1}) + \cdots + \ell(x_{i_k})$, and since the generating
  set $X$ is closed under conjugation, we have $\ell(z_j^{-1} w_{j-1}
  z_j)=\ell(w_{j-1})$ for each $j$.  Combining these with the equation
  above, we obtain
  \[
  \ell(g) = \ell(x_{i_1} \cdots x_{i_k}) + \ell(w_0) + \cdots +
  \ell(w_n),
  \]
  from which it follows that $x_{i_1} \cdots x_{i_k} \leq g$.
\end{proof}

\begin{defn}[Dual braid complex]\label{def:dual-braid-complex}
  Let $g\in G$. The \emph{interval complex} $K_g$ associated to the
  interval $P_g = [1,g]$ is the single-vertex $\Delta$-complex
  obtained by identifying faces in the order complex $O_g =
  \Delta([1,g])$ as follows: the $k$-simplices labeled by chains $x_0
  < \cdots < x_k$ and $y_0 < \cdots < y_k$ are identified if and only
  if $x_{i-1}^{-1}x_i = y_{i-1}^{-1}y_i$ for all $i \in
  \{1,\ldots,k\}$. Note that this identification is well-defined by
  the ordering given to each simplex.  In particular, there is a
  well-defined surjective cellular map $q \colon O_g \onto K_g$ from
  the ordered simplicial complex $O_g$ to the one-vertex
  $\Delta$-complex $K_g$.
  In the special case when $G = \sym_d$ and $X=T$ as outlined in
  Example~\ref{ex:absolute-order-interval}, we write $\delta =
  (1\ 2\ \cdots\ d)$ and refer to the interval complex $K_\delta$ as
  the (one vertex) \emph{dual braid complex}\footnote{In some
  articles, ``dual braid complex'' refers to the universal cover of
  $K_g$, rather than $K_g$ itself.  Here, the term refers to the
  one-vertex complex $K_g$ since the universal cover is never needed.}.
\end{defn}

Brady introduced the dual braid complex in \cite{brady01} and showed
that it is a classifying space for the $d$-strand braid group. Using
the orthoscheme metric, it is conjectured that the dual braid complex
is locally $\cat(0)$ \cite{BrMc-10}. This has been shown when $d\leq
7$ but remains open for higher values of $d$
\cite{BrMc-10,haettel-kielak-schwer-16,Jeong_2023}.

\begin{rem}\label{rem:dual-braid-subcomplexes}
  If $h \leq g$, then the order complex $O_h = \Delta([1,h])$ is
  isometric to a subcomplex of $O_g = \Delta([1,g])$ and, by following
  the gluing described in Definition~\ref{def:dual-braid-complex}, we
  can see that the interval complex $K_h$ is isometric to a subcomplex
  of $K_g$. When $G = \sym_d$ and $X=T$, each permutation $\gamma \in
  \sym_d$ can be written as a product of disjoint cycles $\gamma = x_1
  \cdots x_k$ and the interval $[1,\gamma]$ is isomorphic to the
  product of intervals $[1,x_1] \times \cdots \times [1,x_k]$, so it
  follows that the interval complex $K_\gamma$ is a subcomplex of
  $K_\delta$ which is isometric to a product of smaller dual braid
  complexes.
\end{rem}

\section{Cells and Factorizations}\label{sec:factors}

In this section we develop a convenient way of describing the cells in
the order complex $O_g$ and its quotient interval complex $K_g =
q(O_g)$. To this end, we define two key posets for each $g\in G$:
$\fact(G,g,\Ib)$, the poset of linear factorizations of $g$, and
$\fact(G,g,\Sb)$, the poset of circular factorizations of $g$. As
usual, let $G$ be a group with conjugacy-closed generating set $X$ and
let $P_g = [1,g]$ be the interval poset for an element $g \in \mon(X)$
(Definition~\ref{def:intervals}).  Let $\ell\colon \mon(X) \to \Z$ be
the length function that takes each $h \in \mon(X)$ to its
\emph{length} as a word over $X$.  Note that for $h \in P_g$,
$\ell(h)$ is its rank.

\begin{defn}[Linear factorizations]\label{def:factorization}
  A \emph{linear factorization of $g \in \mon(X)$} is a row vector
  $\bx = [x_L\ x_1\ \cdots\ x_k\ x_R]$ with entries in $\mon(X)$ such that
  \begin{enumerate}
  \item $x_i$ is nontrivial when $i \in \{1,\ldots,k\}$;
  \item $\ell(x_L) + \ell(x_1) + \cdots + \ell(x_k) + \ell(x_R) = \ell(g)$;
  \item $x_L\cdot x_1\cdots x_k\cdot x_R = g$.
  \end{enumerate} 
  Note that the entries here are elements of $\mon(X)$, not
  necessarily elements in $X$.  Also note that the left and right
  entries of $\bx$ ($x_L$ and $x_R$) are labeled differently from the
  others since they are treated differently.  In particular, these
  elements are allowed to be trivial.  For each $i \in
  \{0,1,\ldots,k\}$, a linear factorization
  \[
    [x_L\  \cdots\ x_{i-1}\ x_i\ x_{i+1}\ x_{i+2} \cdots \ x_R]
  \]
  of length $k+2$ can be \emph{merged} at position $i$ to obtain the
  linear factorization
  \[
    [x_L\  \cdots\ x_{i-1}\ (x_ix_{i+1})\ x_{i+2} \cdots \ x_R]
  \]
  of length $k+1$ (where $x_0$ and $x_{k+1}$ are understood to mean
  $x_L$ and $x_R$ respectively).  Let $\fact(G,g,\Ib)$ denote the set
  of all linear factorizations of $g$, equipped with the partial order
  $\bx \leq \by$ if $\bx$ can be obtained from $\by$ by a sequence of
  merges.
\end{defn}

By Lemma~\ref{lem:interval-factors}, we see that the elements of
$\sym_d$ which appear in linear factorizations of $g$ are precisely
those which belong to the interval $[1,g]$. We also note that linear
factorizations are closely related to the ``reduced products'' in
\cite{Mc-ncht} and the ``block factorizations'' in \cite{ripoll12},
but with the slight variation that the first and last entries are
permitted to be trivial.

\begin{example}[$\fact(\sym_3,\delta,\Ib)$]\label{ex:weak-fact-2}
  If $G = \sym_3$ and $X = \{a,b,c\}$ as in
  Example~\ref{ex:absolute-order-interval}, then
  $\fact(\sym_3,\delta,\Ib)$ contains the $15$ elements shown in
  Figure~\ref{fig:weak-fact-2}.  There are $3$ in rank~$2$, $7$ in
  rank~$1$, and $5$ in rank~$0$.
\end{example}

\begin{figure}
  \centering
  \begin{tikzpicture}
    \begin{scope}
      \node[RowNode] (a1) at (-3,4) {$[1\ a\ b\ 1]$};
      \node[RowNode] (a2) at (0,4) {$[1\ c\ a\ 1]$};
      \node[RowNode] (a3) at (3,4) {$[1\ b\ c\ 1]$};
      
      \node[RowNode] (b1) at (-5.25,2) {$[a\ b\ 1]$};
      \node[RowNode] (b2) at (-3.5,2) {$[c\ a\ 1]$};
      \node[RowNode] (b3) at (-1.75,2) {$[b\ c\ 1]$};
      \node[RowNode] (b4) at (0,2) {$[1\ \delta\ 1]$};
      \node[RowNode] (b5) at (1.75,2) {$[1\ a\ b]$};
      \node[RowNode] (b6) at (3.5,2) {$[1\ c\ a]$};
      \node[RowNode] (b7) at (5.25,2) {$[1\ b\ c]$};
      
      \node[RowNode] (c1) at (-4,0) {$[\delta\ 1]$};
      \node[RowNode] (c2) at (-2,0) {$[a\ b]$};
      \node[RowNode] (c3) at (0,0) {$[c\ a]$};
      \node[RowNode] (c4) at (2,0) {$[b\ c]$};
      \node[RowNode] (c5) at (4,0) {$[1\ \delta]$};
            
      \draw (a1) -- (b1);
      \draw (a1) -- (b4);
      \draw (a1) -- (b5);
      \draw (a2) -- (b2);
      \draw (a2) -- (b4);
      \draw (a2) -- (b6);
      \draw (a3) -- (b3);
      \draw (a3) -- (b4);
      \draw (a3) -- (b7);
      
      \draw (b1) -- (c1);
      \draw (b1) -- (c2);
      \draw (b2) -- (c1);
      \draw (b2) -- (c3);
      \draw (b3) -- (c1);
      \draw (b3) -- (c4);
      \draw (b4) -- (c1);
      \draw (b4) -- (c5);
      \draw (b5) -- (c2);
      \draw (b5) -- (c5);
      \draw (b6) -- (c3);
      \draw (b6) -- (c5);
      \draw (b7) -- (c4);
      \draw (b7) -- (c5);
    \end{scope}
  \end{tikzpicture}
  \caption{The poset $\fact(\sym_3,\delta,\Ib)$ of linear
    factorizations of the $3$-cycle $\delta = (1\ 2\ 3)$ with respect
    to the generators $a= (1\ 2)$, $b = (2\ 3)$, and $c = (1\ 3)$.}
    \label{fig:weak-fact-2}
\end{figure}

\begin{example}[$\comp(\Z,n,\Ib)$]
  In the special case $G = \Z$ and $X = \{1\}$ we switch to additive
  notation and refer to the linear factorizations of $n \in \Z$ as
  \emph{linear compositions}. The set of all linear compositions is
  denoted $\comp(\Z,n,\Ib)$.  The reason for this switch to additive
  notation is to make the map $L$ from factorizations to compositions
  easier to define.  See Definition~\ref{def:fact-to-comp}.  Unlike
  the more general case, $\comp(\Z,n,\Ib)$ has a unique maximum
  element given by the row vector $[0\ 1\ \cdots\ 1\ 0]$ of length
  $n+2$. See Figure~\ref{fig:weak-comp-2} for an example when $n = 2$.
\end{example}

\begin{figure}
  \centering
  \begin{tikzpicture}
    \begin{scope}
      \node[RowNode] (top) at (0,4) {$[0\ 1\ 1\ 0]$};
      
      \node[RowNode] (a1) at (-3,2) {$[1\ 1\ 0]$};
      \node[RowNode] (a2) at (0,2) {$[0\ 2\ 0]$};
      \node[RowNode] (a3) at (3,2) {$[0\ 1\ 1]$};
      
      \node[RowNode] (b1) at (-3,0) {$[2\ 0]$};
      \node[RowNode] (b2) at (0,0) {$[1\ 1]$};
      \node[RowNode] (b3) at (3,0) {$[0\ 2]$};
            
      \draw (top) -- (a1);
      \draw (top) -- (a2);
      \draw (top) -- (a3);
      
      \draw (a1) -- (b1);
      \draw (a1) -- (b2);
      \draw (a2) -- (b1);
      \draw (a2) -- (b3);
      \draw (a3) -- (b2);
      \draw (a3) -- (b3);
    \end{scope}
  \end{tikzpicture}
  \caption{The poset $\comp(\Z,2,\Ib)$ of linear compositions of $2$.}
  \label{fig:weak-comp-2}
\end{figure}

\begin{prop}\label{prop:fact-chain-poset}
  Let $g\in G$. Then $\fact(G,g,\Ib)$ is isomorphic to the poset of
  nonempty chains for $[1,g]$, ordered by inclusion.
\end{prop}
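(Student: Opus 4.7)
The plan is to construct an explicit order-preserving bijection $\Phi \colon \fact(G,g,\Ib) \to \chains^*([1,g])$, where $\chains^*([1,g])$ denotes the poset of nonempty chains in $P_g$ ordered by inclusion, then produce an explicit inverse and check that merges on one side correspond to chain deletions on the other.

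First I would define $\Phi$ by sending a linear factorization $\bx = [x_L\ x_1\ \cdots\ x_k\ x_R]$ to the sequence of partial products
\[
\Phi(\bx) = \bigl(x_L,\ x_Lx_1,\ x_Lx_1x_2,\ \ldots,\ x_Lx_1\cdots x_k\bigr).
\]
To verify this is a chain of $k+1$ elements in $P_g$, note that for each $i$ the factorization $g = (x_Lx_1\cdots x_i) \cdot (x_{i+1}\cdots x_k \cdot x_R)$ satisfies the length-additivity condition from Definition~\ref{def:factorization}, so $x_Lx_1\cdots x_i \in [1,g]$; consecutive entries are distinct because each $x_i$ with $1 \le i \le k$ is nontrivial and the length condition forces $\ell(x_Lx_1\cdots x_{i-1}) < \ell(x_Lx_1\cdots x_i)$. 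The inverse map $\Psi$ takes a nonempty chain $y_0 < y_1 < \cdots < y_m$ in $[1,g]$ to
\[
\Psi(y_0 < \cdots < y_m) = [\, y_0 \ \ y_0^{-1}y_1 \ \ y_1^{-1}y_2 \ \ \cdots \ \ y_{m-1}^{-1}y_m \ \ y_m^{-1}g \,].
\]
The relations $y_{i-1} < y_i$ and $y_m \leq g$ in $P_g$ supply elements $z_i := y_{i-1}^{-1}y_i$ and $z_{m+1} := y_m^{-1}g$ satisfying $\ell(z_i) = \ell(y_i) - \ell(y_{i-1}) > 0$ and $\ell(z_{m+1}) = \ell(g) - \ell(y_m)$; telescoping these equalities yields $\ell(y_0) + \sum_{i=1}^m \ell(z_i) + \ell(z_{m+1}) = \ell(g)$, and the product condition holds by cancellation, so $\Psi$ lands in $\fact(G,g,\Ib)$.

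It is then immediate from the definitions that $\Phi \circ \Psi$ and $\Psi \circ \Phi$ are the identity. For the order, merging $\bx$ at position $i \in \{0,1,\ldots,k\}$ replaces $x_i x_{i+1}$ with their product (using the convention $x_0 = x_L$, $x_{k+1} = x_R$) and this deletes exactly the one element $x_Lx_1\cdots x_i$ from $\Phi(\bx)$ while fixing the rest. Consequently $\bx \leq \by$ in $\fact(G,g,\Ib)$ (reachable from $\by$ by a sequence of merges) iff $\Phi(\bx) \subseteq \Phi(\by)$ as chains, finishing the proof. There is no real obstacle here; the only point requiring care is checking that $\Psi$ produces a bona fide factorization, which is a bookkeeping argument that unpacks the definition of the partial order on $P_g$ iteratively along the chain.
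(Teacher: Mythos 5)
Your proposal is correct and follows essentially the same route as the paper: the same map to partial products, the same explicit inverse $y_0 < \cdots < y_m \mapsto [y_0\ \ y_0^{-1}y_1\ \cdots\ y_{m-1}^{-1}y_m\ \ y_m^{-1}g]$, and the same observation that merges correspond to deleting chain elements. You simply spell out the length-additivity bookkeeping that the paper leaves implicit.
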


\begin{proof}
  Let $f$ be the function which sends the linear factorization
  $[x_L\ x_1\ \cdots\ x_k\ x_R]$ to the chain
  \[x_L \leq x_Lx_1 \leq x_Lx_1x_2 \leq \cdots \leq x_Lx_1\cdots x_k,\]
  noting that $x_0x_1 \cdots x_i \in [1,g]$ for each $i$.  Then $f$
  has an inverse which takes the chain $y_0 < y_1 < \cdots < y_k$ to
  the linear factorization $[y_0\ \ y_0^{-1}y_1\ \cdots\ y_{k-1}^{-1}
    y_k\ \ y_k^{-1}g]$.  Moreover, merging factorizations in
  $\fact(G,g,\Ib)$ corresponds exactly to taking subchains in $[1,g]$,
  so $f$ is an order-preserving bijection with order-preserving
  inverse and thus the two posets are isomorphic.
\end{proof}

\begin{cor}\label{cor:comp-bool}
  $\comp(\Z,n,\Ib)$ is isomorphic to $\bool^*(n+1)$. 
\end{cor}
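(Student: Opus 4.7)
The plan is to apply Proposition~\ref{prop:fact-chain-poset} to the special case $G = \Z$, $X = \{1\}$, $g = n$. That proposition identifies $\comp(\Z,n,\Ib) = \fact(\Z,n,\Ib)$ with the poset of nonempty chains in the interval $[0,n]$, ordered by inclusion.

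Next I would observe that the interval $[0,n]$ in $\Z$, under the partial order induced by $X = \{1\}$, is itself a totally ordered set: it consists of the $n+1$ integers $0 < 1 < 2 < \cdots < n$. Because the ambient poset is already a chain, every nonempty subset of $\{0,1,\ldots,n\}$ is automatically a chain, and conversely every chain is just a nonempty subset. Hence the poset of nonempty chains in $[0,n]$ ordered by inclusion coincides with the poset of nonempty subsets of the $(n+1)$-element set $\{0,1,\ldots,n\}$ ordered by inclusion, which is by definition $\bool^{*}(n+1)$.

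Combining these two identifications yields the desired isomorphism $\comp(\Z,n,\Ib) \cong \bool^{*}(n+1)$. There is no real obstacle in the argument; the only thing to verify is that composing the two bijections preserves order in both directions, which follows immediately from the order-preserving nature of the bijection in Proposition~\ref{prop:fact-chain-poset}. As a sanity check, for $n=2$ the poset $\comp(\Z,2,\Ib)$ displayed in Figure~\ref{fig:weak-comp-2} has exactly seven elements arranged in three ranks, namely one top, three middle, and three bottom, matching the $2^{3}-1 = 7$ nonempty subsets of a three-element set and the rank structure of $\bool^{*}(3)$ (with a uniform shift of $1$ in the rank function).
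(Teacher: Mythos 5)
Your argument is correct and is exactly the intended one: the paper gives no explicit proof, since the corollary follows immediately from Proposition~\ref{prop:fact-chain-poset} once one notes that the interval $[0,n]$ in $\Z$ is a chain with $n+1$ elements, so its nonempty chains are precisely its nonempty subsets. Your sanity check against Figure~\ref{fig:weak-comp-2} is also accurate.
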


By identifying the ends of a linear factorization, we obtain a new object which
we call a circular factorization.

\begin{defn}[Circular factorizations]\label{def:circular-factorization}
  Let $g\in G$. Define an equivalence relation on $\fact(G,g,\Ib)$ by
  declaring $\bx = [x_L\ x_1\ \cdots\ x_k\ x_R]$ and $\by =
  [y_L\ y_1\ \cdots\ y_k\ y_R]$ to be equivalent if and only if $x_i =
  y_i$ for all $i \in \{1,\ldots,k\}$. We refer to the equivalence
  classes as \emph{circular factorizations of $g$}, each of which is
  represented by the unique element with $1$ as its final entry. More
  concretely, the equivalence class of $\bx$ is denoted
  $\overline{\bx} = [gx_Rg^{-1}x_L\ |\ x_1\ \cdots\ x_k\ |\ 1]$, where
  the vertical bars are indicators used to distinguish the first and
  last terms from the rest of the factorization.  Let $\fact(G,g,\Sb)$
  denote the set of all circular factorizations under the partial
  order $\overline{\bx} \leq \overline{\by}$ if $\bx' \leq \by'$ for
  some $\bx' \in \overline{\bx}$ and $\by' \in \overline{\by}$. Define
  the order-preserving surjection $q\colon \fact(G,g,\Ib) \to
  \fact(G,g,\Sb)$ by sending each linear factorization to the
  equivalence class which contains it, i.e. $q(\bx) = \overline{\bx}$.
\end{defn}

\begin{example}[$\fact(\sym_3,\delta,\Sb)$]\label{ex:circ-fact-2}
  When $G = \sym_3$, $X = T = \{a,b,c\}$, and $\delta = (1\ 2\ 3)$ as
  in Example~\ref{ex:absolute-order-interval}, the poset
  $\fact(\sym_3,\delta,\Sb)$ has $8$ elements. See
  Figure~\ref{fig:circ-fact-2}.
\end{example}

\begin{figure}
  \centering
  \begin{tikzpicture}
    \begin{scope}
      \node[RowNode] (a1) at (-3,4) {$[1\ | \ a\ b\ |\ 1]$};
      \node[RowNode] (a2) at (0,4) {$[1\ | \ c\ a\ |\ 1 ]$};
      \node[RowNode] (a3) at (3,4) {$[1\ | \ b\ c\ |\ 1 ]$};
      
      \node[RowNode] (b1) at (-4.5,2) {$[1\ |\ \delta\ |\ 1]$};
      \node[RowNode] (b2) at (-1.5,2) {$[c\ |\ a\ |\ 1]$};
      \node[RowNode] (b3) at (1.5,2) {$[a\ |\ b\ |\ 1]$};
      \node[RowNode] (b4) at (4.5,2) {$[b\ |\ c\ |\ 1]$};
      
      \node[RowNode] (bottom) at (0,0) {$[\delta\ ||\ 1]$};
      
      \draw (a1) -- (b1);
      \draw (a1) -- (b2);
      \draw (a1) -- (b3);
      \draw (a2) -- (b1);
      \draw (a2) -- (b2);
      \draw (a2) -- (b4);
      \draw (a3) -- (b1);
      \draw (a3) -- (b3);
      \draw (a3) -- (b4);
      
      \draw (b1) -- (bottom);
      \draw (b2) -- (bottom);
      \draw (b3) -- (bottom);
      \draw (b4) -- (bottom);
    \end{scope}
  \end{tikzpicture}
  \caption{The poset $\fact(\sym_3,\delta,\Sb)$ of circular
    factorizations of the $3$-cycle $\delta = (1\ 2\ 3)$ with respect
    to the generators $a= (1\ 2)$, $b = (2\ 3)$, and $c = (1\ 3)$.}
    \label{fig:circ-fact-2}
\end{figure}

\begin{example}[$\comp(\Z,n,\Sb)$]\label{ex:circ-comp-2}
  When $G = \Z$ and $X = \{1\}$, we denote the set $\fact(\Z,g,\Sb)$
  by $\comp(\Z,n,\Sb)$ and refer to its elements as \emph{circular
  compositions of $n$}. When $n = 2$, for example, the poset
  $\comp(\Z,2,\Sb)$ of circular compositions of $2$ has four elements
  and is isomorphic to $\bool(2)$ --- see
  Figure~\ref{fig:circ-comp-2}. More generally, it is important to
  note that while there is a rank-preserving bijection between
  $\comp(\Z,n,\Sb)$ and $\bool(n)$, the two are not isomorphic when $n
  > 2$ since there are relations in the former which do not appear in
  the latter. For example, each rank-one element of $\bool(3)$ lies
  beneath exactly two rank-two elements, but the circular composition
  $[1\ |\ 2\ |\ 0]$ lies beneath $[1\ |\ 1\ 1\ |\ 0]$,
  $[0\ |\ 1\ 2\ |\ 0]$ and $[0\ |\ 2\ 1\ |\ 0]$ in $\comp(\Z,3,\Sb)$.
\end{example}

\begin{figure}
  \centering
  \begin{tikzpicture}
    \begin{scope}
      \node[RowNode] (top) at (0,2) {$[0\ |\ 1\ 1\ |\ 0]$};
      
      \node[RowNode] (a1) at (-1.5,0) {$[1\ |\ 1\ |\ 0]$};
      \node[RowNode] (a2) at (1.5,0) {$[0\ |\ 2\ |\ 0]$};
      
      \node[RowNode] (bottom) at (0,-2) {$[2\ | |\ 0]$};
      
      \draw (top) -- (a1);
      \draw (top) -- (a2);
      
      \draw (a1) -- (bottom);
      \draw (a2) -- (bottom);
    \end{scope}
  \end{tikzpicture}
  \caption{The poset $\comp(\Z,2,\Sb)$ of circular compositions of $2$.}
  \label{fig:circ-comp-2}
\end{figure}

We close this section by providing combinatorial connections between
the posets introduced in this section. See
Figure~\ref{fig:compositions-factorizations} for the relevant
commutative diagram.

\begin{defn}[Factorizations to compositions]\label{def:fact-to-comp}
  Let $g\in G$ with $n = \ell(g)$ and define the function $L \colon
  \fact(G,g,\Ib) \to \comp(\Z,n,\Ib)$ by sending the factorization
  $\bx =$ $[x_L\ x_1\ \cdots\ x_k\ x_R]$ to the composition $L(\bx) =$
  $[\ell(x_L)\ \ell(x_1)\ \cdots\ \ell(x_k)\ \ell(x_R)]$.  Similarly,
  define $\overline{L} \colon \fact(G,g,\Sb) \to \comp(\Z,n,\Sb)$ by
  sending $\overline{\bx} =$ $[x_L\ |\ x_1\ \cdots\ x_k\ |\ 1]$ to
  $L(\overline{\bx}) =$
  $[\ell(x_L)\ |\ \ell(x_1)\ \cdots\ \ell(x_k)\ |\ 0]$.  Note that if
  $\bx'$ is obtained from $\bx$ by performing a merge in position $i$,
  then we know $\ell(x_ix_{i+1}) = \ell(x_i) + \ell(x_{i+1})$ by
  Lemma~\ref{lem:interval-factors}, and this means that $L(\bx')$ is
  obtained from $L(\bx)$ by merging at position $i$. By similar
  reasoning, we know that if $\overline{\bx'} \leq \overline{\bx}$,
  then $\overline{L}(\overline{\bx'}) \leq
  \overline{L}(\overline{x})$. Thus $L$ and $\overline{L}$ are both
  order-preserving functions.
\end{defn}

\begin{lem}\label{lem:lower-fact-to-lower-comp}
    Let $\bx \in \fact(G,g,\Ib)$. Then $L$ restricts to an isomorphism
    from the lower set $\low(\bx)$ to the lower set $\low(L(\bx))$.
\end{lem}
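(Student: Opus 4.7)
The plan is to translate both sides of the claimed isomorphism into posets of subchains via Proposition~\ref{prop:fact-chain-poset}, and then verify that $L$ is the natural bijection induced by the length function $\ell$. Under that proposition, the factorization $\bx = [x_L\ x_1\ \cdots\ x_k\ x_R]$ corresponds to the chain $C_\bx = (x_L < x_Lx_1 < \cdots < x_Lx_1\cdots x_k)$ of $k+1$ distinct elements in $P_g = [1,g]$ (distinct because each $x_i$ with $i \geq 1$ is nontrivial), and $\low(\bx)$ becomes the poset of nonempty subchains of $C_\bx$ ordered by inclusion, i.e., a copy of $\bool^*(k+1)$. Applying the same identification in additive notation (equivalently, combining Corollary~\ref{cor:comp-bool} with the analog of Proposition~\ref{prop:fact-chain-poset} for $\Z$), $\low(L(\bx))$ becomes the poset of nonempty subchains of the partial-sum chain $C_{L(\bx)} = (\ell(x_L) < \ell(x_L)+\ell(x_1) < \cdots < \ell(x_L)+\cdots+\ell(x_k))$ in $\{0,1,\ldots,n\}$, which also has $k+1$ distinct elements since $\ell(x_i)\geq 1$ for $i = 1,\ldots,k$.

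Next I would observe that the length function itself furnishes an order-preserving bijection $C_\bx \to C_{L(\bx)}$ sending $x_Lx_1\cdots x_j$ to $\ell(x_L)+\ell(x_1)+\cdots+\ell(x_j)$, where this equals $\ell(x_Lx_1\cdots x_j)$ because $\bx$ is a reduced factorization. This elementwise bijection extends canonically to a rank-preserving isomorphism between the subchain posets on the two sides.

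Finally, I would check that $L$ realizes this subchain isomorphism under the identifications above. Merging $\bx$ at position $i$ corresponds to deleting the element $x_Lx_1\cdots x_i$ from $C_\bx$, while merging $L(\bx)$ at position $i$ deletes $\ell(x_Lx_1\cdots x_i)$ from $C_{L(\bx)}$, and these deletions are compatible under the elementwise bijection $\ell$. Iterating over arbitrary sequences of merges, for any $\by\leq\bx$ the subchain corresponding to $L(\by)$ in $C_{L(\bx)}$ is exactly the image under $\ell$ of the subchain corresponding to $\by$ in $C_\bx$. Combined with the order-preservation of $L$ already noted in Definition~\ref{def:fact-to-comp} and the order-preservation of the inverse (immediate from the subchain picture), this yields the desired poset isomorphism.

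The only substantive technical point is the injectivity of $\ell$ on the elements of $C_\bx$, i.e., strict monotonicity of the partial sums, and this is immediate from the nontriviality of the middle entries built into the definition of linear factorizations; I do not expect any deeper obstacle beyond this verification and the bookkeeping about how merges act on chains.
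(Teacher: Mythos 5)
Your proof is correct and follows essentially the same route as the paper's: both rest on the facts that $L$ commutes with merges and that elements of the lower set are determined by which positions have been merged away. The paper simply asserts that two merge sequences yield the same element of $\low(\bx)$ if and only if the analogous sequences yield the same element of $\low(L(\bx))$; your detour through Proposition~\ref{prop:fact-chain-poset}, identifying both lower sets with the Boolean lattice of nonempty subchains and checking that $\ell$ is injective on the partial products, is just a more explicit justification of that same claim.
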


\begin{proof}
  As described in Definition~\ref{def:fact-to-comp}, each element of
  the lower set $\low(\bx)$ is obtained from $\bx$ via a sequence of
  merges, and the same merges can be performed on $L(\bx)$ to obtain
  an element of $\low(L(\bx))$. Two different sequences of merges
  produce the same element of $\low(\bx)$ if and only if the analogous
  sequence produces the same element of $\low(L(\bx))$, so $L$
  restricts to a bijection and thus an isomorphism.
\end{proof}

Note that Lemma~\ref{lem:lower-fact-to-lower-comp} does not hold for
$\overline{L}$, as it is possible to have elements $\overline{\bx} \in
\fact(G,g,\Sb)$ such that the restriction of $\overline{L}$ to
$\low(\overline{\bx})$ is an order-preserving surjection that is not
injective. Compare Figures~\ref{fig:circ-fact-2}
and~\ref{fig:circ-comp-2} for an example.

\begin{prop}\label{prop:fact-to-comp}
  The functions $L$ and $\overline{L}$ are surjective order-preserving
  maps and $\overline{L}q = qL$.
\end{prop}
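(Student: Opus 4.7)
The plan is to establish three facts: both $L$ and $\overline{L}$ are order-preserving (already noted at the end of Definition~\ref{def:fact-to-comp}), both are surjective, and the square commutes.

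For surjectivity of $L$, I would begin with an arbitrary linear composition $[a_L\ a_1\ \cdots\ a_k\ a_R]$ of $n = \ell(g)$, choose any reduced expression $g = t_1 t_2 \cdots t_n$ with each $t_i \in X$, and group the factors into consecutive blocks of sizes $a_L, a_1, \ldots, a_k, a_R$ to produce a tuple $\bx = [x_L\ x_1\ \cdots\ x_k\ x_R]$ whose product is $g$. By Lemma~\ref{lem:interval-factors} each block product lies in $[1,g]$, and the chain of inequalities
\[
\ell(g) \leq \ell(x_L) + \ell(x_1) + \cdots + \ell(x_k) + \ell(x_R) \leq a_L + a_1 + \cdots + a_k + a_R = n = \ell(g)
\]
forces $\ell(x_i) = a_i$ for every $i$. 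Since $a_i \geq 1$ for $i \in \{1,\ldots,k\}$, each middle entry is nontrivial, so $\bx \in \fact(G,g,\Ib)$ and $L(\bx)$ is the target composition. Surjectivity of $\overline{L}$ then follows formally from surjectivity of $L$, surjectivity of $q$ (which is immediate from its definition), and the commutativity $\overline{L}q = qL$.

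For commutativity, I would evaluate both sides on an arbitrary $\bx = [x_L\ x_1\ \cdots\ x_k\ x_R]$. The right side $q(L(\bx)) = q([\ell(x_L)\ \ell(x_1)\ \cdots\ \ell(x_k)\ \ell(x_R)])$ has canonical representative $[\ell(x_L) + \ell(x_R)\ |\ \ell(x_1)\ \cdots\ \ell(x_k)\ |\ 0]$, while the left side is $\overline{L}([g x_R g^{-1} x_L\ |\ x_1\ \cdots\ x_k\ |\ 1]) = [\ell(g x_R g^{-1} x_L)\ |\ \ell(x_1)\ \cdots\ \ell(x_k)\ |\ 0]$. The two sides agree if and only if $\ell(g x_R g^{-1} x_L) = \ell(x_L) + \ell(x_R)$. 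The upper bound is immediate from conjugacy invariance of $\ell$ (which holds because $X$ is conjugacy-closed) together with the triangle inequality. For the lower bound, a short manipulation using $x_L x_1 \cdots x_k x_R = g$ gives $(g x_R g^{-1} x_L)(x_1 \cdots x_k) = g$, so the triangle inequality yields $\ell(g) \leq \ell(g x_R g^{-1} x_L) + \ell(x_1) + \cdots + \ell(x_k) = \ell(g x_R g^{-1} x_L) + \ell(g) - \ell(x_L) - \ell(x_R)$, which rearranges to the desired inequality. No serious obstacle arises; the only nontrivial step is this length identity for the conjugate $g x_R g^{-1} x_L$, and it reduces to a few lines using conjugacy invariance of $\ell$ and the reduced factorization of $g$.
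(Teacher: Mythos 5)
Your proposal is correct, and its overall skeleton matches the paper's: order-preservation is quoted from Definition~\ref{def:fact-to-comp}, and surjectivity of $\overline{L}$ is deduced formally from surjectivity of $L$ and $q$ together with the commuting square. The two substantive steps are handled differently, though. For surjectivity of $L$, the paper takes a maximal element $\bx$ of $\fact(G,g,\Ib)$, observes that $L(\bx)$ is the maximum of $\comp(\Z,n,\Ib)$, and invokes Lemma~\ref{lem:lower-fact-to-lower-comp} to get an isomorphism $\low(\bx)\cong\comp(\Z,n,\Ib)$; you instead build a preimage of an arbitrary composition by hand, grouping a reduced word for $g$ into blocks of the prescribed sizes and using the squeeze $\ell(g)\leq\sum\ell(x_i)\leq\sum a_i=\ell(g)$. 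These are really the same construction viewed from two angles (merging the maximal factorization down to a given shape is exactly your block-grouping), but your version is self-contained while the paper's reuses an earlier lemma. For commutativity, the paper asserts that it ``follows directly'' from Definition~\ref{def:circular-factorization} and Example~\ref{ex:circ-comp-2}, whereas you actually carry out the verification and isolate the one nontrivial point, namely the identity $\ell(gx_Rg^{-1}x_L)=\ell(x_L)+\ell(x_R)$, which you prove correctly via conjugacy-invariance of $\ell$ and the relation $(gx_Rg^{-1}x_L)(x_1\cdots x_k)=g$. (Equivalently, one can note $gx_Rg^{-1}x_L=g(x_1\cdots x_k)^{-1}$, so its length is $\ell(g)-\ell(x_1)-\cdots-\ell(x_k)$.) Your write-up is therefore more explicit than the paper's at the cost of a little length; there is no gap.
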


\begin{proof}
  By Definition~\ref{def:fact-to-comp}, we know the two maps are
  order-preserving.  Also, if $\bx$ is a maximal element of
  $\fact(G,g,\Ib)$, then $L(\bx) = [0\ 1\ \cdots\ 1\ 0]$, the maximum
  element of $\comp(\Z,n,\Ib)$.  By
  Lemma~\ref{lem:lower-fact-to-lower-comp}, we know that $L$ restricts
  to an isomorphism from $\low(\bx)$ to $\low(L(\bx)) =
  \comp(\Z,n,\Ib)$, so the map $L$ is surjective.  The fact that
  $\overline{L}q = qL$ follows directly from
  Definition~\ref{def:circular-factorization} and
  Example~\ref{ex:circ-comp-2}. Since $q$ and $L$ are surjective, $qL$
  is surjective and $\overline{L}q = qL$ is surjective.  Thus
  $\overline{L}$ must also be surjective.
\end{proof}

\begin{figure}
    \centering
    \begin{tikzpicture}
        \node (a) at (-2,1) {$\fact(G,g,\Ib)$};
        \node (b) at (2,1) {$\comp(\Z,n,\Ib)$};
        \node (c) at (-2,-1) {$\fact(G,g,\Sb)$};
        \node (d) at (2,-1) {$\comp(\Z,n,\Sb)$};
        \draw[->] (a) -- node [above] {$L$} (b);
        \draw[->] (a) -- node [left] {$q$} (c);
        \draw[->] (b) -- node [right] {$q$} (d);
        \draw[->] (c) -- node [above] {$\overline{L}$} (d);
    \end{tikzpicture}
    \caption{Our four key posets fit into a commutative diagram.}
    \label{fig:compositions-factorizations}
\end{figure}

\begin{prop}\label{prop:fact-intervals}
    The poset $\fact(G,g,\Ib)$ is \emph{simplicial}, i.e. each of its
    intervals is isomorphic to a Boolean lattice.
\end{prop}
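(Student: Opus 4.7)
The plan is to leverage Proposition~\ref{prop:fact-chain-poset}, which identifies $\fact(G,g,\Ib)$ with the poset of nonempty chains of $P_g = [1,g]$ ordered by inclusion. Under this isomorphism, merging adjacent entries of a factorization corresponds exactly to deleting elements from the associated chain, so for $\bx \leq \by$ in $\fact(G,g,\Ib)$ the corresponding chains $C_\bx$ and $C_\by$ satisfy $C_\bx \subseteq C_\by$, and the interval $[\bx,\by]$ corresponds to the poset
\[
  \{\, C \text{ a chain in } [1,g] \mid C_\bx \subseteq C \subseteq C_\by \,\}.
\]

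Next I would observe that any subset of a chain $C_\by$ is itself a chain, because being totally ordered is inherited by subsets. Consequently, intermediate chains $C$ with $C_\bx \subseteq C \subseteq C_\by$ are in bijection with subsets $S \subseteq C_\by \setminus C_\bx$ via $S \mapsto C_\bx \cup S$, and this bijection is clearly inclusion-preserving in both directions. Hence the interval $[\bx,\by]$ is isomorphic to $\bool\bigl(C_\by \setminus C_\bx\bigr)$, a Boolean lattice whose rank equals $\ell(\by) - \ell(\bx)$ (counting, say, the number of merges needed to pass from $\by$ to $\bx$).

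There is essentially no obstacle here: the conjugacy-closed hypothesis on $X$ and the length additivity guaranteed by Lemma~\ref{lem:interval-factors} are already packaged into the bijection of Proposition~\ref{prop:fact-chain-poset}, so the result reduces to the elementary observation that intervals in the poset of chains of any poset (ordered by inclusion) are Boolean. The only minor point to verify is that we stay inside the poset of \emph{nonempty} chains, which is immediate because $C_\bx \subseteq C$ forces $C$ to be nonempty whenever $C_\bx$ is.
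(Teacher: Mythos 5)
Your proof is correct, but it follows a different route from the paper's. You work entirely on the chain side of the isomorphism of Proposition~\ref{prop:fact-chain-poset}: an interval $[\bx,\by]$ becomes the set of chains squeezed between $C_\bx$ and $C_\by$, and since any subset of a chain is again a chain, these are exactly the sets $C_\bx \cup S$ with $S \subseteq C_\by \setminus C_\bx$, giving $\bool(C_\by\setminus C_\bx)$ on the nose. The paper instead never mentions chains here: it refines $\by$ to a maximal factorization $\bz$, notes that $[\bx,\by] \subseteq \low(\bz)$, and invokes Lemma~\ref{lem:lower-fact-to-lower-comp} to identify $\low(\bz)$ with $\comp(\Z,n,\Ib) \cong \bool^*(n+1)$, whose intervals are Boolean. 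Your argument is more self-contained and makes the general principle transparent (the chain poset of \emph{any} poset is simplicial, with no role played by the group structure or the conjugacy-closed hypothesis), and it exhibits the interval's atoms explicitly; the paper's version instead recycles the $L$-map machinery already built for other purposes and, as a byproduct, records that the entire lower set of a maximal element is a truncated Boolean lattice. One cosmetic nit: your "$\ell(\by)-\ell(\bx)$" for the rank of the Boolean interval is an abuse of notation, since $\ell$ is defined on group elements rather than factorizations; your parenthetical (the number of merges, i.e. $|C_\by|-|C_\bx|$) is the correct reading.
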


\begin{proof}
  For each $\bx,\by \in \fact(G,g,\Ib)$ with $\bx \leq \by$, choose a
  maximal element $\bz$ with $\bx \leq \by \leq \bz$. Then
  the interval $[\bx, \by]$ is contained within the lower set 
  $\low(\bz)$, which we know by
  Lemma~\ref{lem:lower-fact-to-lower-comp} is isomorphic to
  $\comp(\Z,n,\Ib)$, which is itself a truncated Boolean lattice 
  (Corollary~\ref{cor:comp-bool}). Since every interval of a truncated 
  Boolean lattice is isomorphic to a smaller Boolean lattice, the proof 
  is complete.
\end{proof}

\section{Points and Weighted Factorizations}\label{sec:wt-factors}

In this section, we introduce a way of labeling individual points in
the order complex $O_g$ and the interval complex $K_g$ by adding in
weights. For each $g \in G$, we define $\wfact(G,g,\Ib)$, the space of
weighted linear factorizations of $g$, and $\wfact(G,g,\Sb)$, the
space of weighted circular factorizations of $g$. The points in each
space are defined as weighted versions of poset elements from the
previous section, or as decorated multisets in the interval $\Ib$ or
the circle $\Sb$.

\begin{defn}[$G$-multisets]\label{def:g-multisets}
  Let $S$ be a set and let $G$ be a group. We define a
  \emph{$G$-multiset} on $S$ to be a function $\bx \colon S \to G$
  such that $\bx(s)$ is the identity in $G$ for all but finitely many
  $s\in S$. We denote the set of all $G$-multisets on $S$ by
  $\mult(G,S)$.
\end{defn}

We are interested in eight cases which arise from two choices: $S$ is
either the unit interval $\Ib$ or the circle $\Sb$, and $G$ is a
general group, a Coxeter group $W$, the symmetric group $\sym_d$, or
the integers $\Z$. See Tables~\ref{tbl:linear} and~\ref{tbl:circular}
in the Introduction. The $G$-multisets arising from these cases are
slightly unusual in that they are functions from uncountable sets to
discrete groups which produce nontrivial output for only finitely many
inputs, but the interpretation is natural: one should picture the set
$S$ with a finite number of special points labeled by non-trivial
elements of $G$.

\begin{defn}[$\wfact(G,g,\Ib)$]
  Let $g\in G$. For each $G$-multiset $\bu \colon \Ib \to G$, let $0 =
  s_L < s_1 < \cdots < s_k < s_R = 1$ be such that $\bu$ is nontrivial
  on the set $\{s_1,\ldots,s_k\}$ and trivial on its complement in
  $(0,1)$.  The $G$-multiset $\bu$ may or may not be trivial on $0$
  and $1$.  For each $i$, let $x_i = \bu(s_i)$ and define $P(\bu) =
  [x_L\ x_1\ \cdots\ x_k\ x_R]$; we say that $\bu$ is a \emph{weighted
  linear factorization} of $g$ if $P(\bu)$ is a linear factorization
  of $g$. Note that $P(\bu)$ necessarily has length at least
  $2$. Denote the set of all weighted linear factorizations of $g$ by
  $\wfact(G,g,\Ib)$ and observe that $P$ is a surjective function $P
  \colon \wfact(G,g,\Ib) \onto \fact(G,g,\Ib)$. We will often use the
  convenient shorthand $\bu = 0^{x_L}s_1^{x_1}\cdots s_k^{x_k}1^{x_R}$
  to denote elements of $\wfact(G,g,\Ib)$.
\end{defn}

\begin{figure}
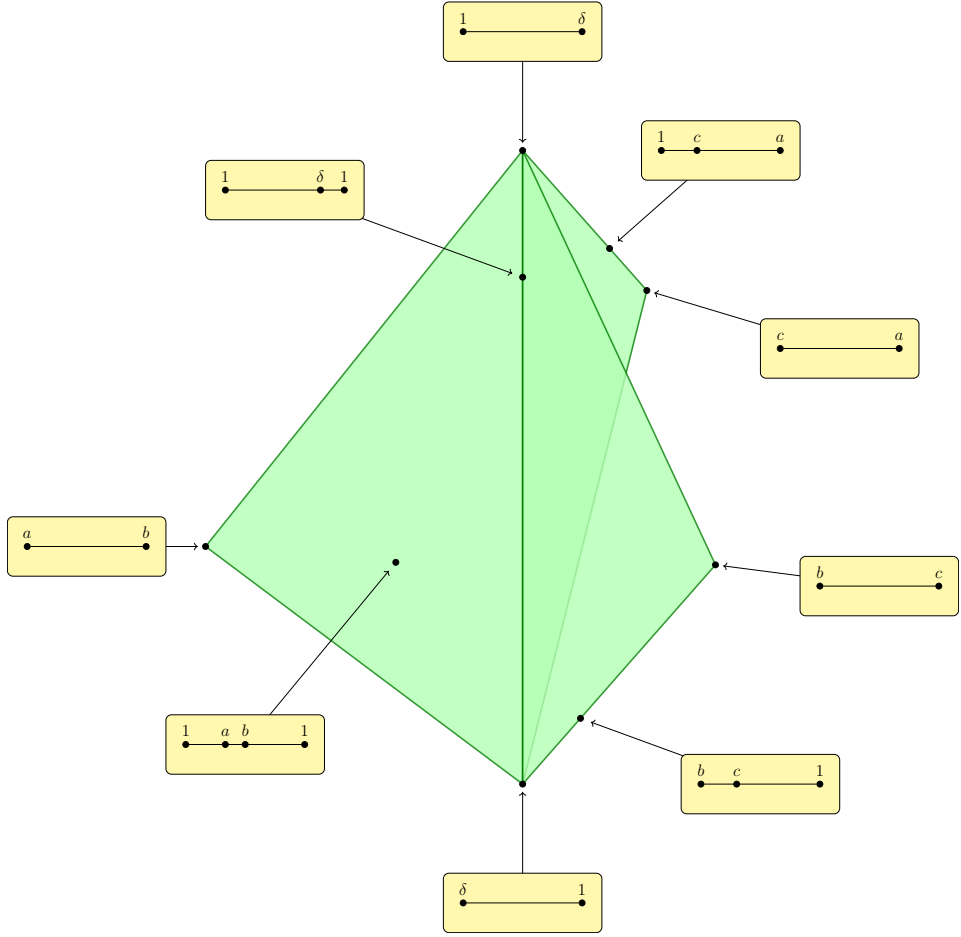

  \centering
  \includestandalone[width=\textwidth]{fig-order-complex}
  \caption{The order complex $O_\delta = \Delta([1,\delta])$ has
    points labeled by elements of $\wfact(\sym_3,\delta,\Ib)$ and
    cells labeled by elements of $\fact(\sym_3,\delta,\Ib)$.}
  \label{fig:weighted-fact-2-interval}
\end{figure}

\begin{example}[$\wfact(\sym_3,\delta,\Ib)$]\label{ex:wfact-g-interval}
  If $G = \sym_3$, $T = \{a,b,c\}$ and $\delta = (1\ 2\ 3)$ as in
  Example~\ref{ex:absolute-order-interval}, then
  $\wfact(\sym_3,\delta,\Ib)$ is a $2$-dimensional simplicial complex
  which consists of three triangles, all sharing a common edge.  The
  $3$ triangles, $7$ edges, and $5$ vertices of $O_\delta =
  \wfact(\sym_3,\delta,\Ib)$ shown in
  Figure~\ref{fig:weighted-fact-2-interval} are labeled by the $15$
  elements of $\fact(\sym_3,\delta,\Ib)$ shown in
  Figure~\ref{fig:weak-fact-2}.
\end{example}

\begin{example}[$\wcomp(\Z,n,\Ib)$]\label{ex:wcomp-n-interval}
  When $G = \Z$ and $X = \{1\}$, we denote the set $\wfact(\Z,n,\Ib)$
  by $\wcomp(\Z,n,\Ib)$ and refer to its elements as \emph{weighted
  linear compositions of $n$}. If we consider the action of $\sym_n$
  on the $n$-cube $\Ib^n$ by permuting coordinates, then each element
  $\bs = 0^{a_L}s_1^{a_1}\cdots s_k^{a_k} 1^{a_R}$ in
  $\wcomp(\Z,n,\Ib)$ can be viewed as a point on the quotient space
  $\Ib^n/\sym_n$, which is isometric to a standard $n$-dimensional
  orthoscheme. For each $\ba \in \comp(\Z,n,\Ib)$, the set of weighted
  linear compositions $\bs$ of $n$ with $P(\bs) = \ba$ forms an open
  face of the standard orthoscheme which we refer to as a
  (non-standard) \emph{orthoscheme of shape $\ba$}.  This recovers
  what we found in Corollary~\ref{cor:comp-bool}: $\comp(\Z,n,\Ib)$ is
  the face poset for the standard $n$-dimensional orthoscheme.
  Moreover, Remark~\ref{rem:orthoscheme-faces} tells us that elements
  $\ba = [a_L\ a_1\ \cdots\ a_k\ a_R]$ and $\bb =
  [b_L\ b_1\ \cdots\ b_k\ b_R]$ in $\comp(\Z,n,\Ib)$ label isometric
  faces of $\wcomp(\Z,n,\Ib)$ if $a_i = b_i$ for all $i \in
  \{1,\ldots,k\}$; see Figure~\ref{fig:mult-3}.
\end{example}

\begin{figure}
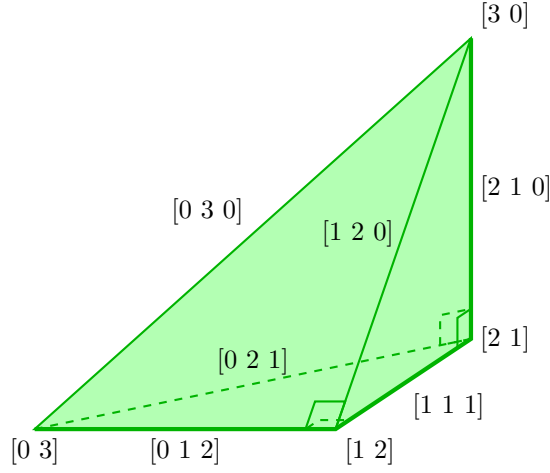

  \centering \includestandalone{fig-orthoscheme}
  \caption{The space $\wcomp(\Z,3,\Ib)$ is a $3$-dimensional
    orthoscheme.  Here we have labeled its vertices and open edges
    by the corresponding elements of $\comp(\Z,3,\Ib)$.} \label{fig:mult-3}
\end{figure}

We can use the orthoscheme metric on $\wcomp(\Z,n,\Ib)$ and an
extension of the map $L$ to provide a piecewise-Euclidean metric for
$\wfact(G,g,\Ib)$.

\begin{defn}[Pullback metric]\label{def:pullback-metric}
  Let $g\in G$ with $\ell(g) = n$. Abusing notation, we define the
  function $L\colon \wfact(G,g,\Ib) \to \wcomp(\Z,n,\Ib)$ by $L(\bu) =
  \ell \circ \bu$ and observe that $PL = LP$, where the second
  occurrence of ``$L$'' denotes the function from $\fact(G,g,\Ib)$ to
  $\comp(\Z,n,\Ib)$ given in Definition~\ref{def:fact-to-comp}.  For
  each $\bx \in \fact(G,g,\Ib)$, the set $P^{-1}(\bx) = \{\bu \in
  \wfact(G,g,\Ib) \mid P(\bu) = \bx\}$ is sent bijectively via $L$ to
  the set $P^{-1}(L(\bx)) = \{\bs \in \wcomp(\Z,n,\Ib) \mid P(\bs) =
  L(\bx)\}$, so we can pull back the metric and identify $P^{-1}(\bx)$
  with an open orthoscheme of shape $L(\bx)$.  By
  Lemma~\ref{lem:lower-fact-to-lower-comp}, we know that the closure
  of this open orthoscheme is indeed the closed orthoscheme we would
  expect, so this endows $\wfact(G,g,\Ib)$ with the structure of a
  piecewise-Euclidean $\Delta$-complex with $\fact(G,g,\Ib)$ as its
  face poset.
\end{defn}

The pullback metric defined in Definition~\ref{def:pullback-metric}
turns the simplicial complex shown in
Figure~\ref{fig:weighted-fact-2-interval} into $3$ isosceles right
triangles with a common hypotenuse. Note that the function $L$ from
$\wfact(\sym_3,\delta,\Ib)$ to $\wcomp(\Z,2,\Ib)$ is a branched
covering map with branch points along the shared hypotenuse.

\begin{prop}\label{prop:wfact-order-complex}
  The order complex $O_g = \Delta([1,g])$ is isometric to the space
  $\wfact(G,g,\Ib)$ of weighted linear factorizations of $g$.
\end{prop}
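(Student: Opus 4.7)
The plan is to construct an explicit cellular isometry $\phi : \wfact(G,g,\Ib) \to O_g$ by matching face posets and corresponding cells.

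First, by Proposition~\ref{prop:fact-chain-poset}, $\fact(G,g,\Ib)$ is isomorphic to the poset of nonempty chains of $P_g = [1,g]$, which by Definition~\ref{def:order-complex} is the face poset of $O_g$.  Under this identification, a factorization $\bx = [x_L\ x_1\ \cdots\ x_k\ x_R]$ corresponds to the open $k$-simplex $\sigma_\bx \subset O_g$ whose closure is spanned by the chain $x_L < x_Lx_1 < \cdots < x_Lx_1\cdots x_k$.  By Definition~\ref{def:pullback-metric}, $\fact(G,g,\Ib)$ is simultaneously the face poset of the piecewise-Euclidean $\Delta$-complex $\wfact(G,g,\Ib)$, with $\bx$ corresponding to the open cell $P^{-1}(\bx)$.

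Next, I would define $\phi$ cell-by-cell.  For each $\bx \in \fact(G,g,\Ib)$, extend the chain $x_L < x_Lx_1 < \cdots < x_Lx_1\cdots x_k$ to a maximal chain $1 = z_0 < z_1 < \cdots < z_n = g$ in $[1,g]$; this selects a maximal simplex $\tau \supset \overline{\sigma_\bx}$ in $O_g$ which, under the orthoscheme metric, is isometric to $\wcomp(\Z,n,\Ib)$ in a way that aligns the vertex orderings.  For $\bu \in P^{-1}(\bx)$, define $\phi(\bu)$ to be the point of $\tau$ corresponding under this isometry to $L(\bu) \in \wcomp(\Z,n,\Ib)$.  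By Remark~\ref{rem:orthoscheme-faces} together with Corollary~\ref{cor:comp-bool}, $\phi(\bu)$ lies in the face of $\tau$ labeled by the composition $L(\bx)$, which is exactly $\sigma_\bx$; since $L$ restricts to an isometry $P^{-1}(\bx) \to P^{-1}(L(\bx))$ by Definition~\ref{def:pullback-metric}, the restriction $\phi|_{P^{-1}(\bx)}$ is an isometry onto $\sigma_\bx$.

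Finally, I would verify that the cell-wise definitions assemble into a well-defined global isometry.  The value $\phi(\bu)$ is independent of the choice of maximal refinement $\tau$, since the orthoscheme coordinates of a point on the face $\sigma_\bx$ are determined entirely by $L(\bu)$ and $L(\bx)$.  For the gluings, when $\bx' \leq \bx$ in $\fact(G,g,\Ib)$, the face $\sigma_{\bx'}$ sits in $\overline{\sigma_\bx}$ by the simplicial structure of $O_g$ and the cell $P^{-1}(\bx')$ sits in $\overline{P^{-1}(\bx)}$ by Lemma~\ref{lem:lower-fact-to-lower-comp}; both inclusions are described by $L$ applied to the interval $[\bx', \bx]$ in the same way, so $\phi$ respects them.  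The main obstacle I anticipate is the bookkeeping needed to track orthoscheme coordinates across different maximal simplices sharing a common face, but both the orthoscheme gluings in $O_g$ and the pullback structure defining $\wfact(G,g,\Ib)$ are governed by the same map $L$, so the gluings agree and $\phi$ is a global isometry.
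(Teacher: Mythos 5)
Your proposal is correct and takes essentially the same route as the paper's (much terser) proof: cells are matched via Proposition~\ref{prop:fact-chain-poset}, and points within each cell are matched by reading off the positions $s_i$ as orthoscheme/barycentric coordinates, which is exactly how Definition~\ref{def:pullback-metric} endows $\wfact(G,g,\Ib)$ with its metric. Your extra verifications---independence of the choice of maximal refinement and compatibility with the face gluings---are details the paper leaves implicit.
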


\begin{proof}
  This follows from
  Proposition~\ref{prop:fact-chain-poset} and
  Example~\ref{ex:wfact-g-interval}. More explicitly, the differences
  between the consecutive real numbers $0 = s_L < s_1 < \cdots < s_k <
  s_R = 1$ are the barycentric coordinates of a point in the corresponding
  open simplex.
\end{proof}

Identifying the endpoints of $\Ib$ to produce the circle $\Sb$ yields
a ``circular'' quotient of $\wfact(G,g,\Ib)$.

\begin{defn}[$\wfact(G,g,\Sb)$]\label{def:weighted-fact-circle}
  The equivalence relation given in
  Definition~\ref{def:circular-factorization} transforms
  $\fact(G,g,\Ib)$ into $\fact(G,g,\Sb)$, and this determines a
  quotient of the cell complex $\wfact(G,g,\Ib)$ by isometrically
  identifying faces.  As described in
  Definition~\ref{def:orthoscheme}, each simplex in the order complex
  comes with an ordering of its vertices which determines the metric,
  and this information determines the gluing orientation.  We refer to
  this quotient as the space of \emph{weighted circular factorizations
  of $g$}, denoted $\wfact(G,g,\Sb)$. Each point $\overline{\bu}$ in
  this space can be viewed as a $G$-multiset $\Sb \to G$ and uniquely
  represented as $\overline{\bu} = 0^{x_L}s_1^{x_1}\cdots
  s_k^{x_k}1^1$, where $q(\overline{\bu})$ is the circular
  factorization $[x_L\ |\ x_1\ \cdots\ x_k\ |\ 1]$.  It follows that
  $\fact(G,g,\Sb)$ is the face poset for $\wfact(G,g,\Sb)$.
\end{defn}

\begin{figure}
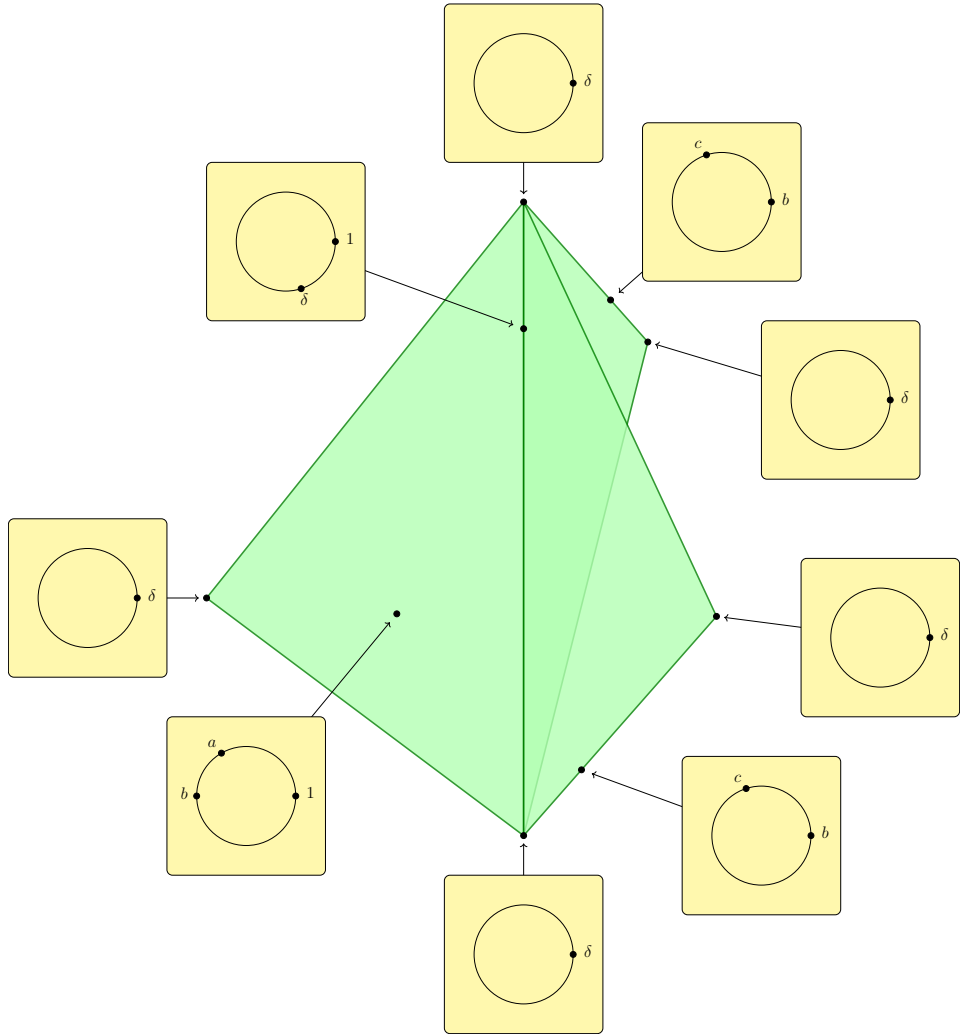

  \centering
  \includestandalone[width=\textwidth]{fig-interval-complex}
  \caption{The interval complex $K_\delta = \wfact(\sym_3,\delta,\Sb)$
    is obtained by gluing the points of the order complex $O_\delta =
    \wfact(\sym_3,\delta,\Ib)$ as described in
    Definition~\ref{def:weighted-fact-circle}.  The order complex is
    shown with its circular labels.  From the circular labeling, one
    can see that all $5$ vertices are identified and the short edges
    are identified in pairs.}
  \label{fig:weighted-fact-2-circle}
\end{figure}

\begin{example}[$\wfact(\sym_3,\delta,\Sb)$]\label{ex:wfact-g-circle}
  If $G = \sym_3$, $T = \{a,b,c\}$ and $\delta = (1\ 2\ 3)$ as in
  Example~\ref{ex:wfact-g-interval}, then $\wfact(\sym_3,\delta,\Sb)$
  is obtained from $\wfact(\sym_3,\delta,\Ib)$ by identifying edges.
  The quotient $K_g = \wfact(\sym_3,\delta,\Sb)$ has $3$ triangles,
  $4$ edges, and $1$ vertex, and these cells are labeled by the $8$
  elements of $\fact(\sym_3,\delta,\Sb)$ shown in
  Figure~\ref{fig:circ-fact-2}. 
  For a depiction of the pointwise labels in $K_g$, see 
  Figure~\ref{fig:weighted-fact-2-circle}. 
\end{example}

\begin{example}[$\wcomp(\Z,n,\Sb)$]
    \label{ex:weighted-comp-circle}
    When $G = \Z$ and $X = \{1\}$, we denote the set
    $\wfact(\Z,n,\Sb)$ by $\wcomp(\Z,n,\Sb)$ and refer to its elements
    as \emph{weighted circular compositions of $n$}.  As discussed in
    Example~\ref{ex:wcomp-n-interval}, $\wcomp(\Z,n,\Ib)$ is isometric
    to a standard $n$-dimensional orthoscheme, so $\wcomp(\Z,n,\Sb)$
    is obtained by identifying faces of an orthoscheme according to
    the map $q \colon \comp(\Z,n,\Ib) \to \comp(\Z,n,\Sb)$ given in
    Definition~\ref{def:circular-factorization}. To give another way
    of viewing this identification, the inequalities $x_1 \leq x_2
    \leq \cdots \leq x_n \leq x_1 + 1$ define a topological subspace
    of $\mathbb{R}^n$ called a \emph{column} which is isometric to the
    product of $\mathbb{R}$ and an $(n-1)$-simplex (more specifically,
    a Coxeter simplex of type $\widetilde{A}_{n-1}$. See
    \cite[Section~8]{BrMc-10} and \cite[Section~8]{dmw20}).  The
    infinite cyclic group generated by the isometry $T:\mathbb{R}^n
    \to \mathbb{R}^n$ given by $T(x_1,x_2,\ldots,x_n) =
    (x_2,\ldots,x_n,x_1+1)$ acts freely on the column and a
    fundamental domain for the action is a standard $n$-dimensional
    orthoscheme. The quotient by this free $\Z$-action is
    $\wcomp(\Z,n,\Sb)$.
\end{example}

\begin{prop}\label{prop:wfact-interval-complex}
  The interval complex $K_g = q(O_g)$ for the interval $P_g=[1,g]$ is
  isometric to the space $\wfact(G,g,\Sb)$ of weighted circular factorizations
  of $g$.
\end{prop}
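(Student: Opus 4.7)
The plan is to upgrade the isometry $O_g \cong \wfact(G,g,\Ib)$ from Proposition~\ref{prop:wfact-order-complex} to an isometry of their respective quotients, by verifying that the identification rules defining $K_g$ and $\wfact(G,g,\Sb)$ agree under the underlying bijection of cells. Recall that $K_g$ is obtained from $O_g$ by identifying simplices labeled by chains $y_0 < \cdots < y_k$ and $z_0 < \cdots < z_k$ exactly when $y_{i-1}^{-1} y_i = z_{i-1}^{-1} z_i$ for every $i$, whereas $\wfact(G,g,\Sb)$ is obtained from $\wfact(G,g,\Ib)$ by applying the analogous relation on $\fact(G,g,\Ib)$ from Definition~\ref{def:circular-factorization}, namely equality of all middle entries $x_i$.

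The first step is to unpack the correspondence from Proposition~\ref{prop:fact-chain-poset}: the chain $y_0 < \cdots < y_k$ is matched with the linear factorization whose middle entries are precisely $x_i = y_{i-1}^{-1} y_i$. Under this correspondence the two equivalence relations coincide term-by-term, so the underlying set-theoretic quotients are in natural bijection.

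The second step is to confirm that the gluings respect the orthoscheme metrics. By Definition~\ref{def:pullback-metric} and Remark~\ref{rem:orthoscheme-faces}, the metric on each open cell depends only on the length sequence $(\ell(x_1), \ldots, \ell(x_k))$ of the middle entries, and the orientation of each face is recorded by the ordering of the sample points $s_1 < \cdots < s_k$ in $\Ib$, an ordering which is preserved on both sides of the identification. Hence corresponding open cells in $K_g$ and $\wfact(G,g,\Sb)$ are glued via the same orientation-preserving isometries.

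Putting these pieces together, the isometry from Proposition~\ref{prop:wfact-order-complex} descends to a continuous bijection $\overline{\Phi}\colon K_g \to \wfact(G,g,\Sb)$ that restricts to an isometry on each closed cell, and therefore is itself an isometry. The proof is largely bookkeeping; the only real obstacle is the careful alignment of the two equivalence relations, but this has essentially been engineered into Definitions~\ref{def:dual-braid-complex} and~\ref{def:weighted-fact-circle}, so no new ideas beyond those already established in the previous section should be required.
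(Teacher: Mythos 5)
Your proposal is correct and follows essentially the same route as the paper's proof: both use the chain-to-factorization correspondence of Proposition~\ref{prop:fact-chain-poset} to match the defining identification of $K_g$ with the equivalence relation of Definition~\ref{def:circular-factorization}, and then descend the isometry of Proposition~\ref{prop:wfact-order-complex} to the quotients. Your second step, spelling out why the gluings are isometric via the length sequences and the ordering of the sample points, is slightly more explicit than the paper but adds nothing essentially new.
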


\begin{proof}
  The interval complex $K_g$ for $[1,g]$ is obtained from the order
  complex by identifying the faces labeled by chains $x_0 < \cdots <
  x_k$ and $y_0 < \cdots < y_k$ if and only if $x_{i-1}^{-1}x_i =
  y_{i-1}^{-1}y_i$ for all $i \in \{1,\ldots,k\}$.  By
  Proposition~\ref{prop:fact-chain-poset}, these faces are labeled by
  the linear factorizations
    \[[x_L\ \ x_L^{-1}x_1\ \cdots\ x_{k-1}^{-1} x_k\ \ x_k^{-1}g]\]
    and
    \[[y_L\ \ y_L^{-1}y_1\ \cdots\ y_{k-1}^{-1} y_k\ \ y_k^{-1}g],\]
    so the identification of faces in constructing the interval
    complex is identical to the equivalence relation established on
    $\fact(G,g,\Ib)$ when defining $\fact(G,g,\Sb)$. Since this
    equivalence relation dictates the identification of faces in
    $\wfact(G,g,\Ib)$ when constructing $\wfact(G,g,\Sb)$ and
    $\wfact(G,g,\Ib)$ is isometric to the ordered simplicial complex
    $O_g=\Delta([1,g])$ by Proposition~\ref{prop:wfact-order-complex},
    the proof is complete.
\end{proof}

\section{Topological Posets}\label{sec:top-posets}

In this section we define two graded topological posets: a linear
version $\Fc(G,g,\Ib)$, which contains the weighted linear
factorizations of all elements $h$ in $P_g = [1,g]$, and a circular
version $\Fc(G,g,\Sb)$, which is a quotient of the linear version.  We
also prove Theorems~\ref{mainthm:max-elements},
\ref{mainthm:dual-complex}, and~\ref{mainthm:upper-sets}.  We
begin by recalling the definition.

\begin{defn}[Topological posets]\label{def:top-poset}
 A \emph{topological poset} is a poset $\mathcal{P}$ with a Hausdorff
 topology $\tau$ such that the order relation $R := \{(p, q) \in
 \mathcal{P} \times \mathcal{P} \mid p \leq q\}$ is a closed subspace
 of $\mathcal{P} \times \mathcal{P}$.  Morphisms are continuous poset
 maps.
\end{defn}

This definition was given by Rade \v{Z}ivaljevi\'c \cite{zivaljevic98}
inspired by the work of Vasiliev \cite{vasiliev91}.  The idea of
combining posets and topology goes back to Folkman \cite{folkman66}
and Bj\"{o}rner \cite{bjorner95} has a survey of the area from the
mid-1990s.  For most natural examples, including the two given below,
establishing the technical condition in Definition~\ref{def:top-poset}
is straightforward, and it is an exercise best left to the reader.

The linear topological poset $\Fc(G,g,\Ib)$ is easy to define and we
establish its properties before moving on to the more complicated
circular version $\Fc(G,g,\Sb)$.

\begin{defn}[$\Fc(G,g,\Ib)$]\label{def:fact-top-poset}
  For each $g\in G$, define $\Fc(G,g,\Ib)$ to be the disjoint union of
  topological spaces
  \[
  \bigsqcup_{h\in [1,g]} \wfact(G,h,\Ib).
  \]
  This space comes with a partial order: for $\bu,\bv \in
  \Fc(G,g,\Ib)$, we say that $\bv$ is a \emph{subfactorization} of
  $\bu$ and write $\bv \subseteq \bu$ if $\bv(r) \leq \bu(r)$ in $G$
  for all $r\in \Ib$. Define $\rho \colon \Fc(G,g,\Ib) \to G$ by $\rho
  (\bu) = \prod_{r\in [0,1]} \bu(r)$, where the factors are arranged
  from left to right in increasing order of the real numbers $r$. Note
  that this uncountable product is well-defined since all but finitely
  many factors are trivial.  The topological poset $\Fc(G,g,\Ib)$ is a
  graded poset of height $\ell(g)$ with rank function $\ell\circ\rho$.
\end{defn}

\begin{figure}
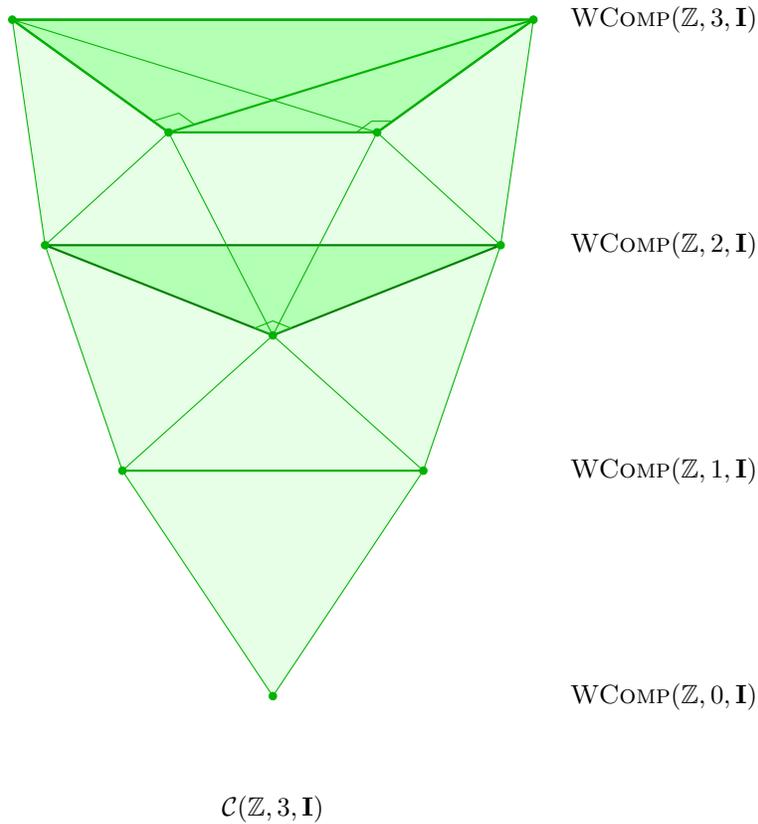

  \centering
  \includestandalone{fig-top-graded-poset2}
  \caption{In the graded topological poset $\Cc(\Z,3,\Ib)$, 
    the elements of rank $k$ have the metric topology of the standard 
    $k$-dimensional orthoscheme $\wcomp(\Z,k,\Ib)$.
    \label{fig:top-graded-poset}}
\end{figure}

We illustrate this construction with $G=\Z$ and $g=n=3$ since this is
one of the few examples where one can sketch the full poset.

\begin{example}\label{ex:subcompositions}
  When $G = \Z$ and $X = \{1\}$ we denote $\Fc(\Z,n,\Ib)$ by
  $\Cc(\Z,n,\Ib)$ and refer to subfactorizations as
  \emph{subcompositions}. The elements of rank $k$ in $\Cc(\Z,n,\Ib)$
  are precisely those in $\wcomp(\Z,k,\Ib)$ by definition. Each point
  is labeled by an element of $\wcomp(\Z,k,\Ib)$, i.e. a $k$-element
  multiset in the interval, and each cell is labeled by an element of
  $\comp(\Z,k,\Ib)$, the list of multiplicities in their natural linear
  order.  In particular, the elements in each rank form a standard
  metric orthoscheme.  See Figure~\ref{fig:top-graded-poset} for an
  illustration when $n = 3$.  Note that there is a stark asymmetry in
  this ordering.  For any element of rank $k$ with $0 < k < n$, there
  is only a finite number of elements below it in rank $k-1$
  (obtained by removing one of the elements of this $k$-element
  multiset), and there is a continuum of elements above it in rank
  $k+1$ (obtained by adding a random real number in $\Ib$ to the
  multiset).
\end{example}

For each $h \in P_g = [1,g]$, there is an $h'$ such that $h \cdot h' =
g$ and $[1\ h\ h'\ 1]$ is a linear factorization of $g$.  The ideal or
lower set $\low(h)$ of elements below $h$ in $P_g$ is isomorphic to
the interval $P_h$ and the filter or upper set $\upper(h)$ of elements
above $h$ in $P_g$ is isomorphic to $P_{h'}$.  In the linear
topological poset $\Fc(G,g,\Ib)$ (Definition~\ref{def:fact-top-poset}),
lower sets are also products of intervals inside $P_g$.

\begin{rem}[Lower sets]\label{rem:lower-sets}
  It is straightforward to compute the ideal or lower set of an
  element in $\Fc(G,g,\Ib)$. For example, if $\bu \in \Fc(G,g,\Ib)$
  with $\bu = 0^{x_L}s_1^{x_1}\cdots s_k^{x_k}1^{x_R}$, then the lower
  set $\low(\bu)$ is isomorphic to the direct product $P_{x_L} \times
  P_{x_1} \times \cdots \times P_{x_k} \times P_{x_R}$.  This is
  because each new exponent must be below the old exponent in the
  ordering of $P_g$ and these choices are independent.
\end{rem}

\begin{example}
  If $\bs = 0^{a_L}s_1^{a_1}\cdots s_k^{a_k}1^{a_R}$ is a multiset in
  $\Cc(\Z,n,\Ib)$, then its lower set $\low(\bs)$ is isomorphic to the
  product of chain posets with lengths $a_L, a_1,\ldots,a_k,a_R$.  See
  Figure~\ref{fig:lower-poset} for an illustration for the $5$ element
  multiset $\bs = 0^0 s^2 t^3 1^0$ with $0 < s < t < 1$. Note that
  when $\bs = 0^1s_1^1\cdots s_k^11^1$, this means that the lower set
  $\low(\bs)$ is isomorphic to the Boolean lattice $\bool(k+2)$.
\end{example}

\begin{figure}
  \centering
  \begin{tikzpicture}
    \begin{scope}
      \node[RowNode] (top) at (3,3) {$s^2 t^3$};
      
      \node[RowNode] (a1) at (2,1.5) {$s^1 t^2$};
      \node[RowNode] (a2) at (4,1.5) {$s^2 t^2$};
      
      \node[RowNode] (b1) at (1,0) {$s^0 t^3$};
      \node[RowNode] (b2) at (3,0) {$s^1 t^2$};
      \node[RowNode] (b3) at (5,0) {$s^2 t^1$};
      
      \node[RowNode] (c1) at (2,-1.5) {$s^0 t^2$};
      \node[RowNode] (c2) at (4,-1.5) {$s^1 t^1$};
      \node[RowNode] (c3) at (6,-1.5) {$s^2 t^0$};
      
      \node[RowNode] (d1) at (3,-3) {$s^0 t^1$};
      \node[RowNode] (d2) at (5,-3) {$s^1 t^0$};
      
      \node[RowNode] (bottom) at (4,-4.5) {$s^0 t^0$};
      
      \draw (top) -- (a1);
      \draw (top) -- (a2);
      
      \draw (a1) -- (b1);
      \draw (a1) -- (b2);
      \draw (a2) -- (b2);
      \draw (a2) -- (b3);
      
      \draw (b1) -- (c1);
      \draw (b2) -- (c1);
      \draw (b2) -- (c2);
      \draw (b3) -- (c2);
      \draw (b3) -- (c3);
      
      \draw (c1) -- (d1);
      \draw (c2) -- (d1);
      \draw (c2) -- (d2);
      \draw (c3) -- (d2);
      
      \draw (d1) -- (bottom);
      \draw (d2) -- (bottom);
    \end{scope}
  \end{tikzpicture}
  \caption{The lower set $\low(\bs)$ in $\Cc(\Z,5,\Ib)$ for the
    $5$-element multiset $\bs = 0^0 s^2 t^3 1^0$.  In the figure, the
    unused initial $0$ and final $1$ have been
    suppressed.}\label{fig:lower-poset}
\end{figure}

Since lower sets in $\Fc(G,g,\Ib)$ are products of subintervals in
$P_g$, these are discrete when $G$ is discrete and finite when $G$ is
finite. Upper sets in $\Fc(G,g,\Ib)$, on the other hand, are
uncountable, but they have a familiar structure.  To characterize
upper sets in $\Fc(G,g,\Ib)$ we define operations on multisets and
prove a technical lemma.

\begin{defn}[Multiset operations]\label{def:multiset-op}
  Given $G$-multisets $\bu\colon \Ib \to G$ and $\bv\colon \Ib \to G$,
  define the \emph{product $G$-multiset} $\bu\bv$ by the formula
  $(\bu\bv)(r) = \bu(r)\bv(r)$ for all $r \in \Ib$ and the
  \emph{inverse $G$-multiset} $\bu^{-1}$ by the formula $\bu^{-1}(r) =
  (\bu(r))^{-1}$ for all $r \in \Ib$.
\end{defn}

Note that if $\bu,\bv \in \Fc(G,g,\Ib)$ with $\bv \subseteq \bu$, then
$\bv^{-1}\bu$ is a $G$-multiset such that $\bv(\bv^{-1}\bu) = \bu$,
and by Lemma~\ref{lem:interval-factors}, $\bv^{-1}\bu$ is also an
element of $\Fc(G,g,\Ib)$.

\begin{lem}\label{lem:multiset-division-order}
  Let $\bu, \bv, \bw \in \Fc(G,g,\Ib)$ with $\bv \subseteq \bu$ and
  $\bv\subseteq \bw$.  Then $\bu \subseteq \bw$ if and only if
  $\bv^{-1}\bu \subseteq \bv^{-1}\bw$.
\end{lem}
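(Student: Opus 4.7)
The plan is to observe that the subfactorization relation $\subseteq$ on $\Fc(G,g,\Ib)$ is defined pointwise on $\Ib$, and therefore the entire lemma reduces to a purely algebraic statement in $G$. Specifically, for each $r \in \Ib$ we set $v = \bv(r)$, $u = \bu(r)$, $w = \bw(r)$, which by hypothesis satisfy $v \leq u$ and $v \leq w$ in the interval poset; the conclusion becomes the claim that $u \leq w$ if and only if $v^{-1}u \leq v^{-1}w$. Since $\bv^{-1}\bu$ and $\bv^{-1}\bw$ were just defined pointwise, once this algebraic biconditional is established the lemma follows by universally quantifying over $r$.

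First I would prove the forward direction. Assume $u \leq w$, so that $w = u \cdot (u^{-1}w)$ with $\ell(u) + \ell(u^{-1}w) = \ell(w)$. Since $v \leq u$, we also have $u = v \cdot (v^{-1}u)$ with $\ell(v) + \ell(v^{-1}u) = \ell(u)$, and since $v \leq w$, we have $w = v \cdot (v^{-1}w)$ with $\ell(v) + \ell(v^{-1}w) = \ell(w)$. Substituting the factorization of $u$ into that of $w$ gives $w = v \cdot (v^{-1}u) \cdot (u^{-1}w)$ and hence $v^{-1}w = (v^{-1}u)(u^{-1}w)$. Combining the three length equalities cancels $\ell(v)$ and yields $\ell(v^{-1}u) + \ell(u^{-1}w) = \ell(v^{-1}w)$, which by definition means $v^{-1}u \leq v^{-1}w$.

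For the reverse direction, assume $v^{-1}u \leq v^{-1}w$, so that $v^{-1}w = (v^{-1}u) \cdot x$ for some $x \in \mon(X)$ with $\ell(v^{-1}u) + \ell(x) = \ell(v^{-1}w)$. A direct calculation shows $x = (v^{-1}u)^{-1}(v^{-1}w) = u^{-1}w$. Then $w = v \cdot v^{-1}w = v \cdot (v^{-1}u) \cdot (u^{-1}w) = u \cdot (u^{-1}w)$, and adding $\ell(v)$ to both sides of the length equation for $v^{-1}w$ gives $\ell(u) + \ell(u^{-1}w) = \ell(w)$. Thus $u \leq w$ by definition.

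I do not anticipate a serious obstacle here. The only subtlety to watch is that $v^{-1}u$ and $v^{-1}w$ really are well-defined elements of $\mon(X)$ (so that writing ``$\bv^{-1}\bu$'' is justified pointwise), but this is exactly guaranteed by $v \leq u$ and $v \leq w$ in the interval poset, and the remark preceding the lemma already records that $\bv^{-1}\bu \in \Fc(G,g,\Ib)$ via Lemma~\ref{lem:interval-factors}. Everything else is a bookkeeping exercise in the defining length identity for the partial order.
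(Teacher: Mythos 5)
Your proposal is correct and follows essentially the same route as the paper's proof: both reduce the claim pointwise over $r \in \Ib$ and then verify the biconditional by manipulating the defining length identity $\ell(h) + \ell(h^{-1}k) = \ell(k)$, using that $(v^{-1}u)^{-1}(v^{-1}w) = u^{-1}w$ so the terms $\ell(v)$ cancel. The paper compresses the two directions into a single chain of equivalences, but the substance is identical, and your explicit attention to $u^{-1}w$ lying in $\mon(X)$ is a reasonable (if minor) extra check.
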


\begin{proof}
  Since $\bv \subseteq \bu$ and $\bv \subseteq \bw$, we know that
  $\ell(\bv^{-1}(r)\bu(r)) = \ell(\bu(r)) - \ell(\bv(r))$ and
  $\ell(\bv^{-1}(r)\bw(r)) = \ell(\bw(r)) - \ell(\bv(r))$.  By
  definition, $\bv^{-1}\bu \subseteq \bv^{-1}\bw$ if and only if
  $\bv^{-1}(r)\bu(r) \leq \bv^{-1}(r)\bw(r)$ for all $r \in \Ib$,
  which is equivalent to saying that
  \[
  \ell(\bv^{-1}(r)\bu(r)) + \ell(\bu^{-1}(r)\bw(r)) = \ell(\bv^{-1}(r)\bw(r)).
  \]
  Plugging in, we find that $\ell(\bu(r)) + \ell(\bu^{-1}(r)\bw(r)) =
  \ell(\bw(r))$, which means that $\bu(r) \leq \bw(r)$ for all $r\in
  \Ib$, i.e. $\bu \subseteq \bw$.
\end{proof}

We now use Lemma~\ref{lem:multiset-division-order} to completely
characterize upper sets in $\Fc(G,g,\Ib)$.

\begin{thm}\label{thm:interval-lower-upper} 
  Let $\bv \in \Fc(G,g,\Ib)$ with $\rho(\bv) = h$ and let $h'$ be the
  element in $P_g$ with $h \cdot h' = g$.  The upper set $\upper(\bv)$
  is isomorphic to the topological poset $\Fc(G,h',\Ib)$.  As a
  consequence, the maximal elements in $\upper(\bv)$ form a subspace
  homeomorphic to the order complex $O_{h'} = \Delta(P_{h'})$.
\end{thm}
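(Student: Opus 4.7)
I would construct an explicit bijection $\Phi\colon \upper(\bv)\to\Fc(G,h',\Ib)$ and verify it is a topological poset isomorphism. The naive choice $\bu\mapsto\bv^{-1}\bu$ fails in the non-abelian setting because $\rho(\bv^{-1}\bu)$ need not lie in $[1,h']$. Instead, I use a Hurwitz-type conjugation to move $\bv$'s contribution past $\bu$'s to the left. For each position $r\in\Ib$ set $w(r)=\bv(r)^{-1}\bu(r)$ and let $V_r=\prod_{r'>r}\bv(r')$ denote the product of $\bv$'s values at strictly greater positions, taken in positional order; note that $V_r$ is locally constant on $\Ib$ away from the support of $\bv$. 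Define
\[
\Phi(\bu)(r) \;=\; V_r^{-1}\,w(r)\,V_r,
\qquad
\Psi(\bw)(r) \;=\; \bv(r)\cdot V_r\,\bw(r)\,V_r^{-1}.
\]

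The first step is a direct telescoping calculation using the identity $V_iV_{i+1}^{-1}=v_{i+1}$, which yields $\rho(\bu)=h\cdot\rho(\Phi(\bu))$ as a length-additive identity in $G$. Since $\rho(\bu)\le g=h\cdot h'$ is length-additive and $X$ is conjugacy-closed (so conjugation preserves $\ell$), we obtain $\rho(\Phi(\bu))\le h'$ length-additively, and a length count shows the pointwise factor lengths of $\Phi(\bu)$ sum correctly, placing $\Phi(\bu)$ in $\Fc(G,h',\Ib)$. The same telescoping applied to $\Psi(\bw)$ gives $\rho(\Psi(\bw))=h\cdot\rho(\bw)\le g$; combining this with the triangle inequality $\ell(\Psi(\bw)(r))\le\ell(\bv(r))+\ell(\bw(r))$ at each $r$ and summing, equality is forced pointwise via Lemma~\ref{lem:interval-factors}, so $\Psi(\bw)\in\Fc(G,g,\Ib)$ with $\bv\subseteq\Psi(\bw)$. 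The formulas then immediately yield $\Phi\circ\Psi=\mathrm{id}$ and $\Psi\circ\Phi=\mathrm{id}$.

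Order-preservation follows from Lemma~\ref{lem:multiset-division-order}: if $\bu\subseteq\bu'$ then $w(r)\le w'(r)$ pointwise, and conjugation-invariance of the partial order on $[1,g]$ (a direct consequence of conjugacy-closure of $X$) gives $\Phi(\bu)(r)\le\Phi(\bu')(r)$, hence $\Phi(\bu)\subseteq\Phi(\bu')$; the reverse implication is symmetric via $\Psi$. Topologically, $\Phi$ acts on each stratum $\wfact(G,h'',\Ib)\cap\upper(\bv)$ for $h''\in[h,g]$ by pointwise conjugation using the locally constant function $V_r$, mapping homeomorphically onto the stratum $\wfact(G,h^{-1}h'',\Ib)$ of $\Fc(G,h',\Ib)$. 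The consequence about maximal elements is then immediate: the maximal elements of $\Fc(G,h',\Ib)$ form $\wfact(G,h',\Ib)$, which is homeomorphic to $O_{h'}$ by Proposition~\ref{prop:wfact-order-complex}. The main technical obstacle is the pointwise length-additivity needed to place $\Psi(\bw)$ in $\Fc(G,g,\Ib)$: the global length identity only gives equality after summing, and promoting it to each individual position requires the triangle inequality together with conjugacy-closure of $X$.
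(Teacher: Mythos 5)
Your proposal is correct and follows essentially the same route as the paper: the maps $\Phi$ and $\Psi$ are exactly the paper's $\phi$ and $\phi^{-1}$ (the Hurwitz-style conjugation by the suffix products $\prod_{s>r}\bv(s)$), with order-preservation from Lemma~\ref{lem:multiset-division-order} plus conjugation-invariance, and the maximal-element consequence from Proposition~\ref{prop:wfact-order-complex}. You simply spell out more carefully the length-additivity checks that the paper leaves implicit.
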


\begin{proof}
  For the first claim, note that for each $r \in I$, the product
  $\prod_{s>r}\bv(s)$, in which the factors are arranged left to right
  in increasing order of $s$, is a well-defined element of $P_g$ since
  there are only finitely many $s\in I$ such that $\bv(s)$ is
  nontrivial. Using this, we define $\phi \colon \upper(\bv) \to
  \Fc(G,h',\Ib)$ by
  \[\phi(\bu) = \left(\prod_{s>r} \bv(s) \right)^{-1} (\bv^{-1}\bu)(r)
  \left(\prod_{s>r} \bv(s) \right).\] If $\rho(\bu) = h''$, then we
  can see by definition that $\rho(\phi(\bu)) = h^{-1}h''$ and
  therefore we do indeed have $\phi(\bu) \in \Fc(G,h',\Ib) =
  \Fc(G,h^{-1}g,\Ib)$.  This function is a bijection with inverse
  $\phi^{-1}\colon \Fc(G,h^{-1}g,\Ib) \to \upper(\bv)$ given
  by \[\phi^{-1}(\bw) = \bv(r) \left(\prod_{s>r} \bv(s) \right) \bw(r)
  \left( \prod_{s>r} \bv(s) \right)^{-1}.\] By
  Lemma~\ref{lem:multiset-division-order} and the fact that
  conjugation preserves the partial order on $G$, both $\phi$ and
  $\phi^{-1}$ are order-preserving maps. Therefore, $\phi$ is a poset
  isomorphism. By Proposition~\ref{prop:wfact-order-complex}, the
  maximal elements in $\Fc(G,h',\Ib)$ form a subspace homeomorphic to
  the order complex $O_{h'} =\wfact(G,h',\Ib)$, proving the second
  claim.
\end{proof}

To describe this another way, we can write $\bu = \bv(\bv^{-1}\bu)$
and then deform $\bu$ by pushing the terms appearing from $\bv$ to the
left end of the interval, conjugating the elements of $\bv^{-1}\bu$
along the way to preserve the fact that the product is $h''$. The
weighted factorization obtained by applying these conjugations to
$\bv^{-1}\bu$ is what we call $\phi(\bu)$, and the effect on the group
elements is an example of a \emph{Hurwitz move}. Viewing this as a 
continuous deformation makes clear that $\phi$ is not just an 
isomorphism, but an isometry as well. 

\begin{example}
  Let $\bs = [0^0 s^1 1^0]$ be a multiset of size $1$ in the unit
  interval and an element of $\Cc(\Z,3,\Ib)$.  The upper set
  $\upper(\bs)$ is the collection of multisets of size at most $3$ in
  $\Ib$ which contains $s$ as one of its elements.  By
  Theorem~\ref{thm:interval-lower-upper}, this is a subposet
  isomorphic (as a topological poset) to $\Cc(\Z,2,\Ib)$.  This is
  illustrated in Figure~\ref{fig:top-graded-upset}.  The bending of
  the image under the embedding is caused by the inequalities between
  the one or two new multiset elements and the original multiset
  element $s$.
\end{example}

\begin{figure}
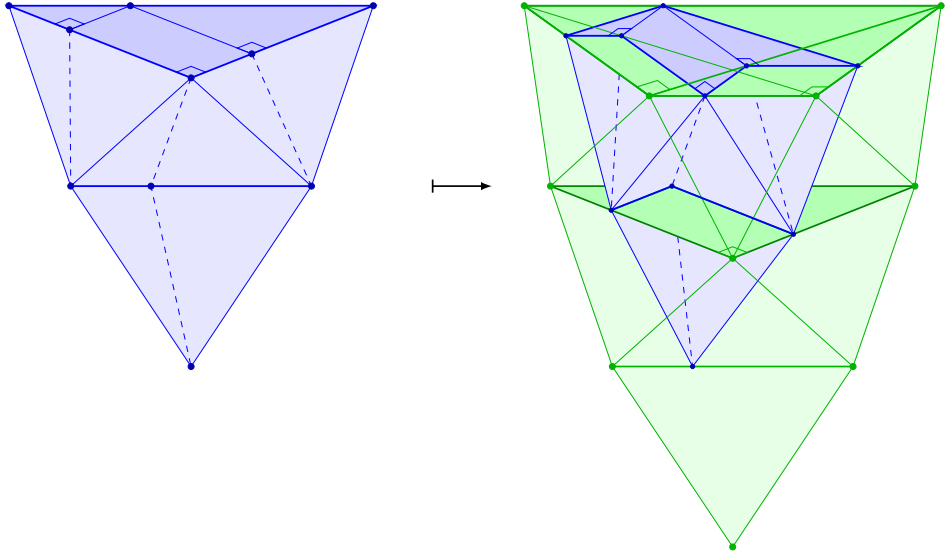

  \centering \includestandalone[scale=.8]{fig-top-graded-upset}
  \caption{In $\Cc(\Z,3,\Ib)$, the upper set of a rank-$1$ element
    (i.e. a multiset of size $1$ in the unit interval) is a copy of
    $\Cc(\Z,2,\Ib)$. The bending of the image is caused by the inequalities
    between the new multiset elements and the original multiset
    element.} \label{fig:top-graded-upset}
\end{figure}


We now turn our attention to the more complicated circular version.

\begin{defn}[$\Fc(G,g,\Sb)$]\label{def:fact-top-poset-circ}
  Define an equivalence relation on $\Fc(G,g,\Ib)$ by declaring $\bu =
  0^{x_L} s_1^{x_1} \cdots s_k^{x_k} 1^{x_R}$ and $\bv = 0^{y_L}
  s_1^{y_1} \cdots s_k^{y_k} 1^{y_R}$ to be equivalent if $x_i = y_i$ for all $i \in
  \{1,\ldots,k\}$ and $h=gx_Rg^{-1}x_L = gy_Rg^{-1}y_L$. We denote the
  set of all such equivalence classes $[\bu]$ by $\Fc(G,g,\Sb)$ and
  observe that it inherits the partial order by subfactorizations from
  $\Fc(G,g,\Ib)$. In particular, each equivalence class $[\bu]$ can be
  uniquely represented by an element of the form $\bw = 0^{z_L}
  s_1^{z_1} \cdots s_k^{z_k} 1^{1}$, and applying $\ell \circ \rho$ to
  these representatives provides a rank function for $\Fc(G,g,\Sb)$.
  Finally, note that the quotient map $q \colon \Fc(G,g,\Ib) \to
  \Fc(G,g,\Sb)$ given by $q(0^{x_L} s_1^{x_1} \cdots s_k^{x_k}
  1^{x_R}) = 0^{h} s_1^{x_1} \cdots s_k^{x_k} 1^{1}$ is an
  order-preserving surjection.
\end{defn}

\begin{rem}\label{rem:not-Kh-union}
  Caution is required when working with $\Fc(G,g,\Sb)$. It is quite
  possible that $\bu$ and $\bv$ belong to the same equivalence class
  even when $\rho(\bu) \neq \rho(\bv)$. To give a small example, the
  weighted linear factorizations $[1\ a]$ and $[gag^{-1}\ 1]$ belong
  to the same equivalence class, but will have different values of
  $\rho$ if $a$ and $g$ do not commute.  In particular, the identified
  factorizations may belong to distinct order complexes
  $O_{\rho(\bu)}$ and $O_{\rho(\bv)}$.  This is why the rank function
  needed to be defined carefully.  And this also shows that the clean
  definition of $\Fc(G,g,\Ib)$ as a disjoint union of spaces $O_h =
  \wfact(G,h,\Ib)$ with $h \in P_g$, becomes more complicated in the
  circular version $\Fc(G,g,\Sb)$.  The elements in $\Fc(G,g,\Sb)$
  cannot be partitioned into a disjoint union of the spaces
  $K_h=\wfact(G,h,\Sb)$, one for each $h$ in $P_g$.
\end{rem}

An example of this gluing phenomenon already occurs in $\sym_3$.

\begin{example}\label{ex:subfact-delta}
  Let $G = \sym_3$, $\delta = (1\ 2\ 3)$ with $\delta = ab = bc = ca$
  as in Example~\ref{ex:absolute-order-interval}. The maximal elements
  (of rank~$2$) in the linear topological poset
  $\Fc(\sym_3,\delta,\Ib)$ form a subspace homeomorphic to $O_\delta$,
  the three isosceles right triangles with a common hypotenuse shown
  in Figure~\ref{fig:weighted-fact-2-interval}.  The rank~$1$ elements
  are three disjoint unit intervals, $O_a = \wfact(\sym_3,a,\Ib)$,
  $O_b = \wfact(\sym_3,b,\Ib)$ and $O_c = \wfact(\sym_3,c,\Ib)$, and
  the unique rank $0$ element is the complex $O_1 =
  \wfact(\sym_3,1,\Ib)$.  Under the quotient identification defined
  above, the rank~$2$ elements $\Fc(\sym_3,\delta,\Sb)$ form the $1$
  vertex interval complex $K_\delta$ schematically shown in
  Figure~\ref{fig:weighted-fact-2-circle}.  But the three unit
  intervals in rank~$1$ are identified end-to-end to form a single
  circle of length~$3$, rather than three disjoint circles $K_a \sqcup
  K_b \sqcup K_c$ each of length~$1$.  This is because conjugating by
  $\delta$ sends $a$ to $b$, $b$ to $c$ and $c$ to $a$.  
\end{example}

The gluing described in Example~\ref{ex:subfact-delta} is caused by
the orbit of $h \in P_g$ under the iterated conjugation by $g$.  In
general, if $k$ is the smallest positive integer such that $g^k$
commutes with $h$, then the $k$ subspaces $O_{h_i}$ with $h_i = g^i h
g^{-i}$ glue together to form a single cyclic $k$-sheeted cover of
  $K_h$.  Note that when $g$ and $h$ commute, we get a copy of $K_h$.  
  For example, $\Cc(\Z,n,\Sb)$ is a disjoint union of the spaces $K_i$ 
  for $i \in \{0,1,\ldots, n\}$ since $\mathbb{Z}$ is abelian.
  This also applies to the maximal elements of $\Fc(G,g,\Sb)$, since $g$
  commutes with $g$, and the maximal elements form a subspace
  homeomorphic to the interval complex $K_g = \wfact(G,g,\Sb)$.  In
  particular, we obtain Theorem~\ref{mainthm:max-elements} as a
  rephrasing of Proposition~\ref{prop:wfact-interval-complex}.

\begin{thm}[Theorem~\ref{mainthm:max-elements}]
  \label{thm:max-elements}
  Let $P_g$ be an interval poset in a marked group $G$ with order
  complex $O_g$ and interval complex $K_g$.  In the circular
  topological poset $\Fc(G,g,\Sb)$, the subspace of maximal elements
  is homeomorphic to $K_g$.
\end{thm}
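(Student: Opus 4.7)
The plan is to treat Theorem~\ref{thm:max-elements} as essentially a rephrasing of Proposition~\ref{prop:wfact-interval-complex} once we verify that passing from the linear topological poset $\Fc(G,g,\Ib)$ to its circular quotient $\Fc(G,g,\Sb)$ neither merges maximal elements with non-maximal ones nor imposes any identifications among maximal elements beyond those already used to construct $K_g = \wfact(G,g,\Sb)$ from $O_g = \wfact(G,g,\Ib)$ in Definition~\ref{def:weighted-fact-circle}. The maximal elements of $\Fc(G,g,\Ib)$ are precisely those $\bu$ of rank $\ell(g)$, i.e. those with $\rho(\bu) = g$, so as a subspace they form exactly $O_g = \wfact(G,g,\Ib)$.

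The key calculation is the following. Suppose $\bu = 0^{x_L} s_1^{x_1} \cdots s_k^{x_k} 1^{x_R}$ is a maximal element, and set $M = x_1 \cdots x_k$. The product identity $x_L M x_R = g$ yields $x_L = g x_R^{-1} M^{-1}$, and substituting gives
\[
g x_R g^{-1} x_L \;=\; g x_R g^{-1} \cdot g x_R^{-1} M^{-1} \;=\; g M^{-1}.
\]
This value depends only on the interior tuple $(x_1, \ldots, x_k)$, so for a second maximal element $\bv$ with $y_i = x_i$ the ``conjugation clause'' in Definition~\ref{def:fact-top-poset-circ} is automatically satisfied. Hence the equivalence that $\Fc(G,g,\Ib)$ induces on the subspace $O_g$ collapses to the interior-matching condition used to pass from $O_g$ to $K_g$ in Definitions~\ref{def:circular-factorization} and~\ref{def:weighted-fact-circle}. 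Moreover, if $\bv$ is a priori only assumed equivalent to $\bu$ with $\rho(\bv) = h \in [1,g]$, then the relation $g y_R g^{-1} y_L = g M^{-1}$ rearranges to $y_L M y_R = g$, forcing $h = g$; thus the equivalence preserves maximality and no maximal element is glued to a non-maximal one.

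Combining these observations, the subspace of maximal elements in $\Fc(G,g,\Sb)$ is, as a set, precisely the quotient of $O_g$ by the equivalence of Definition~\ref{def:weighted-fact-circle}, which by Proposition~\ref{prop:wfact-interval-complex} is $K_g$. The only remaining point is that the quotient topology on $K_g$ and the subspace topology inherited from $\Fc(G,g,\Sb)$ coincide, which follows in the standard way from $q$ being a continuous surjection with compatible cell structures. I anticipate no serious obstacle beyond the bookkeeping required to keep the two very similar equivalence relations straight; the intuitive content is exactly the remark preceding the theorem, namely that $g$ commutes with itself and so produces no extra cyclic covers at the top rank.
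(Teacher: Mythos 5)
Your proposal is correct and follows exactly the paper's route: the paper also reduces the theorem to Proposition~\ref{prop:wfact-interval-complex} by checking that the equivalence relation of Definition~\ref{def:fact-top-poset-circ}, restricted to maximal elements, coincides with the gluing of Definition~\ref{def:weighted-fact-circle} (the paper simply calls this check ``straightforward,'' whereas you carry out the computation $gx_Rg^{-1}x_L = gM^{-1}$ explicitly, which is a welcome addition). No gaps.
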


\begin{proof}
  The only thing to check is that the quotient map defined in
  Definition~\ref{def:fact-top-poset-circ} agrees with the quotient
  map defined in Definition~\ref{def:weighted-fact-circle} when
  restricted to maximal elements, and this is straightforward.
\end{proof}

As an immediate corollary we have Theorem~\ref{mainthm:dual-complex}.

\begin{cor}[Theorem~\ref{mainthm:dual-complex}]
  \label{cor:dual-complex}
  Let $\cb$ be a Coxeter element in a Coxeter group $W$ generated by
  all reflections.  The maximal elements of the circular topological
  poset $\Fc(W,\cb,\Sb)$ form a subspace homeomorphic to the interval complex 
  $K_\cb$ whose fundamental group is the dual Artin
  group $\art^\ast(W,\cb)$.  In particular, the subspace of maximal
  elements in $\Fc(\sym_d,\delta,\Sb)$ is homeomorphic to the dual braid complex
  $K_\delta$ whose fundamental group is the braid group $\braid_d$.
\end{cor}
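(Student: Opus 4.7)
The plan is to deduce this statement as a direct specialization of Theorem~\ref{thm:max-elements}. First I would instantiate the general framework: set $G = W$ and take as the conjugacy-closed generating set $X = T$, the set of all reflections in $W$. Since $W$ is generated by its reflections, every Coxeter element $\cb$ lies in the submonoid $\mon(T) \subset W$, so the interval poset $P_\cb = [1,\cb]$, its order complex $O_\cb$, and its interval complex $K_\cb$ are all defined as in Section~\ref{sec:complexes}.

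Next I would apply Theorem~\ref{thm:max-elements} verbatim to the marked group $(W,T)$ with element $\cb$. The theorem yields a homeomorphism between the subspace of maximal elements of $\Fc(W,\cb,\Sb)$ and the interval complex $K_\cb$. Nothing additional needs to be proved here; the content of the corollary lies only in identifying $K_\cb$ with the standard one-vertex classifying space for the dual Artin group.

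For that identification, I would invoke the definition of the dual Artin group $\art^\ast(W,\cb)$ as the group presented by the nontrivial elements of $[1,\cb]$ modulo the relations arising from length-preserving factorizations of $\cb$ (as recalled in Example~\ref{ex:absolute-order-interval} and the references cited there for \cite{brady01,bessis-03,McCammond-dual-euc-art}). By the construction of $K_\cb$ in Definition~\ref{def:dual-braid-complex}, the $1$-cells of $K_\cb$ correspond to the generators and the $2$-cells correspond to the defining relations, so a standard computation of $\pi_1$ of a one-vertex $\Delta$-complex gives $\pi_1(K_\cb) \cong \art^\ast(W,\cb)$.

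Finally, I would specialize to $W = \sym_d$, $T$ the set of transpositions, and $\cb = \delta = (1\ 2\ \cdots\ d)$. In this case $\art^\ast(\sym_d,\delta)$ is Birman--Ko--Lee's dual presentation of the $d$-strand braid group $\braid_d$, and $K_\delta$ is the dual braid complex, so the second sentence of the corollary follows. There is no real obstacle in this argument: every nontrivial step has already been carried out either in Theorem~\ref{thm:max-elements} or in the cited literature defining the dual Artin group, and the corollary is essentially a restatement of those results in the language of the circular topological poset.
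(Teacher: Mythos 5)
Your proposal is correct and matches the paper's treatment: the paper also derives this as an immediate specialization of Theorem~\ref{thm:max-elements} to $(W,T,\cb)$, with the identification $\pi_1(K_\cb)\cong\art^\ast(W,\cb)$ taken from the definition of the dual Artin group and the cited literature. No further commentary is needed.
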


Once we prove Theorem~\ref{mainthm:nc-fact-isomorphism}
in Section~\ref{sec:cont-nc-part}, the topological poset $\Fc(\sym_d,\delta,\Sb)$
can be replaced with $\NCS{d}{}$.
And Theorem~\ref{mainthm:upper-sets} follows quickly from
Theorem~\ref{thm:interval-lower-upper}.

\begin{cor}[Theorem~\ref{mainthm:upper-sets}]\label{cor:circle-upper-sets}
  Let $\bu$ be a weighted linear factorization of $h \in P_g$ with
  equivalence class $[\bu] \in \Fc(G,g,\Sb)$. Then the upper set
  $\upper([\bu])$ in $\Fc(G,g,\Sb)$ is isomorphic to $\Fc(G,h',\Sb)$
  where $h\cdot h' = g$. Consequently, the maximal elements of
  $\upper([\bu])$ form a subspace which is isometric to the interval
  complex $K_{h'}$ inside the interval complex $K_g$.
\end{cor}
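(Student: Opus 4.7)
The plan is to lift the problem to the linear setting, apply Theorem~\ref{thm:interval-lower-upper}, and then descend through the quotient map $q\colon\Fc(G,g,\Ib)\to\Fc(G,g,\Sb)$. First I would fix a representative $\bu\in\Fc(G,g,\Ib)$ of the given class $[\bu]\in\Fc(G,g,\Sb)$ with $\rho(\bu)=h$, and let $h'$ be determined by $h\cdot h'=g$. Theorem~\ref{thm:interval-lower-upper} already supplies a poset isomorphism (indeed an isometry) $\phi\colon\upper(\bu)\to\Fc(G,h',\Ib)$ given on each coordinate by conjugation by $\prod_{s>r}\bu(s)$, so the real work is to show that this isomorphism descends to a well-defined isomorphism $\bar\phi\colon\upper([\bu])\to\Fc(G,h',\Sb)$ fitting into the commutative square with vertical quotient maps $q$ on both sides.

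The first ingredient I would need is a lifting lemma: every element $[\bw]\in\upper([\bu])$ has a representative $\bw\in\upper(\bu)\subset\Fc(G,g,\Ib)$. By Definition~\ref{def:circular-factorization}, $[\bu]\leq[\bw]$ means $\bu'\leq\bw'$ for some pair of representatives, and using the fact that each equivalence class has a unique representative with final entry $1$ (and more generally can be ``rotated'' about the base point of $\Sb$) I would argue that the pair $(\bu',\bw')$ can be rotated simultaneously so that the first coordinate becomes our chosen $\bu$, giving the desired $\bw$. Combined with the definition of $q$, this shows $q\colon\upper(\bu)\to\upper([\bu])$ is surjective.

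The main obstacle, which is Step~3 in my plan, is to verify that the equivalence relation on $\upper(\bu)$ inherited from $\Fc(G,g,\Sb)$ corresponds under $\phi$ precisely to the equivalence relation on $\Fc(G,h',\Ib)$ that defines $\Fc(G,h',\Sb)$. Unwinding the definitions, two representatives $\bw_1,\bw_2\in\upper(\bu)$ give the same class in $\Fc(G,g,\Sb)$ exactly when they agree on the interior and satisfy $gw_{1,R}g^{-1}w_{1,L}=gw_{2,R}g^{-1}w_{2,L}$; since $\bw_i=\bu\cdot(\bu^{-1}\bw_i)$ and the left end of $\bu$ is $x_L$, the conjugation $\phi$ converts the boundary wrap-around by $g$ in $\Fc(G,g,\Sb)$ into the boundary wrap-around by $h'$ required in $\Fc(G,h',\Sb)$. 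The bookkeeping here amounts to observing that the product $\prod_{s>r}\bu(s)$ collapses telescopically to the element needed to translate the $g$-conjugation of the circle for $g$ into the $h'$-conjugation for $h'$; this is the technical heart of the argument.

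Once $\bar\phi$ is established as a poset isomorphism (and in fact an isometry, by the Hurwitz-move interpretation already noted after Theorem~\ref{thm:interval-lower-upper}), the second statement is immediate. Applying Theorem~\ref{thm:max-elements} to the circular topological poset $\Fc(G,h',\Sb)$ shows that its maximal elements form a subspace isometric to the interval complex $K_{h'}$. Transporting this back through $\bar\phi^{-1}$ identifies the maximal elements of $\upper([\bu])$ with a subspace isometric to $K_{h'}$, and because these maximal elements are also maximal in $\Fc(G,g,\Sb)$, they sit inside the subspace of maximal elements of $\Fc(G,g,\Sb)$, which by Theorem~\ref{thm:max-elements} is $K_g$. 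This realizes $K_{h'}$ as a metric subcomplex of $K_g$, as promised.
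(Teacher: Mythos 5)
Your proposal follows essentially the same route as the paper: the paper's proof simply observes that $\upper([\bu]) = q(\upper(\bu))$ and then cites Theorem~\ref{thm:interval-lower-upper} for the first claim and Proposition~\ref{prop:wfact-interval-complex} for the second. Your plan is correct and merely spells out the two verifications the paper leaves implicit (surjectivity of $q$ restricted to $\upper(\bu)$, and compatibility of $\phi$ with the two wrap-around equivalence relations, which indeed reduces to $h'^{-1}h^{-1}=g^{-1}$).
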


\begin{proof}
  By definition of the subfactorization order on $\Fc(G,g,\Sb)$ and
  the associated quotient map, we have $\upper([\bv]) =
  q(\upper(\bv))$, where $\bv \in \Fc(G,g,\Ib)$ is any representative
  of the equivalence class $[\bv]$. Thus, the first claim follows from
  Theorem~\ref{thm:interval-lower-upper}.  The second, following
  similar reasoning to the proof of Theorem~\ref{thm:interval-lower-upper}, 
  follows from Proposition~\ref{prop:wfact-interval-complex}.
\end{proof}

We conclude by discussing these results for our two running examples.

\begin{example}
  Let $\delta$ be the $d$-cycle $(1\ \cdots\ d) \in \sym_d$ and let
  $[\bu]$ be an equivalence class of a weighted linear factorization
  of $\gamma \in P_\delta = [1,\delta]$. 
  Then the upper set
  $\upper([\bu])$ in $\Fc(\sym_d,\delta,\Sb)$ is isomorphic to
  $\Fc(\sym_d,\gamma^{-1}\delta,\Sb)$. Moreover, the maximal elements
  of $\upper([\bu])$ form a subspace of the dual braid complex which
  is isometric to a product of dual braid complexes with smaller
  dimension.
\end{example}

\begin{example}
  Let $\bs \in \Cc(\Z,n,\Ib)$ with $\rho(\bs) = k$. Then $\upper(\bs)$
  is isomorphic to $\Cc(\Z,n-k,\Ib)$, and the maximal elements of
  $\upper(\bs)$ form a subspace homeomorphic to an orthoscheme of
  dimension $n-k$. Similarly, the upper set $\upper([\bs])$ in
  $\Cc(\Z,n,\Sb)$ is homeomorphic to $\Cc(\Z,n-k,\Sb)$, and the
  maximal elements of $\upper([\bs)$ form a subspace which is
    isometric to the quotient of the standard orthoscheme of dimension
    $n-k$ described in Example~\ref{ex:weighted-comp-circle}. 
\end{example}

\section{Continuous Noncrossing Partitions}\label{sec:cont-nc-part}

We now examine $\Fc(G,g,\Sb)$ in the special case when $G = \sym_d$,
$X=T$, and $g = \delta = (1\ 2\ \cdots\ d)$ as described in
Example~\ref{ex:absolute-order-interval}. In particular, we introduce
a new type of noncrossing partition and use this to prove
Theorems~\ref{mainthm:nc-fact-isomorphism} and
\ref{mainthm:dual-braid-spine}.

\begin{defn}[Noncrossing partitions]
  Let $P$ be a subset of the complex plane and let $\Pi(P)$ denote the
  lattice of all partitions of $P$, partially ordered by refinement.
  The elements of each partition are subsets of $P$ called blocks, and
  we say that a partition is \emph{noncrossing} if the convex hulls of
  its blocks are pairwise disjoint regions in $\mathbb{C}$. We define
  the \emph{poset of noncrossing partitions for $P$} to be the
  subposet of noncrossing elements in $\Pi(P)$, denoted $\NC(P)$.
\end{defn}

When $P$ is the vertex set for a convex $n$-gon, $\NC(P)$ is the
\emph{classical lattice of noncrossing partitions} $\NC(n)$,
originally defined by Kreweras in 1972 \cite{kreweras72}. The
following theorem, proven 25 years later by Biane, illustrates our
interest in $\NC(n)$.

\begin{thm}[\cite{biane}]\label{thm:nc-part-perm} 
  Let $\psi\colon \sym_d \to \Pi(d)$ be the function which sends each
  permutation to the partition formed by the orbits of its action on
  $\{1,\ldots,d\}$. Then $\psi$ restricts to an isomorphism from
  $P_\delta = [1,\delta]$ to $\NC(d)$.
\end{thm}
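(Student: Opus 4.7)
The plan is to verify that $\psi$ restricted to $[1,\delta]$ is an order-preserving bijection onto $\NC(d)$ by establishing three claims: (i) $\psi([1,\delta]) \subseteq \NC(d)$; (ii) the restricted map is a bijection; and (iii) it preserves and reflects the partial order.

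First I would recall the standard identity $\ell(\pi) = d - c(\pi)$ for the absolute reflection length in $\sym_d$, where $c(\pi)$ counts the cycles of $\pi$ (including fixed points). Since $\ell(\delta) = d-1$, the rank of $\pi$ in $P_\delta = [1,\delta]$ is $d - c(\pi)$, which coincides with the rank $d - |\psi(\pi)|$ used on $\NC(d)$. This immediately aligns the rank functions and means it is enough to verify the set-theoretic structure.

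For (i) I would induct on $\ell(\pi)$. The base case $\pi=1$ gives the all-singletons partition, which is trivially noncrossing. For the inductive step, write $\pi = \pi' t$ as the prefix of a reduced factorization of $\delta$, where $\pi' \in [1,\delta]$ has rank $k-1$ and $t=(i\ j) \in T$. By induction $\psi(\pi')$ is noncrossing. Right multiplication by $t$ either merges two cycles of $\pi'$ (when $i,j$ lie in different cycles) or splits one (when they lie in the same cycle); in either case, the requirement that $\pi \leq \delta$, equivalently that $\pi^{-1}\delta \in [1,\delta]$, forces the chord $\{i,j\}$ to be compatible with the noncrossing structure of $\psi(\pi')$, so the resulting partition is again noncrossing.

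For (ii) I would construct an explicit inverse $\phi\colon \NC(d) \to [1,\delta]$: given $P$ with blocks $B_1, \ldots, B_k$, let $\phi(P)$ be the permutation whose disjoint cycles are the $B_i$, each listed in the cyclic order inherited from $\delta = (1\ 2\ \cdots\ d)$. Then $\psi \circ \phi = \mathrm{id}$ by construction. To check $\phi \circ \psi|_{[1,\delta]} = \mathrm{id}$, I would show that for $\pi \leq \delta$ each cycle of $\pi$ already carries the cyclic order induced by $\delta$, again by induction along a reduced prefix of a factorization of $\delta$. To verify $\phi(P) \in [1,\delta]$, one assembles a reduced factorization of $\delta$ by interleaving reduced factorizations of the cycles of $\phi(P)$ with a reduced factorization of the complementary permutation $\phi(P)^{-1}\delta$; the noncrossing condition on $P$ is exactly what makes the total length add up to $d-1 = \ell(\delta)$, using Lemma~\ref{lem:interval-factors} to certify that each partial product sits in $[1,\delta]$.

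For (iii), if $\pi_1 \leq \pi_2$ in $[1,\delta]$ with $\pi_2 = \pi_1 \sigma$ and lengths adding, the cycle-count formula $\ell(\pi_1) + \ell(\sigma) = \ell(\pi_2)$ translates into $c(\pi_1) + c(\sigma) = d + c(\pi_2)$, and a standard orbit-counting argument then forces every cycle of $\pi_1$ to lie inside a cycle of $\pi_2$, so $\psi(\pi_1)$ refines $\psi(\pi_2)$. The reverse implication follows by applying $\phi$ to a refinement in $\NC(d)$ and reading off the analogous factorization.

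The main obstacle is the inductive step of claim (i): the dichotomy between a transposition merging across, versus splitting along, a chord of the current noncrossing partition must be linked to the requirement that the new product remain below $\delta$. Once one establishes that a transposition $(i\ j)$ extends a reduced prefix of a factorization of $\delta$ precisely when the chord $\{i,j\}$ is compatible with $\psi(\pi')$ in the circular sense, both directions of the bijection and the order-preservation follow essentially by bookkeeping.
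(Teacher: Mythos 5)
The paper does not prove this statement at all --- it is imported verbatim from Biane's work (the theorem is tagged \cite{biane} and no proof appears in the text) --- so there is no internal argument to compare yours against; I can only assess your plan on its own terms.

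Your outline has the right shape (the rank alignment $\ell(\pi)=d-c(\pi)$, the explicit inverse via blocks read in $\delta$-cyclic order, the orbit-refinement argument for order preservation are all standard and correct), but the central step is asserted rather than proved, and it is essentially the entire content of the theorem. In the inductive step of (i) you write that the condition $\pi=\pi't\leq\delta$ ``forces the chord $\{i,j\}$ to be compatible with the noncrossing structure of $\psi(\pi')$,'' and you yourself flag this as the main obstacle. Going bottom-up, characterizing which transpositions $t=(i\ j)$ satisfy $\pi't\leq\delta$ with lengths adding requires understanding the cycle structure of the complement $\pi'^{-1}\delta$ (one needs $t\leq\pi'^{-1}\delta$, i.e.\ $i,j$ in a common cycle of the complement, and then that this forces the merged block to be a union of two noncrossing arcs) --- which is the Kreweras-complement structure you are trying to establish. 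The same gap reappears in (ii), where ``the noncrossing condition on $P$ is exactly what makes the total length add up'' and ``each cycle of $\pi$ already carries the cyclic order induced by $\delta$'' are both deferred to the same unproved compatibility claim. A minor additional slip: since $\ell(\pi't)=\ell(\pi')+1$ and $\ell=d-c$, the transposition can only merge cycles, never split one, so ``in either case'' is misleading.

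Two standard ways to close the gap: (a) argue top-down instead --- write $\delta=\pi\sigma$ with lengths adding, peel the transpositions of a reduced word for $\sigma$ off of $\delta$ one at a time, and use the elementary splitting lemma that $(a_1\,\cdots\,a_m)(a_i\ a_j)$ breaks a cycle into two cycles whose supports are complementary arcs in the original cyclic order; induction then gives both noncrossingness and the $\delta$-cyclic ordering of every cycle of every $\pi\leq\delta$, after which your inverse map and order arguments go through. Or (b) prove only injectivity and order preservation of $\psi$ directly and finish by a counting argument, since $|[1,\delta]|$ and $|\NC(d)|$ are both the Catalan number $\frac{1}{d+1}\binom{2d}{d}$. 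As written, the proposal is a plausible roadmap with the decisive lemma missing.
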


The decade following Biane's theorem produced a flurry of connections
between the absolute order on the symmetric group and the
combinatorics of noncrossing partitions---see the survey articles
\cite{baumeister19} and \cite{mccammond06} for more background.  One
of these connections, involving a variation on noncrossing partitions
due to Armstrong \cite{armstrong09}, will be of use to us later in
this section.

\begin{defn}[Shuffle partitions]\label{def:shuffle-partitions}
  Let $\pi \in \Pi(dk)$. Then $\pi$ is a \emph{$k$-shuffle partition}
  if $a \equiv b\ (\text{mod}\ k)$ whenever $a$ and $b$ belong to the
  same block of $\pi$.  When this is the case, note that for each $j
  \in \{1,\ldots,k\}$, we may identify the set $\{1,\ldots,d\}$ with
  the equivalence class of $j$ via the map $m \mapsto (m-1)k+j$ to
  obtain a partition $\pi_j \in \Pi(d)$ which is induced by
  $\pi$. Then $\pi$ is uniquely determined by the $k$-tuple
  $(\pi_1,\ldots,\pi_k)$.
\end{defn}

\begin{thm}[\protect{\cite[Theorem~4.3.5]{armstrong09}}]
  \label{thm:shuffle-reduced-product}
  Let $x_1,\ldots,x_k \in [1,\delta]$, let $\pi_i = \psi(x_i)$ for
  each $i$, and define $\pi$ to be the $k$-shuffle partition in
  $\Pi(dk)$ which is determined by the $k$-tuple
  $(\pi_1,\ldots,\pi_k)$. Then $\pi$ is noncrossing if and only if
  $\ell(x_1) + \cdots + \ell(x_k) = \ell(x_1\cdots x_k)$.
\end{thm}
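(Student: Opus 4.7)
The plan is to translate both sides of the biconditional into combinatorial data via Biane's isomorphism (Theorem~\ref{thm:nc-part-perm}), reduce to the case where each $x_i$ is a single transposition, and close the argument by a direct case analysis of chord crossings in the $dk$-gon. The first move is to replace each $x_i$ by its noncrossing partition $\pi_i = \psi(x_i) \in \NC(d)$. Since reflection length in $\sym_d$ equals $d$ minus the number of cycles, the condition $\sum \ell(x_i) = \ell(x_1\cdots x_k)$ becomes an identity on block counts, which by Lemma~\ref{lem:interval-factors} is equivalent to saying that the concatenation of any reduced $T$-decompositions $x_i = t_{i,1}\cdots t_{i,\ell(x_i)}$ is itself a reduced $T$-decomposition of $x_1\cdots x_k$.

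Next I would reduce to the case where each $x_i$ is a single transposition. To do this I would refine each $\pi_i$ along a noncrossing spanning tree on each of its blocks, producing a refined $(\sum \ell(x_i))$-tuple of transpositions, and show that the shuffle partition for $(\pi_1,\ldots,\pi_k)$ is noncrossing if and only if the shuffle for the refined tuple is noncrossing, for an appropriate choice of refinement. This uses the standard fact that any noncrossing partition refines into noncrossing two-element blocks inside each original block, and that the interleaving in the shuffle construction preserves this compatibility across residue classes. After this reduction, each $\pi_i$ contributes a single chord $\{a_i, b_i\}$ in the $d$-gon, and the shuffle contributes the chord $C_i = \{(a_i-1)k + i,\ (b_i-1)k + i\}$ in the $dk$-gon. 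The biconditional becomes a pairwise claim: for each $i < j$, the chords $C_i$ and $C_j$ are noncrossing in the $dk$-gon if and only if the two transpositions $(a_i\ b_i), (a_j\ b_j)$ are length-additive in $\sym_d$ (equivalently, their chords $\{a_i,b_i\}, \{a_j,b_j\}$ form a noncrossing forest in the $d$-gon). A direct case analysis, using the cyclic positions of the four endpoints together with the offsets $i$ and $j$ modulo $k$, would establish this pairwise equivalence.

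The main obstacle I anticipate is the reduction step: verifying that the noncrossing-ness of the shuffle is appropriately controlled under refinement in both directions. The forward direction (length-additivity implies noncrossing) is the easier half, since any reduced $T$-decomposition of $x_1 \cdots x_k$ yields an explicit compatible refinement. The reverse direction is harder: I would need to show that noncrossing-ness of the coarse shuffle already forces the existence of a compatible refinement into transpositions whose refined shuffle is noncrossing. Equivalently, I must rule out the possibility that a crossing is ``latent'' in the coarse shuffle and emerges only upon refinement. I expect this will require a careful argument about how the interleaved structure of the shuffle transfers crossings across residue classes, perhaps via an explicit algorithm that builds the refinement block-by-block from the outside of the disk inward.
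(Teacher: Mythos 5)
The paper gives no proof of this statement; it is imported verbatim from Armstrong \cite{armstrong09}, so there is no internal argument to compare against and your sketch must stand on its own. It does not, and the decisive problem is the final pairwise claim. Any two \emph{distinct} transpositions are length-additive ($\ell(tt')=2$ whenever $t\neq t'$), so ``length-additive'' cannot be equivalent to ``the chords $C_i$ and $C_j$ do not cross in the $dk$-gon,'' and your parenthetical ``equivalently, their chords form a noncrossing forest'' is false. Concretely, take $d=4$, $\delta=(1\ 2\ 3\ 4)$, $x_1=(1\ 3)$, $x_2=(2\ 4)$: both lie in $[1,\delta]$ and $\ell(x_1)+\ell(x_2)=2=\ell(x_1x_2)$, yet the shuffle blocks $\{1,5\}$ and $\{4,8\}$ cross in the octagon. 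The correct pairwise condition is that $t_it_j$, in that order, is a reduced product lying \emph{below} $\delta$; this is genuinely order-sensitive, since when two chords share an endpoint exactly one of $tt'$, $t't$ lies in $[1,\delta]$. (The same example shows that the statement as printed is missing a hypothesis --- one needs $x_1\cdots x_k\leq\delta$, i.e.\ that the sequence extends to a reduced factorization of $\delta$, which is how Armstrong phrases it and how the paper actually applies it --- and any correct proof must use that hypothesis somewhere; your sketch never does.)

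Even after correcting the pairwise condition, reducing the biconditional to pairs is only free on the geometric side, where noncrossingness is by definition a pairwise condition on blocks. On the algebraic side, pairwise length-additivity does not imply total length-additivity (for $(1\ 2),(2\ 3),(1\ 3)$ in $\sym_3$ every pair is length-additive but the triple product is a single transposition), and even the implication from ``every ordered pair $t_it_j$ with $i<j$ is reduced and below $\delta$'' to ``$t_1\cdots t_m$ is reduced'' is precisely the hard content of the theorem: it requires a global argument, for instance an induction on the noncrossing graph formed by the chords combined with Theorem~\ref{thm:nc-part-perm}, not a ``direct case analysis'' of two chords at a time. By contrast, the refinement step you single out as the main obstacle is comparatively routine (refine each block into the boundary path of its convex hull and insert the new residue classes immediately after position $i$; length-additivity is preserved automatically by Lemma~\ref{lem:interval-factors}). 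So the emphasis of the sketch is inverted: the step you worry about is manageable, while the step you present as finished conceals both an incorrect equivalence and the genuinely difficult implication.
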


Some recent progress on the structure of $\NC(P)$ when $P$ is finite
(but not convex) can be found in \cite{cdhm23}, but seemingly little
attention has been devoted to cases where $P$ is infinite. Here, we
are interested in the case where $P$ is the unit circle $\Sb$.  We
refer to $\NCS{}{}$ as the \emph{poset of continuous noncrossing
partitions}, and we are particularly interested in a subposet of these
which are compatible with a covering map for the circle.

\begin{defn}[Degree-$d$ partitions]\label{def:degree-d-invariant}
    Let $f\colon \Sb \to \Sb$ be the standard degree-$d$ covering map
    $f(z) = z^d$. We say that a partition $\pi \in \Pi(\Sb)$ is a
    \emph{degree-$d$ partition} if $f(z) = f(w)$ whenever $z$ and $w$
    belong to the same block of $\pi$. When this is the case, we may fix
    numbers $0 = s_L < s_1 < \cdots < s_k < 1$ such that $z$ belongs to
    a nontrivial block of $\pi$ only if $f(z) = e^{2\pi i s_j}$ for some
    $j$, then identify each preimage $f^{-1}(e^{2\pi i s_j})$ with the
    set $\{1,\ldots,d\}$ by reading off the preimages in increasing
    order of argument in $[0,2\pi)$. If we let $\pi_j$ be the partition
    in $\Pi(d)$ determined by $\pi$ in this way, then $\pi$ can be
    uniquely denoted by the expression $\pi = 0^{\pi_L} s_1^{\pi_1}
    \cdots s_k^{\pi_k}$.  It is clear from this construction that
    $\pi$ is noncrossing if and only if the $(k+1)$-shuffle partition
    determined by $(\pi_L,\pi_1,\ldots,\pi_k)$ is noncrossing.  Denote
    the subposet of all degree-$d$ partitions by $\Pi_d(\Sb)$ and the
    subposet of all noncrossing degree-$d$ partitions by
    $\NCS{d}{}$. Note that one may replace $f$ with any covering map
    $\Sb \to \Sb$ of degree $d$, and the resulting poset of
    ``$f$ noncrossing'' partitions will be isomorphic to
    $\NCS{d}{}$.
\end{defn}

\begin{example}\label{ex:ncs-k-shuffle}
  The degree-$12$ noncrossing partition in
  Figure~\ref{fig:ncs-example} can be described by the shorthand $\pi
  = 0^{\pi_L} 0.1^{\pi_1} 0.4^{\pi_2} 0.5^{\pi_3} 0.9^{\pi_4}$, where
  \begin{align*}
    \pi_L &= \{\{1\},\{2\},\{3\},\{4\},\{5\},\{6\},\{7\},\{8\},\{9\},\{10\},\{11\},\{12\}\}; \\
    \pi_1 &= \{\{1\},\{2\},\{3\},\{4,5\},\{6\},\{7\},\{8\},\{9\},\{10\},\{11\},\{12\}\}; \\
    \pi_2 &= \{\{1,2,3,11\},\{4\},\{5\},\{6\},\{7\},\{8\},\{9\},\{10\},\{12\}\}; \\
    \pi_3 &= \{\{1\},\{2\},\{3,6,8\},\{4\},\{5\},\{7\},\{9\},\{10\},\{11,12\}\}; \\
    \pi_4 &= \{\{1\},\{2\},\{3\},\{4\},\{5\},\{6\},\{7\},\{8,9,10\},\{11\},\{12\}\}.
  \end{align*}
  See Figure~\ref{fig:ncs-k-shuffle} for an illustration (omitting the discrete partition $\pi_L$).
  Note that when superimposed in the proper order, these partitions form a 
  noncrossing hypertree on the vertex set $\{1,\ldots,12\}$ as described in \cite{Mc-ncht}.
\end{example}

\begin{figure}
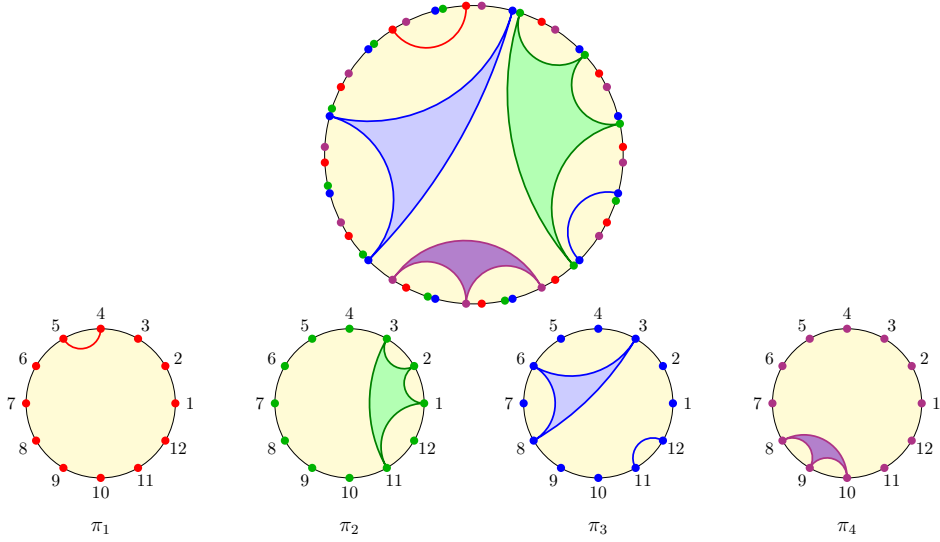

  \centering
  \includestandalone[width=\textwidth]{fig-ncs-k-shuffle}
  \caption{A degree-$12$-invariant noncrossing partition $\pi$
    together with its component partitions $\pi_1,\pi_2,\pi_3,\pi_4
    \in \NC(d)$, as described in
    Example~\ref{ex:ncs-k-shuffle}.}
  \label{fig:ncs-k-shuffle}
\end{figure}

It would have been reasonable to refer to the elements of $\NCS{d}{}$
as ``continuous shuffle partitions'' considering their resemblance to
Definition~\ref{def:shuffle-partitions}. Instead, we use the
descriptor \emph{degree-$d$ noncrossing partitions} to recognize their
previous appearance in unpublished work of the late W. Thurston, which
was later completed by Baik, Gao, Hubbard, Lei, Lindsey, and
D. Thurston \cite{thurston20}. In the article, Thurston and his
collaborators described a spine for the space of monic complex
polynomials with $d$ distinct roots, where each point is labeled by a
``primitive major'' of a ``degree-$d$-invariant'' lamination of the
disk.  We have adopted the term ``degree-$d$'' but avoided the word
``invariant'' since these partitions are rarely invariant under a 
$2\pi/d$ rotation.

The following definition and lemma rephrase a useful observation of
Thurston's regarding the maximum number of non-singleton blocks in a
degree-$d$ noncrossing partition.

\begin{defn}[Total criticality]\label{def:total-criticality}
  Let $\pi \in \Pi_d(\Sb)$ and suppose that $\pi$ has $k$
  non-singleton blocks, denoted $A_1,\ldots,A_k$. The \emph{total
  criticality} of the partition $\pi$ is defined to be $|A_1| + \cdots
  + |A_k| - k$.  It is straightforward to see that the total
  criticality gives a rank function $\rank\colon \Pi_d(\Sb) \to
  \mathbb{N}$, and this descends to a rank function for $\NCS{d}{}$.
\end{defn}

The total criticality of a generic degree-$d$ partition can be
arbitrarily large, but this is not the case for their noncrossing
counterparts.

\begin{lem}[\protect{\cite[Proposition~2.1]{thurston20}}]
  \label{lem:total-criticality}
  If $\pi \in \NCS{d}{}$, then the total criticality of $\pi$ is at
  most $d-1$. Consequently, $\NCS{d}{}$ is a graded poset of height
  $d-1$.
\end{lem}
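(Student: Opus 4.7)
The plan is to reduce the bound to known facts about the absolute order on $\sym_d$ by exploiting Biane's theorem (Theorem~\ref{thm:nc-part-perm}) and Armstrong's shuffle theorem (Theorem~\ref{thm:shuffle-reduced-product}). Given $\pi \in \NCS{d}{}$ presented as $\pi = 0^{\pi_L} s_1^{\pi_1} \cdots s_k^{\pi_k}$, I would first observe that each non-singleton block of $\pi$ is a non-singleton block of exactly one component $\pi_j \in \NC(d)$, so the total criticality decomposes as $\text{crit}(\pi) = \text{crit}(\pi_L) + \sum_{j=1}^{k} \text{crit}(\pi_j)$. A short count shows $\text{crit}(\pi_j) = d - |\pi_j|$, where $|\pi_j|$ counts all blocks including singletons. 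Under Biane's correspondence $\pi_j = \psi(x_j)$ with $x_j \in [1,\delta]$, this quantity is exactly the absolute reflection length $\ell(x_j)$.

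Next I would invoke the shuffle theorem. By Definition~\ref{def:degree-d-invariant}, noncrossingness of $\pi$ in $\NCS{d}{}$ is equivalent to noncrossingness of the associated $(k+1)$-shuffle partition determined by $(\pi_L,\pi_1,\ldots,\pi_k)$, which by Theorem~\ref{thm:shuffle-reduced-product} forces
\[
\ell(x_L) + \ell(x_1) + \cdots + \ell(x_k) \;=\; \ell(x_L \cdot x_1 \cdots x_k).
\]
By Lemma~\ref{lem:interval-factors} the product on the right lies in $[1,\delta]$, so its length is at most $\ell(\delta) = d-1$. Combining with the previous identifications gives $\text{crit}(\pi) = \ell(x_L \cdot x_1 \cdots x_k) \leq d-1$.

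For the gradedness statement I would verify that cover relations in $\NCS{d}{}$ change the total criticality by exactly one. A cover $\pi \lessdot \pi'$ must merge two blocks of $\pi$ sharing a common preimage under $z \mapsto z^d$, either at an existing height $s_j$ or at a fresh height where $\pi$ has only singletons. In either case the local change is a standard cover in some $\NC(d)$ component, which raises $d - |\pi_j|$ by one and leaves the other components unchanged, so $\text{crit}$ is a rank function. To pin down the height exactly, I would exhibit an explicit saturated chain of length $d-1$ from the all-singletons partition to a maximal element, for instance by fixing a single height $s_1$ and successively merging one additional preimage into a growing block until its size reaches $d$.

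The step most likely to require care is the cover analysis, especially the case where a cover introduces a genuinely new height by fusing two singletons. Here one must confirm that no intermediate partition in $\NCS{d}{}$ can sit strictly between $\pi$ and $\pi'$; the cleanest way around this is to observe that any strict refinement relation in $\NCS{d}{}$ strictly increases $\text{crit}$, and to use the bound above to conclude that a unit increase is minimal and hence realized on each cover.
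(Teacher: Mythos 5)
The paper does not actually prove this lemma: it is imported wholesale from Thurston's Proposition~2.1, whose argument is a direct geometric count on laminations of the disk (in the same spirit as the dual-tree Euler-characteristic computation the paper later performs for Lemma~\ref{lem:maximal-elements}). Your argument is therefore a genuinely different, purely algebraic route: you convert total criticality into absolute reflection length via Biane's bijection (the criticality of each layer $\pi_j$ is $d-|\pi_j|=\ell(x_j)$), use Armstrong's shuffle theorem to turn noncrossingness into additivity of these lengths, and bound the sum by the reflection length of the product. This is an appealing observation, since it shows the criticality bound already follows from the machinery the paper assembles for Theorem~\ref{thm:degree-d-invariant-isom}, so the external citation could in principle be dispensed with; the price is that you need Armstrong's theorem, whereas Thurston's count is elementary and self-contained.

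Two points need repair. First, Lemma~\ref{lem:interval-factors} does not place $x_L x_1\cdots x_k$ in $[1,\delta]$: that lemma extracts subproducts from a factorization already known to be a reduced factorization of $\delta$, and length-additivity of a product of elements of $[1,\delta]$ does not by itself force the product below $\delta$. Fortunately you do not need this: every $w\in\sym_d$ satisfies $\ell(w)=d-c(w)\leq d-1$, where $c(w)$ is the number of cycles, and that is all the bound requires. (You should also record that each $\pi_j$ is itself noncrossing, being the restriction of the noncrossing shuffle to a cyclically ordered fiber, so that $x_j=\psi^{-1}(\pi_j)$ exists in $[1,\delta]$ and Theorem~\ref{thm:shuffle-reduced-product} applies.) Second, your fallback for gradedness --- ``a unit increase is minimal and hence realized on each cover'' --- is not a valid inference: a strictly increasing, bounded rank candidate can still jump by two across a cover. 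The correct argument is interpolation: if $\pi<\pi'$ and some block of $\pi'$ is a union of at least two blocks of $\pi$ with total criticality difference at least $2$, merge two of those sub-blocks that are adjacent along the boundary of the convex hull of the containing block of $\pi'$; the result remains noncrossing, remains degree-$d$ (the fiber condition is inherited from the block of $\pi'$), and raises the criticality by exactly one, producing a strictly intermediate element. Note that the paper itself off-loads gradedness to the assertion in Definition~\ref{def:total-criticality}, so for this lemma only the bound and a chain realizing $d-1$ are genuinely at issue, and your explicit chain handles the latter.
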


It follows from Lemma~\ref{lem:total-criticality} that a degree-$d$
noncrossing partition has at most $d-1$ non-singleton blocks (in which
case each has two elements). We can also give a clear characterization
of the maximal elements in $\NCS{d}{}$ as follows.

\begin{defn}[Complementary regions]
  Let $\pi \in \NCS{d}{}$ and identify $\Sb$ with the boundary of a
  disk.  The \emph{complementary regions} of $\pi$ are the connected
  components of the disk after removing the convex hulls of the blocks
  of $\pi$.
\end{defn}

\begin{lem}\label{lem:maximal-elements}
  The partition $\pi \in \NCS{d}{}$ has $\rank(\pi)+1$ complementary
  regions.  Consequently, the maximal elements of $\NCS{d}{}$ are
  those which have exactly $d$ complementary regions.
\end{lem}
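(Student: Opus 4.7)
The plan is to associate a finite planar graph to each partition $\pi \in \NCS{d}{}$ and then apply Euler's formula. Build a graph $G_\pi$ embedded in the closed unit disk whose vertices are the points of $\Sb$ lying in non-singleton blocks of $\pi$, and whose edges consist of the boundary chords of the convex hulls of these blocks (one chord for a block of size $2$, and $m$ chords for a block of size $m \geq 3$) together with the arcs of $\Sb$ between consecutive vertices. The noncrossing hypothesis guarantees that chords meet only at shared vertices on $\Sb$, so $G_\pi$ is a genuine planar graph, and the arcs along $\Sb$ guarantee that it is connected whenever it is nonempty.

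Next, I would identify the bounded faces of $G_\pi$. Each such face is either a complementary region of $\pi$ or the interior of the convex hull of a block of size at least $3$; no face is double-counted because the convex hulls of distinct blocks are pairwise disjoint. If $R$ denotes the number of complementary regions and $k_{\geq 3}$ the number of blocks of size at least $3$, then the number of bounded faces is $F_b = R + k_{\geq 3}$.

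The counting is then direct. Writing the non-singleton block sizes as $m_1, \ldots, m_k$ and letting $k_2$ be the number of size-$2$ blocks, the vertex count is $V = \sum_j m_j$ and the edge count is $E = V + C$ where $C = k_2 + \sum_{m_j \geq 3} m_j$ is the total number of chord edges. Applying Euler's formula $V - E + F = 2$ (counting both the bounded faces and the single exterior face) gives $F_b = E - V + 1 = C + 1$, so rearranging yields
\begin{equation*}
R = C + 1 - k_{\geq 3} = \sum_j (m_j - 1) + 1 = \rank(\pi) + 1,
\end{equation*}
as desired. The trivial partition is handled as a separate base case, where $G_\pi$ is empty, $\rank(\pi) = 0$, and the disk has a single complementary region.

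The second claim then follows from Lemma~\ref{lem:total-criticality}: the rank function on $\NCS{d}{}$ is bounded by $d-1$ and $\NCS{d}{}$ is graded of this height, so the maximal elements are exactly those of rank $d-1$, which have $d$ complementary regions. The main obstacle I anticipate is the bookkeeping around edge cases of $G_\pi$ — in particular, verifying connectedness when blocks are scattered around $\Sb$ and confirming that the identification $F_b = R + k_{\geq 3}$ is exact. An alternative route would be induction on rank, where the inductive step shows that each cover relation in $\NCS{d}{}$ (a merge of two blocks) adds exactly one new chord that subdivides a single complementary region into two.
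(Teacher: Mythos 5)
Your argument is correct, and it is a close cousin of the paper's proof rather than a copy of it: both are Euler-characteristic counts attached to a planar picture of $\pi$, but the graphs are different. The paper places a black vertex in each convex hull and a white vertex in each complementary region, observes that the resulting dual bipartite graph is a tree with $k$ black vertices and $\sum_j m_j = \rank(\pi)+k$ edges, and reads off the number of white vertices from $V=E+1$. You instead work with the primal graph of hull-boundary chords together with the arcs of $\Sb$, and apply $V-E+F=2$. The trade-off is that the paper's route is shorter but quietly relies on the dual graph being a tree (connected \emph{and} acyclic, the latter coming from the disk being simply connected and the hulls disjoint), whereas your route only needs connectedness of $G_\pi$, which is immediate because the whole circle is included as edges; in exchange you must do the bookkeeping separating polygon interiors from complementary regions among the bounded faces, which you do correctly ($C-k_{\geq 3}=k_2+\sum_{m_j\geq 3}(m_j-1)=\sum_j(m_j-1)=\rank(\pi)$). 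Your handling of the second claim via Lemma~\ref{lem:total-criticality} matches the paper's. One small point worth making explicit if you write this up: distinct blocks have disjoint convex hulls and hence disjoint vertex sets, and no chord lies along $\Sb$, so $G_\pi$ really is an embedded planar graph with the faces you claim.
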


\begin{proof}
  If $\pi$ is a degree-$d$ noncrossing partition, then we can
  illustrate $\pi$ in the disk and define a dual bipartite tree for
  $\pi$ by placing a black vertex in each convex hull and a white
  vertex in each complementary region, then connecting a black vertex
  to a white vertex when the two corresponding regions are
  adjacent. If $\pi$ has $k$ non-singleton blocks, then the tree must
  have $k$ black vertices and $\rank(\pi) + k = d-1 + k$ edges. By the
  Euler characteristic, the number of vertices for a tree is always
  one more than the number of edges, so it follows that the number of
  white vertices, and thus the number of complementary regions, is
  $\rank(\pi)+1$. By Lemma~\ref{lem:total-criticality}, we may thus
  conclude that $\pi$ is a maximal element of $\NCS{d}{}$ if and only
  if it has $d$ complementary regions.
\end{proof}

Given $\pi \in \NCS{d}{}$, we can draw the convex hulls of the blocks
of $\pi$ in the disk with unit circumference (rather than unit
radius), then deformation retract each convex hull to a point. Under
this transformation, the boundary of the disk becomes a metric graph
known as a \emph{cactus}, and in the special case where $\pi$ is
maximal, Lemma~\ref{lem:maximal-elements} tells us that this graph can
be built by gluing together $d$ circles, each of length $1/d$.  These
metric graphs make an appearance in \cite{gcp1}, where we associate
such a graph to each complex polynomial with distinct roots and
critical values on the unit circle. In \cite[Section~8]{gcp1}, we also
described a connection between continuous noncrossing partitions (then
referred to as ``real'' noncrossing partitions) and a type of metric
tree that we called a \emph{banyan}.  These connections between
$\NCS{d}{}$ and complex polynomials are of central importance in our
ongoing paper series \cite{gcp2}.

We are now ready to prove Theorem~\ref{mainthm:nc-fact-isomorphism}.
Recall from Definition~\ref{def:g-multisets} that the set of all
$G$-multisets from $S$ to $G$ is denoted by $\mult(G,S)$.

\begin{figure}
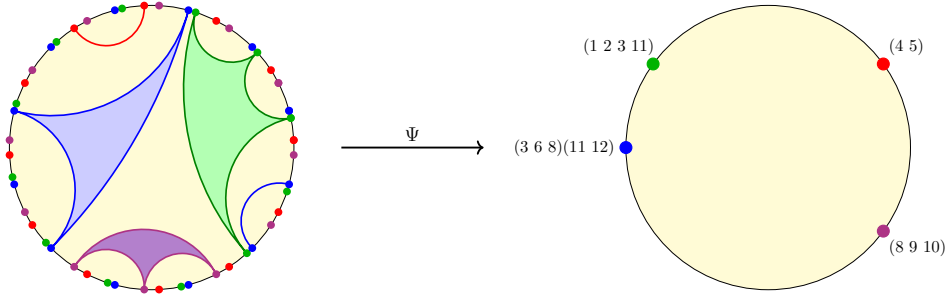

  \centering
  \includestandalone[width=\textwidth]{fig-degree-d-invariant-isom}
  \caption{A continuous noncrossing partition in $\NCS{12}{}$ and its
    corresponding weighted circular factorization in
    $\Fc(\sym_d,\delta,\Sb)$}
    \label{fig:degree-d-invariant-isom}
\end{figure}

\begin{thm}[Theorem~\ref{mainthm:nc-fact-isomorphism}]\label{thm:degree-d-invariant-isom}
  Define $\Psi \colon \mult(\sym_d,\Sb) \to \Pi_d(\Sb)$ by sending the
  $\sym_d$-multiset $0^{x_L}s_1^{x_1} \cdots s_k^{x_k}1^1$ to the
  partition $0^{\psi(x_L)} s_1^{\psi(x_1)} \cdots s_k^{\psi(x_k)}$.
  Then $\Psi$ restricts to an isomorphism from $\Fc(\sym_d,\delta,\Sb)$ to
  $\NCS{d}{}$.
\end{thm}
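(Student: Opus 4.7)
The plan is to work with the canonical representative of each equivalence class in $\Fc(\sym_d,\delta,\Sb)$ (the one with final entry $1$) and to verify that $\Psi$, restricted to $\Fc(\sym_d,\delta,\Sb)$, is a well-defined bijection onto $\NCS{d}{}$ which is order-preserving in both directions. Each canonical representative $\bw=0^{z_L}s_1^{z_1}\cdots s_k^{z_k}1^1$ is literally a $\sym_d$-multiset on $\Sb$ (using $\Sb\cong\Ib/(0\sim 1)$), so $\Psi$ as defined in the theorem statement applies directly and produces $\Psi(\bw)=0^{\psi(z_L)}s_1^{\psi(z_1)}\cdots s_k^{\psi(z_k)}\in\Pi_d(\Sb)$. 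Since the canonical representative is unique (Definition~\ref{def:fact-top-poset-circ}), no further well-definedness check is needed.

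To confirm that the image lies in $\NCS{d}{}$, recall from Definition~\ref{def:fact-top-poset-circ} that $\bw$ is a weighted linear factorization of some $h\in P_\delta$ with additive reflection lengths $\ell(z_L)+\ell(z_1)+\cdots+\ell(z_k)=\ell(h)$. Theorem~\ref{thm:shuffle-reduced-product} applied to the $(k+1)$-tuple $(z_L,z_1,\ldots,z_k)$ then produces a noncrossing $(k+1)$-shuffle partition in $\Pi((k+1)d)$, and by the reformulation in Definition~\ref{def:degree-d-invariant} this is exactly the condition that $\Psi(\bw)\in\NCS{d}{}$. Injectivity of $\Psi$ follows from Biane's isomorphism (Theorem~\ref{thm:nc-part-perm}): distinct canonical representatives differ at some component, and $\psi|_{P_\delta}$ is a bijection onto $\NC(d)$. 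For surjectivity, given $\pi=0^{\pi_L}s_1^{\pi_1}\cdots s_k^{\pi_k}\in\NCS{d}{}$, I set $z_i=\psi^{-1}(\pi_i)\in P_\delta$ via Biane. Noncrossing-ness of $\pi$ translates to noncrossing-ness of the associated $(k+1)$-shuffle partition, and Theorem~\ref{thm:shuffle-reduced-product} in the reverse direction supplies additive reflection length for $(z_L,z_1,\ldots,z_k)$, producing an element $h:=z_Lz_1\cdots z_k\in P_\delta$. The multiset $\bw=0^{z_L}s_1^{z_1}\cdots s_k^{z_k}1^1$ is then a canonical representative in $\Fc(\sym_d,\delta,\Sb)$ satisfying $\Psi(\bw)=\pi$.

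For order-preservation I reduce both sides to componentwise comparisons: by Remark~\ref{rem:lower-sets}, the lower set of $\bw$ in $\Fc(\sym_d,\delta,\Sb)$ factors as the product of intervals $P_{z_L}\times P_{z_1}\times\cdots\times P_{z_k}$, while refinement of $\pi$ in $\NCS{d}{}$ refines each component $\pi_i$ in $\NC(d)$ independently, and Biane's isomorphism identifies these two families of intervals entry by entry. The main obstacle is the surjectivity step, where one must certify that the product $h$ of the Biane preimages genuinely lies in $P_\delta$ rather than escaping to a crossing permutation of the same reflection length; this containment is the substantive content of the reverse direction of Theorem~\ref{thm:shuffle-reduced-product}, and ultimately relies on Lemma~\ref{lem:interval-factors} applied to an appropriate extension of the noncrossing shuffle structure. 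Once that containment is in hand, the remaining verifications are bookkeeping.
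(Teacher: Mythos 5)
Your proposal is correct and follows essentially the same route as the paper: characterize membership in $\NCS{d}{}$ via Armstrong's shuffle-partition theorem (Theorem~\ref{thm:shuffle-reduced-product}) together with Lemma~\ref{lem:interval-factors}, use Biane's isomorphism (Theorem~\ref{thm:nc-part-perm}) for injectivity and surjectivity, and check the order relation componentwise. The only differences are cosmetic: you spell out the order-preservation step (which the paper dispatches with ``by the definitions of the partial orders'') and you explicitly flag the containment $z_L z_1\cdots z_k\in P_\delta$ in the surjectivity step, which the paper likewise delegates to the reverse direction of Theorem~\ref{thm:shuffle-reduced-product}.
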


\begin{proof}
  Let $\bx = 0^{x_L}s_1^{x_1} \cdots s_k^{x_k}1^1 \in
  \mult(\sym_d,\Sb)$, define $\pi_j = \psi(x_j)$ for each $j$ and
  consider the partition $\Psi(\bx) = 0^{\pi_L} s_1^{\pi_1} \cdots
  s_k^{\pi_k}$. As outlined in
  Definition~\ref{def:degree-d-invariant}, $\Psi(\bx)$ is noncrossing
  if and only if the shuffle partition in $\Pi(dk+d)$ determined by
  $(\pi_L,\ldots,\pi_k)$ is noncrossing, and this is equivalent to
  having $x_L,\ldots,x_k \in [1,\delta]$ and $\ell(x_L) + \cdots +
  \ell(x_k) = \ell(x_L\cdots x_k)$ by
  Theorem~\ref{thm:shuffle-reduced-product}. By
  Lemma~\ref{lem:interval-factors}, this condition is satisfied by all
  elements of $\Fc(\delta,\Sb)$, so $\Psi$ restricts to a map
  $\Fc(\sym_d,\delta,\Sb) \to \NCS{d}{}$. See
  Figure~\ref{fig:degree-d-invariant-isom}.
	
  To see that $\Psi$ is surjective, let $\pi = 0^{\pi_L} s_1^{\pi_1}
  \cdots s_k^{\pi_k}$ be an element of $\NCS{d}{}$ and note that $k
  \leq d-1$ by Lemma~\ref{lem:total-criticality}. By
  Theorem~\ref{thm:nc-part-perm}, we may define $x_i =
  \psi^{-1}(\pi_i)$ for each $i$ and again apply
  Theorem~\ref{thm:shuffle-reduced-product} to see that
  $0^{x_L}s_1^{x_1} \cdots s_k^{x_k}1^1$ is an element of
  $\Fc(\delta,\Sb)$ which is sent to $\pi$. Injectivity of $\Psi$ then
  follows from Theorem~\ref{thm:nc-part-perm}.  And, by the definitions
  of the partial orders, $\bx \leq \by$ in $\Fc(\sym_d,\delta,\Sb)$ if
  and only if $\Psi(\bx)\leq \Psi(\by)$ in $\NCS{d}{}$. Therefore,
  $\Psi$ is an isomorphism and the proof is complete.
\end{proof}

\begin{figure}
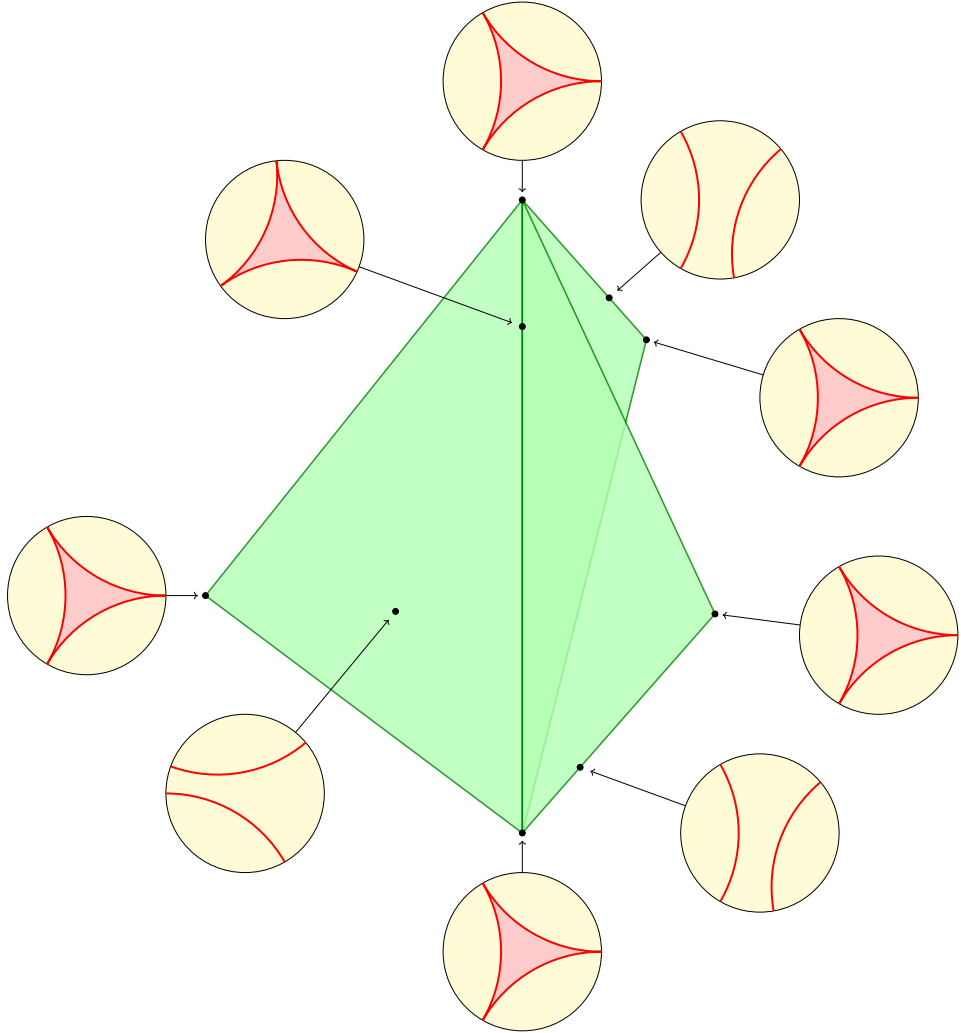

  \centering
  \includestandalone[width=\textwidth]{fig-dual-braid-complex}
  \caption{The isomorphism described in
    Theorem~\ref{thm:degree-d-invariant-isom} allows us to label the
    points in the dual braid complex from
    Figure~\ref{fig:weighted-fact-2-circle} by maximal elements of
    $\NCS{3}{}$. As before, points with the same label are glued
    together, which means that all five vertices are identified and
    the short edges are identified in pairs.}
    \label{fig:dual-braid-complex}
\end{figure}

As a consequence of Theorem~\ref{thm:degree-d-invariant-isom}, we can
import the topology and cell structure from $\Fc(\sym_d,\delta,\Sb)$
to $\NCS{d}{}$---see Figure~\ref{fig:dual-braid-complex} for an
example using the maximal elements of $\NCS{3}{}$.  In
\cite{thurston20}, Thurston and his collaborators gave a topology for
the maximal elements of $\NCS{d}{}$ using a slightly different metric:
all edges of the dual braid complex would have equal length in their
metric, whereas the lengths differ in the orthoscheme metric as
described in Remark~\ref{rem:orthoscheme-faces}.  Nevertheless, the
two topologies are homeomorphic.

Let $\poly_d^{mc}(U)$ denote the space of monic degree-$d$ complex
polynomials for which the roots are centered at the origin and the
critical values lie in the subspace $U \subseteq \mathbb{C}$. The
following theorem from \cite{thurston20} uses the topology above to
provide a spine for $\poly_d^{mc}(\mathbb{C}^*)$, the space of
polynomials with distinct roots.

\begin{thm}[\protect{\cite[Theorem~9.2]{thurston20}}]\label{thm:thurston-def-ret}
  The space of maximal elements in $\NCS{d}{}$ is homeomorphic to
  $\poly_d^{mc}(\Sb)$, and there is a deformation retraction from
  $\poly_d^{mc}(\mathbb{C}^*)$ to $\poly_d^{mc}(\Sb)$.
\end{thm}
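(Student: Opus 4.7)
The plan is to establish the two assertions separately: the homeomorphism $\textrm{Max}(\NCS{d}{}) \cong \poly_d^{mc}(\Sb)$ and the deformation retraction from $\poly_d^{mc}(\mathbb{C}^*)$ onto it. For the homeomorphism I would use the classical theory of external rays in complex dynamics. Given $p \in \poly_d^{mc}(\Sb)$, the B\"ottcher coordinate at infinity conjugates $p$ to $z\mapsto z^d$ on a neighborhood of $\infty$, and since all critical values lie on $\Sb$ the external rays $R_\theta$ for $\theta \in \R/\Z$ each land at a well-defined point of $p^{-1}(\Sb)$. Define $\Phi(p) \in \NCS{d}{}$ by declaring $\theta \sim \theta'$ precisely when $R_\theta$ and $R_{\theta'}$ share a landing point. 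At a critical point of local degree $m$ lying over $e^{2\pi i \alpha}$, exactly $m$ rays land, all with angles satisfying $d\theta \equiv \alpha \pmod{1}$, so each block lies in a single fiber of $z \mapsto z^d$. Noncrossingness follows because external rays are pairwise disjoint outside their landing points, and maximality follows from summing block multiplicities to $\deg(p') = d-1$, matching the rank bound of Lemma~\ref{lem:total-criticality}.

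To establish that $\Phi$ is a bijection I would construct an inverse via a Belyi-style uniformization. A maximal element $\pi \in \NCS{d}{}$ encodes a bipartite cactus structure on the sphere with exactly $d$ complementary regions, as in Lemma~\ref{lem:maximal-elements}. Equipping each region with its unique conformal structure as a disk and piecing the pieces together via the uniformization theorem produces a Riemann surface biholomorphic to $\hat{\mathbb{C}}$, and the covering map to the base sphere is realized, after normalization to monic form with roots centered at the origin, by a unique polynomial in $\poly_d^{mc}(\Sb)$. Continuity of $\Phi$ and its inverse would be checked by verifying that small coefficient perturbations cause small changes of landing-ray data, and conversely that small deformations of the cactus structure give small changes of coefficients via the Ahlfors--Bers theory of quasiconformal deformations.

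For the deformation retraction I would flow each critical value radially by $c_i(t) = c_i/|c_i|^t$ for $t \in [0,1]$, so that $c_i$ stays in $\mathbb{C}^*$ throughout and ends on $\Sb$. This flow on unordered multisets of critical values lifts to a flow on $\poly_d^{mc}(\mathbb{C}^*)$: on the open stratum where the critical values are distinct, the map sending $p$ to its critical-value multiset is a finite covering map, so the radial path lifts uniquely by covering-space theory; on the discriminant strata, where critical values coincide, one uses local normal forms for $p$ near a multiple critical point to extend the lift continuously. The main obstacle is precisely this last step: one must check that the path-lift varies continuously across the discriminant locus and produces a homotopy of the identity on $\poly_d^{mc}(\mathbb{C}^*)$ that restricts to the identity on $\poly_d^{mc}(\Sb)$. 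A secondary technical point is continuity of $\Phi$ at polynomials whose critical values collide on $\Sb$, where the combinatorial type of the landing partition can jump and must be controlled by a careful limiting argument.
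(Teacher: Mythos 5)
This theorem is not proved in the paper at all: it is quoted verbatim from \cite[Theorem~9.2]{thurston20} and invoked as a black box (the introduction says explicitly that Thurston's deformation retraction is being cited, and that an independent proof is deferred to \cite{gcp2}). So there is no internal argument to compare yours against; what you have written is essentially an outline of the proof in the cited source, and its strategy does match that source: the map $p \mapsto \Phi(p)$ is defined via landing points of the preimages of radial rays under the conformal isomorphism $\phi\colon \{|w|>1\} \to p^{-1}(\{|z|>1\})$ satisfying $p\circ\phi(w) = w^d$ (this domain is connected and simply connected precisely because all critical values of $p$ lie in $\overline{\mathbb{D}}$, so your ``external rays'' should be understood as these preimages rather than dynamical rays of an iterated map, and landing is automatic since $p^{-1}(\Sb)$ is a finite union of analytic arcs); the retraction is the radial rescaling of critical values lifted through the critical-value (Lyashko--Looijenga) map.

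Judged as a proof rather than a citation, the sketch leaves open exactly the steps that carry the content, and you flag them yourself. For the homeomorphism, the hard direction is the inverse: ``uniformize the cactus'' is the right idea, but one must show the resulting polynomial is unique in $\poly_d^{mc}(\Sb)$ and that $\Phi^{-1}$ is continuous even where critical values collide on $\Sb$ and the combinatorial type of the partition jumps; this is where Thurston et al.\ do real work, and your appeal to Ahlfors--Bers is a placeholder, not an argument. For the retraction, the critical-value map on monic centered degree-$d$ polynomials is a finite branched cover onto the space of unordered $(d-1)$-tuples in $\mathbb{C}$, the radial flow preserves coincidence patterns of critical values, and unique path-lifting on the open stratum does extend --- but continuity of the lifted homotopy across the discriminant requires the finiteness/properness of this map and is not a routine local-normal-form check. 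If your intention is to cite \cite{thurston20}, as the paper does, your summary is a fair account of why the theorem is true; if your intention is to reprove it, these two continuity arguments are genuine gaps.
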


\begin{cor}[Theorem~\ref{mainthm:dual-braid-spine}]\label{cor:dual-braid-spine}
  The dual braid complex $K_\delta$ is homeomorphic to the subspace
  $\poly_d^{mc}(\Sb)$ of polynomials with critical values on the unit
  circle, and there is a deformation retraction from
  $\poly_d^{mc}(\mathbb{C}^*)$ to $\poly_d^{mc}(\Sb)$.
\end{cor}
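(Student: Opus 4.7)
The plan is to obtain the corollary as an immediate chain of homeomorphisms, with Thurston's theorem supplying the only nontrivial topological input. The strategy is to identify $K_\delta$ as a subspace of $\NCS{d}{}$ via the main results of Sections~\ref{sec:top-posets} and \ref{sec:cont-nc-part}, and then transport the deformation retraction from Theorem~\ref{thm:thurston-def-ret} back to $K_\delta$.

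First I would invoke Theorem~\ref{thm:max-elements} (or its specialization Corollary~\ref{cor:dual-complex}) to identify the interval complex $K_\delta$ with the subspace of maximal elements of $\Fc(\sym_d,\delta,\Sb)$. Next, I would apply the isomorphism $\Psi \colon \Fc(\sym_d,\delta,\Sb) \to \NCS{d}{}$ constructed in Theorem~\ref{thm:degree-d-invariant-isom}. Since $\Psi$ is a poset isomorphism, it sends maximal elements to maximal elements, so its restriction gives a bijection between the subspace of maximal elements of $\Fc(\sym_d,\delta,\Sb)$ (which carries the orthoscheme metric inherited from the dual braid complex) and the subspace of maximal elements of $\NCS{d}{}$. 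One needs to check that this bijection is a homeomorphism for the topology on $\NCS{d}{}$ used in Theorem~\ref{thm:thurston-def-ret}; this follows from the explicit coordinate description of $\Psi$ in terms of the marked points $0 < s_1 < \cdots < s_k < 1$ and their associated permutations, since both topologies agree on the cells of maximal factorizations. The remark following Figure~\ref{fig:dual-braid-complex} records that the orthoscheme metric and Thurston's equal-length edge metric on the maximal elements of $\NCS{d}{}$ are homeomorphic, so no ambiguity remains.

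Composing these identifications produces a homeomorphism $K_\delta \cong \poly_d^{mc}(\Sb)$ via the first half of Theorem~\ref{thm:thurston-def-ret}. The deformation retraction from $\poly_d^{mc}(\mathbb{C}^*)$ to $\poly_d^{mc}(\Sb)$ then transfers verbatim, giving the second assertion of the corollary.

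The one point requiring care is the verification that the bijection between the two topologies on the set of maximal degree-$d$ noncrossing partitions actually produces a homeomorphism; this is the main obstacle but is purely a bookkeeping exercise, because both topologies are built cell-by-cell from the barycentric coordinates $(s_1,\ldots,s_k)$ of the marked points on $\Sb$ together with the discrete data of the permutations $(x_1,\ldots,x_k)$ or equivalently the block partitions $(\pi_1,\ldots,\pi_k)$, and under $\Psi$ these coordinates correspond directly. Once that identification is in place, the corollary follows by concatenating the homeomorphisms and quoting Thurston's deformation retraction.
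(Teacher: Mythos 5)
Your proposal is correct and follows essentially the same route as the paper: the chain $K_\delta \cong$ (maximal elements of $\Fc(\sym_d,\delta,\Sb)$) $\cong$ (maximal elements of $\NCS{d}{}$) $\cong \poly_d^{mc}(\Sb)$ via Theorems~\ref{thm:max-elements} and~\ref{thm:degree-d-invariant-isom}, followed by quoting Thurston's deformation retraction. The compatibility of the orthoscheme metric with Thurston's equal-edge-length metric, which you flag as the point requiring care, is exactly the issue the paper disposes of in the remark preceding Theorem~\ref{thm:thurston-def-ret}.
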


\begin{proof}
    By Theorem~\ref{thm:thurston-def-ret},
    $\poly_d^{mc}(\mathbb{C}^*)$ deformation retracts to
    $\poly_d^{mc}(\Sb)$, which is homeomorphic to the maximal elements
    of $\NCS{d}{}$, which in turn is homeomorphic to the dual braid
    complex $K_\delta$ by Theorems~\ref{thm:max-elements}
    and~\ref{thm:degree-d-invariant-isom}.
\end{proof}

\bibliographystyle{alpha}
\bibliography{cnc}

\end{document}